\documentclass{amsart}

\usepackage{url, xcolor, tikz-cd, enumerate}
\usepackage{comment}
\usepackage{amssymb}

\newtheorem{theorem}{Theorem}[section]
\newtheorem{proposition}[theorem]{Proposition}
\newtheorem{lemma}[theorem]{Lemma}

\newtheorem{fact}[theorem]{Fact}

\theoremstyle{definition}
\newtheorem{definition}[theorem]{Definition}

\theoremstyle{remark}
\newtheorem{remark}[theorem]{Remark}
\newtheorem{example}[theorem]{Example}

\newtheorem{notation}[theorem]{Notation}

\numberwithin{equation}{section}

\def\Ind{\setbox0=\hbox{$x$}\kern\wd0\hbox to 0pt{\hss$\mid$\hss} \lower.9\ht0\hbox to 0pt{\hss$\smile$\hss}\kern\wd0} 

\def\Notind{\setbox0=\hbox{$x$}\kern\wd0\hbox to 0pt{\mathchardef \nn=12854\hss$\nn$\kern1.4\wd0\hss}\hbox to 0pt{\hss$\mid$\hss}\lower.9\ht0 \hbox to 0pt{\hss$\smile$\hss}\kern\wd0}

\def \d {\delta}
\def \DD {\mathcal D}
\def \D {\Delta}

\def \U {\mathcal U}
\def \dcl {dcl}
\def \acl {acl}

\def \tp {tp}
\def \defin {\operatorname{def}}
\def \HH {\operatorname{H}}
\def \ZZ {\operatorname{Z}}
\def \diff {\operatorname{diff}}
\def \alg {\operatorname{alg}}

\def \l {\langle}
\def \r {\rangle}

\def \s {\sigma}

\def \Aut {\text{Aut}}
\def \DCF {\operatorname{DCF}}
\def \ACF {\operatorname{ACF}}

\title{Parameterized D-torsors in differential Galois theory}

\author{Omar Le\'on S\'anchez}
\address{Omar Le\'on S\'anchez\\ University of Manchester\\ School of Mathematics\\ Oxford Road \\ Manchester, M13 9PL.} 
\email{omar.sanchez@manchester.ac.uk}
\thanks{Omar Le\'on S\'anchez was partially supported by EPSRC grant EP/V03619X/1}

\author{David Meretzky}
\address{David Meretzky\\ Universidad de Los Andes\\ Departamento de Matematicas\\ Carrera 1 \#18A - 12\\
Edificio H \\ Bogotá - Colombia 111711} 
\email{d.meretzky@uniandes.edu.co}
\date{\today}
\subjclass[2010]{03C60, 03C98, 12H05}
\keywords{differential fields, strongly normal extensions, model theory, Galois cohomology}

\begin{document}

\begin{abstract}
In the context of differential fields of characteristic zero with several commuting derivations, we discuss the notion of $\#$-differential equations on parameterized D-torsors and their associated Galois extensions. Using model-theoretic methods, we observe that any generalized strongly normal extension (in the sense of Pillay \cite{Pillay1} and, more generally, Le\'on S\'anchez \cite{LS1}) is the Galois extension of a parameterized D-torsor. 

%\textcolor{red}{Does this come before now?} 
Furthermore, we prove a parameterized version of a theorem of Kolchin on differential cohomology, itself of independent interest, and use it to provide a necessary and sufficient cohomological condition for when a generalized strongly normal extension is the Galois extension for a log-differential equation on its Galois group (as a parameterized D-group). We also present general model-theoretic versions of some of the main results. %\textcolor{red}{Should we move section 2 some model theoretic generalities to the end of the paper? Do we use the results of section 2 later?}

%{\color{blue} OLS: I moved the parameterized version of Kolchin's cohomology theorem to section 4 and then used this to write the main statement in section 5 in a cleaner format, with just one map in cohomology (I explained the two possible maps in section 4). We do use the model theoretic generalities in the other sections, so the model-theory should stay in section 2.}
\end{abstract}

\maketitle

\tableofcontents

\section{Introduction}

In classical Galois theory, a (finite) Galois extension of fields $L/K$ is commonly defined as a finite separable extension which is normal. It is then proven, see \cite[\S4.3]{Hungerford} for instance, that this is equivalent to asking that $L$ be the splitting field of a separable polynomial over $K$. This equivalence, together with the Galois correspondence, are fundamental results that play a key role in Galois' theorem on radical solvability of polynomial equations.

In the context of differential fields and differential equations (in characteristic zero and commuting derivations), several ``Galois theories" have been developed; some suitable for studying linear PDEs \cite{CaSi,GiGoOv,PuSi} usually referred as the (parameterized) Picard-Vessiot theory; and some others being far-reaching generalizations of the PV theory to include extensions that enjoy solely 
%\textcolor{red}{Sounds better to me to remove ``of" in ``of a key property" and replace the first dashes -- with commas and the second dash with starting a new sentence} 
a key property: some form of (strong) normality, see for example \cite{Kolchin1,Landesman,LSPillay,Pillay1}. The generalized strongly normal theory, as it appears in \cite[\S3.5]{LSPillay}, has the most general form (to the authors' knowledge); in the sense that the differential extensions with full Galois correspondences considered in all other ``differential Galois theories" are examples of generalized strongly normal extensions. 

Briefly, working inside an ambient differentially closed field $(\U,\Pi)\models \DCF_{0,m}$, a differential field extension $L/K$ is a generalized strongly normal extension if $L$ is finitely (differentially) generated over $K$ and there is a $K$-definable set $X$ in the structure $(\U,\Pi)$ such that (i) for all $\sigma\in\Aut_\Pi(\U/K)$ we have $\sigma(L)\subset L\langle X\rangle$, this is called \emph{strong normality} of $L$; and (ii) $K\langle X\rangle\cap L=K$, one could call this the \emph{no new constants} condition (aka \emph{weak orthogonality}). Here $L\langle X\rangle$ denotes the differential field generated by all the entries of $X=X(\U)$ over $L$, similarly for $K\langle X\rangle$.

The purpose of this paper is to establish a suitable analogue of the fundamental result in (algebraic) Galois theory mentioned above to the context of generalized strongly normal extensions. In rough terms, the main result of the paper in Section~\ref{sec:mainresult} shows that any such extension is the ``splitting differential field" of 
%\textcolor{red}{``the sharp equation" instead of ``a sharp equation"?} 
the $\#$-differential equation on a parameterized D-torsor. We will define these notions rigorously in Section~\ref{sec:defDvar}.  

Instances of our results were shown by Kolchin in the Picard-Vessiot and strongly normal setting in \cite{Kolchin1} and proved in the finite-dimensional (aka $\Pi$-type zero) generalized strongly normal setting by Pillay under additional assumptions in \cite{Pillay1,Pillay2}. In the infinite-dimensional case, versions of our results appear (under similar additional assumptions) in work of the first author \cite{LS1}. The arguments in these papers use Galois-cohomological machinery to prove what Kolchin and Pillay refer to as ``G-primitive" style results. In Section \ref{sec:modPrelims}, we collect from the literature the model-theoretic generalizations of the cohomological results necessary for both our main theorem and a model-theoretic generalization.

%{\bf I need to rewrite the rest of the introduction. Perhaps as follows: First introduce D-varieties, then mention that this category admits products and so makes sense to talk about D-groups and D-torsors (i.e., finite rank/dimensional objects in DCF$_{0,m}$). Give some details and then talk about the log derivative on a D-torsor $(T,t)$ and D-points $(T,t)^\#$. Then define log equations and their Galois extensions, and correspondence. Then mention an infinite dimensional example and introduce the parametric version of things, and then state the main result as Theorem A. Then comment that certain special cases do appear in the literature. At the end describe what we do in each section.}

%\
We now describe in more detail our results and their place within the scope of the literature mentioned above. 
%We fix a ground differential field $(K,\D)$ which is small in our ambient $(\U,\D)$. 
Recall that an algebraic D-group over $K$ is an algebraic group $G$ equipped with a regular section $t$ of the prolongation (or twisted tangent bundle) $\tau_\Pi G$, both over $K$, such that $t:G\to\tau_\Pi G$ is a group homomorphism. This induces a differential $K$-algebra structure on the structure sheaf $\mathcal O_G$ and we further require that these derivations commute. The $\#$-differential equation on $(G,t)$ is defined as
$$\nabla_\Pi(x)=t(x)$$
where $\nabla_\Pi(x)=(x,D_1 x,\dots,D_m x)$ with $\Pi=\{D_1,\dots,D_m\}$, and the set of $\#$-points of $(G,t)$, denoted $(G,t)^\#$, is the set of solutions in $G$ of the $\#$-differential equation. Note that $(G,t)^\#$ is a definable group in $(\U,\Pi)$ or equivalently a $\Pi$-algebraic group; moreover, it is finite-dimensional in the sense that its $\Pi$-type is zero (in other words, the differential field generated by a generic point has finite transcendence degree over $K$). Let $e$ be the identity of $G$ and $\tau_\Pi G_e$ the fibre of $\tau_\Pi G$ over $e$ (this can be identified with the $m$-fold product of the Lie algebra of $G$). For $\alpha$ a $K$-point in $\tau_\Pi G_e$,  the $\alpha$-log differential equation on $(G,t)$ is defined as
\begin{equation}\label{logeq}
\nabla_\Pi(x)=\alpha\cdot t(x)
\end{equation}
where the product $\alpha$ with $t(x)$ is computed in the algebraic group $\tau_\Pi G$.

%The logarithmic derivative $\ell_s:G\to \tau_\Pi G_e$ of $(G,s)$ is then defined as
%$$\ell_s(g):=\nabla_\Pi(g)\cdot (s(g))^{-1}$$
 %and the product and inverse are computed in $\tau_\D G$. The kernel of $\ell_s$ is denoted by $G^\#$; note that this is a finite-dimensional differential algebraic group definable in the structure $(\U,\D)$. A logarithmic differential equation on $(G,s)$ is an equation of the form
%\begin{equation}\label{logeq}
%\ell_s(y)=a, \quad \text{ where $y$ ranges in $G$}
%\end{equation}
%and $a$ is a $K$-point of $\tau_\D G_e$. 
By a (differential) Galois extension over $K$ for the $\alpha$-log differential equation~\eqref{logeq} we mean a differential field $L$ of the form $K\langle a\rangle$ for some solution $a$ of the equation such that $K\langle (G,t)^\#\rangle\cap L=K$, where recall that $K\langle (G,t)^\# \rangle$ denotes the differential field generated by all the entries of $(G,t)^\#(\U)$ over $K$.

One can check that if $a$ is a solution of \eqref{logeq}, then the set of solutions in $\U$ is the coset $a\cdot (G,t)^\#$. Thus, if $L=K\langle a\rangle$ is a Galois extension for the $\alpha$-log equation, one can think of $L\langle (G,t)^\#\rangle$ as a ``splitting field" for \eqref{logeq}. Moreover, it follows from the definitions that $L$ is in fact a generalized strongly normal extension with $X=(G,t)^\#$. It also follows from \cite[\S3]{LSPillay} that the Galois group 
\begin{equation}\label{Galgroup}
\Aut_\Pi(L\langle (G,t)^\#\rangle/K\langle (G,t)^\#\rangle),
\end{equation}
together with its regular action on $tp(a/K)$, is isomorphic via $\sigma\mapsto a^{-1}\cdot\sigma(a)$ to $H^\#$ for some algebraic D-subgroup $H$ of $(G,t)$ over $K$, and there is a natural Galois correspondence between intermediate differential fields and algebraic $D$-subgroups of $H$ defined over $K$.

Differential Galois extensions of log-differential equations (as above) are examples of $(G,t)^\#$-strongly normal extensions where we may assume that $(G,t)^\#$ is the Galois group \eqref{Galgroup} of the extension. One can now ask whether all such normal extensions arise this way. In the current set up, there are a few reasons why this cannot be the case. For instance, the Galois extensions of equations such as \eqref{logeq} have finite transcendence degree over $K$ (since $(G,t)^\#$ is finite-dimensional); whereas there are generalized strongly normal extensions of infinite transcedence degree as long as $m\geq 2$ (i.e., there are at least two derivations), see \cite{CaSi,Landesman} or \cite[Example~6.3]{LS1}.

Galois extensions of logarithmic differential equations of infinite transcendence appear in \cite{CaSi,Landesman} and more generally in \cite{LS1}. Their construction requires a parameterized version of D-groups. One fixes a partition of the set of derivations $\Pi=\DD\cup\D$, with $\DD$ nonempty, and works relative to the theory of $\D$-fields; that is, one replaces algebraic geometry with $\D$-algebraic geometry. A parameterized D-group w.r.t. $\DD/\D$ is then a $\D$-algebraic group $G$ with a $\D$-regular section $t$ of the parameterized prolongation $\tau_{\DD/\D}G$ (further details are given in Section \ref{sec:defDvar}). Much of what we explained above translates to the parameterized setup. However, now the set of sharp points $(G,t)^\#$ is defined by the equation
$$\nabla_\DD(x)=t(x)$$
where $\nabla_\DD(x)=(x,D_1x,\dots,D_rx)$ with $\DD=\{D_1,\dots,D_r\}$, and hence it is in general an infinite-dimensional $\Pi$-group (in other words, of positive $\Pi$-type). Similarly, for $\alpha$ a $K$-point of $\tau_{\DD/\D}G_e$, a differential Galois extension $L/K$ for the $\alpha$-log differential equation $\nabla_\DD(x)=\alpha\cdot t(x)$ is generally of infinite transcendence degree over $K$. These Galois extensions are examples of $(G,t)^\#$-strongly normal extensions (where we may assume that $(G,t)^\#$ is the Galois group of the extension) and were studied in \cite{LS1}. One can now reformulate the question above to: Do all generalized strongly normal extensions arise as differential Galois extensions for log-differential equations on their (parameterized) Galois group $(G,t)$? The close connection between differential-algebraic torsors and differential Galois extensions tells us that the answer is still \emph{no}.

%Turns out the answer is still \emph{no}, and the reason for this is the strong connection between differential-algebraic torsors and differential Galois extensions.

%in particular, the parameterized logarithmic derivative $\ell_s$ is defined as 
%$$\ell(g)=(g,\d_{x_1}g,\cdots,\d_{x_r}g)\cdot (s(g))^{-1}$$
%and its kernel $G^\#$ is not necessarily finite dimensional (in fact its $\D$-type equals the $\D_t$-type of $G$). Differential Galois extensions generated by solutions to parameterized logarithmic equations are again examples of generalized strongly normal extensions.

%I need to mention that Anand approach was finite dimensional; but we need to consider parameterized objects. 
%{\bf Kolchin does not seem to have an example in DAAG (1973) showing that in the theorem one needs D-torsors.}

%However, even allowing solutions to logarithmic differential equations on parametric D-groups, may fail to generate all generalized strongly normal extensions over a given differential field. This is due to the close connection between differential algebraic torsors and differential Galois extensions.

Briefly, a parameterized D-torsor $(V,s)$ for $(G,t)$ is (right) $\D$-torsor $V$ for $G$ such that $s$ is compatible with the action (see Section~\ref{sec:defDvar} for precise definitions). For notation sake, we sometimes say that $(V,s)\circlearrowright (G,t)$ is a parameterized $\DD/\D$-torsor. The $\#$-differential equation of $(V,s)$ is simply $\nabla_\DD(x)=s(x)$. As we suggested above, our main result (Theorem~\ref{thm:mainresult}) will say that (up to a $GL_m(\mathbb Q)$-transformation of $\Pi$) any generalized strongly normal extension is the Galois extension of the $\#$-differential equation on a parameterized D-torsor $(V,s)$ for the Galois group $(G,t)$ of the extension. Furthermore, we show that such extension is the Galois extension of a log-differential equation on $(G,t)$ if and only if $V$ is a trivial torsor (i.e., $V$ has a $K$-point).

\begin{remark}\label{rem:example}
    The above suggests that the existence of nontrivial torsors implies the existence of generalized strongly normal extensions which are \emph{not} differential Galois extension for log-equations on their Galois group. This is in fact the case and can be seen even in the one derivation (ordinary) case using PV-extensions. Indeed, in \cite{JuanLedet}, they provide, for each $n\geq 3$, a PV-extension with Galois group $SO_n$ which is the function field of a nontrivial $SO_n$-torsor \footnote{We are thankful to Michael Singer for pointing out this example.}.
    \end{remark}

It is not clear to us whether or not Kolchin was aware of an example of the phenomenon in the above remark. However, the last theorem in Kolchin's book~\cite{Kolchin1} gives an associated positive result. This result characterizes strongly normal extensions, in our terminology these are precisely $\U^\Pi$-strongly normal extension with $\U^\Pi$ the field of $\Pi$-constants of $\U$. Kolchin's characterization, as proved in \cite{Kolchin1}, uses what he there calls Differential Galois Cohomology, which he later calls Kovacic Cohomology in \cite{Kolchin2}. We now restate Kolchin's theorem in our language and notation as it follows in this form from our main theorem.

%Despite the (negative) example in the above remark, the last theorem in Kolchin's book \cite{Kolchin1} gives a positive result. This result characterizes strongly normal extensions, in our terminology these are precisely $\U^\Pi$-strongly normal extension with $\U^\Pi$ the field of $\Pi$-constants of $\U$. 
%i.e., strongly normal extensions with Galois group embedding in a fixed algebraic group $G$ defined over the constants $C_K$. It will be a special case of our main theorem.
%Kolchin's characterization has two parts. Firstly, a point on a torsor for such a group $G$ generates a $G$-strongly normal extension exactly when it introduces no new constants. Secondly, all such $G$-extensions can be generated by a solution to a logarithmic differential equation on a torsor for $G$. 
%Kolchin's characterization, as proved in \cite{Kolchin1}, uses what he there calls Differential Galois Cohomology, but he then calls this Kovacic Cohomology in \cite{Kolchin2}.  %In the last section we will comment on Kolchin's proof and it's relation to ours here. We now restate Kolchin's theorem in our language and notation as it follows in this form from our main theorem.

\begin{theorem}\cite[Theorem 9, Chapter 6]{Kolchin1}\label{them:Vprim} Let $\U^\Pi$ and $K^\Pi$ denote the $\Pi$-constants of $\U$ and $K$, respectively.
    %Let $K$ be a differential field with constant field $C_K$. Let $G$ be a connected algebraic group defined over $C_K$. Then 
    \begin{enumerate}[a)]
        \item Let $V$ be an algebraic (right) torsor over $K$ for a connected algebraic group $G$ over $K^\Pi$. A solution $a$ to a logarithmic differential equation on $V$ over $K$ generates a strongly normal extension $L=K\langle a \rangle$ of $K$ precisely when $L^\Pi=K^\Pi$ (equivalently $L \cap \U^\Pi=K^\Pi$). 
%\textcolor{red}{Are $K^{\Pi}$ and $C_K$ the same in the statement here? I think we should use one of the two conventions.}
        \item Let $L$ be a strongly normal extension of $K$ with Galois group $G(\U^\Pi)$ with $G$ a connected algebraic group over $K^\Pi$. Then, there is an $a\in L$ generating $L$ over $K$ such that $a$ is the solution to a logarithmic differential equation on an algebraic torsor $V$ over $K$ for $G$. Such an $a$ is called a $V$-primitive element by Kolchin. Moreover, if $\HH^1(K,G)$ is trivial, one can then take $V=G$ and $a$ is called a $G$-primitive.
        %and call $a$ a $G$-primitive.
    \end{enumerate}
\end{theorem}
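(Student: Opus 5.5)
We prove both parts inside $(\U,\Pi)\models\DCF_{0,m}$, viewing $G$ over $K^\Pi$ as an algebraic D-group with the trivial section $t=0$, so that $(G,0)^\#=G(\U^\Pi)$. For a torsor $V$ over $K$ for $G$, a D-structure $s$ compatible with $(G,0)$ gives the logarithmic differential equation $\nabla_\Pi(x)=s(x)$ on $V$. The plan is to treat a) by a direct check of the two defining conditions of a generalized strongly normal extension with $X=\U^\Pi$. Part b) will then be the special case $\DD=\Pi$, $\D=\emptyset$ of the main theorem. The main theorem itself will come from the model-theoretic results of Section~\ref{sec:modPrelims} and the parameterized Kolchin cohomology theorem of Section 4.

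\textbf{Part a).} Let $a\in V$ be a solution and $L=K\langle a\rangle$. If $\sigma\in\Aut_\Pi(\U/K)$, then $\sigma(a)$ is another solution, and compatibility of $s$ with the action gives $\sigma(a)=a\cdot g$ with $g\in(G,0)^\#=G(\U^\Pi)$. Hence $\sigma(L)\subseteq L\langle\U^\Pi\rangle$, which is strong normality. It remains to handle the no-new-constants condition $K\langle \U^\Pi\rangle\cap L=K$. Since $K\langle\U^\Pi\rangle\cap L$ is differentially generated over $K$ by constants, the standard linear-disjointness argument for constants shows this condition is equivalent to $L^\Pi=K^\Pi$. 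Conversely, a strongly normal extension has no new constants by definition, so both directions of a) follow.

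\textbf{Part b).} Suppose $L$ is strongly normal with Galois group $G(\U^\Pi)$. Then it is $\U^\Pi$-strongly normal. The main theorem (Theorem~\ref{thm:mainresult}) yields an $a\in L$ with $L=K\langle a\rangle$, where $a$ is a $\#$-point of a parameterized D-torsor $(V,s)\circlearrowright(G,t)$ for the Galois group. With $\D=\emptyset$, this is an algebraic torsor with a D-structure, and $(G,t)^\#=G(\U^\Pi)$.

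The first issue is matching $(G,t)$ with the trivial D-structure $(G,0)$. Since $G$ is over $K^\Pi$ and $(G,t)^\#$ is Zariski dense with $\#$-points in the constants, $(G,t)$ is isomorphic over $K$ to $(G,0)$, so we can transport $(V,s)$ along this isomorphism. This step, together with the $GL_m(\mathbb Q)$-change of derivations, which is unnecessary here because every $D_i$ already annihilates $G(\U^\Pi)$, is the delicate part. I would try to handle it by appealing to the uniqueness of the D-structure whose $\#$-points are $G(\U^\Pi)$.

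For the moreover clause, the main theorem's cohomological criterion says $L$ is Galois for a log-equation on $(G,t)$ iff $V$ has a $K$-point. When $\HH^1(K,G)$ is trivial, the torsor $V$ is trivial, so we can take $V=G$, and $a$ translated by the $K$-point becomes a $G$-primitive.
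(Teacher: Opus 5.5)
Your proposal is correct and follows the route the paper intends. The paper presents this theorem as the special case $\DD=\Pi$, $\D=\emptyset$, $t=0$ of its main theorem: part a) is the strong-normality computation behind part (1), and part b) comes from part (2) together with the cohomological criterion, where the class of $(V,s)^\#$ is sent to $[V]$ in $\HH^1_{\alg}(K,G)$. That is exactly your argument.

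Two points deserve to be made explicit. First, regularity of $L/K$, a hypothesis of the main theorem, follows from connectedness of $G$ via the torsor theorem. Second, no change of derivations is needed simply because $L/K$ has $\Pi$-type $0$, so $\D=\emptyset$ witnesses it.
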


%\begin{remark}
 %   In the linear (Picard-Vessiot) case even more can be said, namely, fixing a linear algebraic group $G$ over the constants, should there exist one strongly normal (by the linearity of $G$, it will be Picard-Vessiot) extension with Galois group $G$, then the set of $G$-strongly normal extensions is exactly in bijection with $\HH^1_{\text{alg}}(C_K,G)$. Putting this into a general model-theoretic context suitable for the present setting of $DCF_{0,m}$ is part of forthcoming work by the second author.
%\end{remark}

In \cite[Proposition 3.4]{Pillay1} and \cite[Remark 3.8]{Pillay2}, Pillay partially generalized the above theorem of Kolchin to the finite-dimensional generalized strongly normal setting. 
%It is maybe worth mentioning that the category of geometric objects considered in \cite{Pillay1} is the model-theoretic definable category which coincides with the differential algebraic category of Kolchin when working with groups and torsors. 
%In particular, \cite[Proposition 3.4]{Pillay1} includes additional cohomological restrictions. 
Again we quote it slightly adapted to our terminology. 
%The characterization references the model-theoretic version of ``$G$-primitive" elements:

%\begin{proposition}{\cite[Proposition 3.4]{Pillay1}} \
%Let $K$ be a differential field. Let $G$ be a $K$-definable group $K$-definably embedding in $H$, a connected algebraic group defined over $K$ (such an $H$ can be found by results of \cite{Pillay4}). Then 

%\begin{enumerate}[a)]
 %   \item A point $\alpha$ on a $K$-definable left coset of $H/G$ (a specific kind of $K$-definable torsor for $G$) generates a generalized ($G$-)strongly normal extension $L=K\langle \alpha\rangle$ of $K$ exactly when $G(L) = G(K)$.
  %  \item If $K$ is algebraically closed and $L/K$ is a generalized $G$-strongly normal extension, then $L$ is generated by a point on a $K$-definable left coset of $H/G$. 
%\end{enumerate}
 %   A point $\alpha$ on a $K$-definable left coset $H/G$ is called a $(H,G)$-primitive element.
%\end{proposition}

\begin{proposition}{\cite[Proposition 3.4]{Pillay1}} \& \cite[Remark 3.8]{Pillay2}\label{prop:Pillayresult} \
\begin{enumerate}[a)]
    \item Let $(G,t)$ be an algebraic D-group over $K$. A solution $a$ to a log-differential equation on $(G,t)$ generates a $(G,t)^\#$-strongly normal extension $L=K\langle a\rangle$ precisely when $(G,t)^\#(L)=(G,t)^\#(K)$.
    \item Let $L/K$ be a generalized strongly normal extension with Galois group of the form $(G,t)^\#$ where $(G,t)$ is an algebraic D-group over $K$ (this implies in particular that $L$ must be of finite transcendence degree over $K$). Under the additional assumption that $K$ is algebraically closed, we get that $L$ is generated over $K$ by a solution to a log-differential equation on $(G,t)$. Such solution is called a $G$-primitive.
\end{enumerate}
    %A point $\alpha$ on a $K$-definable left coset $H/G$ is called a $(H,G)$-primitive element.
\end{proposition}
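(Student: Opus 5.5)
For part a), I will fix a solution $a\in G$ of the $\alpha$-log equation $\nabla_\Pi(x)=\alpha\cdot t(x)$. The first step is the observation already made above: the solution set is the coset $a\cdot(G,t)^\#$. To see this, note that for $g\in (G,t)^\#$ the map $t$ is a homomorphism and $\nabla_\Pi$ is multiplicative into $\tau_\Pi G$. Hence
$$\nabla_\Pi(ag)=\nabla_\Pi(a)\nabla_\Pi(g)=\alpha\, t(a)\, t(g)=\alpha\, t(ag).$$
Conversely, if $b$ is another solution, then $a^{-1}b\in(G,t)^\#$.

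For any $\sigma\in\Aut_\Pi(\U/K)$, the image $\sigma(a)$ is again a solution, since $\alpha$ and $t$ are over $K$. So $\sigma(a)\in a\cdot(G,t)^\#$, which gives $\sigma(L)\subseteq L\langle (G,t)^\#\rangle$. This is strong normality.

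It remains to show that the no-new-constants condition $K\langle (G,t)^\#\rangle\cap L=K$ is equivalent to $(G,t)^\#(L)=(G,t)^\#(K)$.
\begin{itemize}
\item[($\Rightarrow$)] This direction is immediate: a point of $(G,t)^\#(L)$ has its coordinates in $K\langle (G,t)^\#\rangle\cap L=K$.
\item[($\Leftarrow$)] Here I will use the model-theoretic criterion for weak orthogonality. Since $(G,t)^\#$ is a $K$-definable group, $\tp(a/K)$ is weakly orthogonal to $(G,t)^\#$ if and only if $\tp(a/K)$ is isolated by the formula ``$x$ is a solution'' together with $\tp(a/K)$-genericity. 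Equivalently, $\Aut(\U/K\langle(G,t)^\#\rangle)$ acts transitively on realizations of $\tp(a/K)$.
\end{itemize}
Concretely, suppose some $c\in K\langle (G,t)^\#\rangle\cap L$ lies outside $K$. Write $c=f(a)$ with $f$ a $K$-definable function and $c=h(g_1,\dots,g_n)$ with $g_i\in (G,t)^\#$. Consider the stabilizer-type subgroup of $(G,t)^\#$ given by those $g$ such that $ag\models\tp(a/K\langle(G,t)^\#\rangle)$. The standard argument (as in Pillay's proof, or via the binding group results collected in Section~\ref{sec:modPrelims}) produces an $L$-definable coset of a proper $K$-definable subgroup. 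From this coset one extracts, using the group structure and elimination of imaginaries in $\DCF_{0,m}$, an element of $(G,t)^\#(L)\setminus (G,t)^\#(K)$, or of a quotient by a $K$-definable subgroup. I will then need to lift it back to $(G,t)^\#$ itself. This lifting step is the main obstacle, and I would handle it by replacing $(G,t)^\#$ with a definable quotient where necessary, quoting the generalities from Section~\ref{sec:modPrelims}.

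For part b), I will use Galois cohomology. By the results quoted from \cite[\S3]{LSPillay}, the Galois group acts regularly, so we can take any generator $b$ of $L$ over $K$. The map $\sigma\mapsto$ the unique $g_\sigma\in (G,t)^\#$ with $\sigma(b)$ corresponding to $b\cdot g_\sigma$ defines a definable cocycle. This gives a $K$-definable torsor $P$ for $(G,t)^\#$ containing a realization generating $L$. By Pillay's result that definable cohomology $\HH^1_{\defin}(K,(G,t)^\#)$ vanishes when $K$ is algebraically closed (again from Section~\ref{sec:modPrelims}), $P$ has a $K$-point $p$. Then $a:=p^{-1}\cdot b'$, for $b'\in P(L)$, lies in $G(L)$ and satisfies $\sigma(a)\in a\cdot (G,t)^\#$ for all $\sigma$. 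Setting $\alpha:=\nabla_\Pi(a)\,t(a)^{-1}$, this element is fixed by every $\sigma\in\Aut_\Pi(\U/K)$, since
$$\nabla_\Pi(ag)\,t(ag)^{-1}=\nabla_\Pi(a)\,t(a)^{-1}\quad\text{for } g\in (G,t)^\#.$$
Hence $\alpha\in\tau_\Pi G_e(K)$, and $a$ is a $G$-primitive element. The delicate point here is the vanishing of the definable $\HH^1$, which I will simply import from the cited results.
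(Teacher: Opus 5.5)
\textbf{Part b).} Your argument breaks at its key step. The vanishing result available here is Proposition~\ref{prop: alg triv}, which comes from Kolchin's theorem: $\HH^1_\Pi(K,G)$ is trivial for the \emph{algebraic} group $G$ when $K$ is algebraically closed. Nothing of the sort holds for $(G,t)^\#$.

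In fact the class of your torsor $P$ is nontrivial whenever $L\neq K$. If $P$ had a $K$-point $p$, the element of $(G,t)^\#$ carrying $p$ to $b'$ would lie in $(G,t)^\#(L)=(G,t)^\#(K)$. Then $b'\in K$ and $L=K$, and your $\alpha$ would be $e$.

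What is needed is the route of Theorem~\ref{thm: cohomological fact}(2):
\begin{enumerate}
\item Push the class of $Q$ forward along $\iota^1:\HH^1_\Pi(K,(G,t)^\#)\to\HH^1_\Pi(K,G)$, where it becomes trivial.
\item Use exactness of $(G/(G,t)^\#)(K)\xrightarrow{\delta^1}\HH^1_\Pi(K,(G,t)^\#)\xrightarrow{\iota^1}\HH^1_\Pi(K,G)$ (Fact~\ref{fact: LES}). This shows that $Q$ is $K$-definably isomorphic to a $K$-definable left coset $a\cdot(G,t)^\#$ in $G$.
\end{enumerate}
Concretely, the $G$-torsor $Q\times G/\!\sim$ of Remark~\ref{rmk: subtorsor}(3) has a $K$-point $p$. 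Take $a\in G(L)$ to be the element carrying $p$ to $[(b,e)]$; this $a$ lies in $G$, not in $(G,t)^\#$. From there, your check that $\nabla_\Pi(a)\,t(a)^{-1}$ is $\Aut_\Pi(\U/K)$-invariant is exactly the paper's final step.

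\textbf{Part a).} Your strong-normality computation is the one in Lemma~\ref{lem: part a}. In the paper nothing more is needed, because its Definition of $X$-strongly normal takes $X(L)=X(K)$ itself as the second condition.

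Your ($\Leftarrow$) direction aims at the stronger $K\langle (G,t)^\#\rangle\cap L=K$. It is not proved, since the ``lifting'' is deferred, and it cannot be proved in this generality. Here is a counterexample.
\begin{enumerate}
\item Take one derivation, $K$ an algebraically closed field of constants, and $G=\mathbb{G}_m\ltimes\mathbb{G}_a$ with $(\lambda_1,\mu_1)(\lambda_2,\mu_2)=(\lambda_1\lambda_2,\lambda_1\mu_2+\mu_1)$. Let $t$ be given by $\lambda'=0$, $\mu'=\mu/2$; its set of $\#$-points is a subgroup.
\item The point $a=(\nu,0)$ with $\nu'=\nu\neq 0$ solves the log-equation $\lambda'=\lambda$, $\mu'=\tfrac{3}{2}\mu$, whose solution set is $a\cdot(G,t)^\#$.
\item The field $L=K(\nu)$ has constant field $K$. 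It contains no nonzero $\mu$ with $\mu'=\mu/2$: otherwise $\mu^2/\nu\in K^\times$, and $\nu$ would be a square in $K(\nu)$. Hence $(G,t)^\#(L)=(G,t)^\#(K)$.
\item Yet $\nu=\epsilon^2$ with $(1,\epsilon)\in (G,t)^\#$. So $\nu\in K\langle (G,t)^\#\rangle\cap L$ while $\nu\notin K$.
\end{enumerate}
So the obstacle you flagged is real, not technical. Incidentally, the same example shows that Fact~\ref{weak orth fact} has to be read with $\dcl(A,b)\cap\dcl(A,X)=A$ in place of $X(\dcl(A,b))=X(A)$. The condition $(G,t)^\#(L)=(G,t)^\#(K)$ is therefore not by itself equivalent to weak orthogonality.
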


%\begin{remark}
 %   We note that the results of \cite{LSPillay} show that the hypothesis $G(K)=G(K^{\diff})$ in part a) of the theorem as it appears in \cite{Pillay1} is not needed. 
%\end{remark}

%In \cite{Pillay2}, the single-derivation generalized strongly normal theory is developed with reference to the category of algebraic $D$-varieties. 
We note that in the introduction of Pillay's \cite{Pillay2} a complete generalization of the above theorem of Kolchin (Theorem~\ref{them:Vprim}) to the setup of generalized strongly normal extensions is mentioned as a possible aim but is not carried out there; namely, changing part a) in Proposition~\ref{prop:Pillayresult} to  allow for arbitrary definable torsors, and changing part b) to drop the assumption that $K$ is algebraically closed. This ``possible aim" is precisely what we achieve in this paper. Let us note that the connection between these two assumptions in parts a) and b) is contained in the following cohomological proposition.
%(due to Kolchin literally part (v) of the corollary to Theorem 4 Section 3 of Chapter VII of \cite{Kolchin2}) which in \cite{Pillay1} appears prior to the previous result (again we change notation for clarity). 
Recall that $\HH^1_{\Pi}$ denotes Kolchin's constrained cohomology, or equivalently, the definable Galois cohomology in $\DCF_{0,m}$ (taken with respect to the $\Pi$-differential closure);
while $\HH^1_{\alg}$ is the usual algebraic Galois cohomology, or equivalently, definable Galois cohomology in the theory $\ACF_0$ (taken with respect to the algebraic closure). 

\begin{proposition}\cite[Proposition 3.2]{Pillay1}\label{prop: alg triv}
Let $G$ be an algebraic group over $K$. If $K$ is algebraically closed, then
$\HH^1_{\Pi}(K, G)$ is trivial.    
\end{proposition}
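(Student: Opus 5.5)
The plan is to work with the concrete description of $\HH^1_\Pi(K,G)$ as classes of $K$-definable (right) torsors for $G$ in $(\U,\Pi)$. Equivalently, we use cocycles $\Aut_\Pi(K^{\diff}/K)\to G(K^{\diff})$ that are definable, i.e.\ of the form $\sigma\mapsto a^{-1}\sigma(a)$ for some $a\in G(\U)$ with $K\langle a\rangle\subseteq K^{\diff}$. We must show that every such torsor $X$ has a $K$-point, equivalently that every such cocycle is a coboundary.

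First, fix a definable torsor $X$ over $K$ and a point $a\in X(K^{\diff})$. The map $\sigma\mapsto a^{-1}\sigma(a)$ takes values in $G(K^{\diff})$ and depends only on $\tp(a/K)$; equivalently, it gives a $K$-definable map on realizations of $p=\tp(a/K)$.

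The key step is to replace $a$ by a point whose coordinates are algebraic over $K$, using $K=K^{\alg}$. For this we use the fact that, since $G$ is an \emph{algebraic} group, the cocycle lands in $G(K^{\diff})$ and is governed by the differential field $K\langle a\rangle$, which is a differential subfield of $K^{\diff}$. Consider the algebraic variety $W$ that is the Zariski closure over $K$ of $\nabla$-prolongations of $a$. The torsor structure of $X$ induces, via the quantifier-free definable action $X\times G\to X$ (after passing to a constructible/algebraic model of $X$ by elimination of imaginaries in $\DCF_{0,m}$ and Weil--Hrushovski style group configuration, or more simply by working directly with cocycles), an algebraic $G$-torsor structure over $K$ on a suitable algebraic variety $Y$ attached to $a$. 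Since $K$ is algebraically closed, $Y$ has a $K$-rational point, because nonempty varieties over algebraically closed fields have rational points. Equivalently, $\HH^1_{\alg}(K,G)=0$ trivially because $\Aut(K^{\alg}/K)$ is trivial. Concretely, I would take the cocycle $\xi(\sigma)=a^{-1}\sigma(a)$ and look for $b\in G(K^{\diff})$ with $\xi(\sigma)=b^{-1}\sigma(b)$. I would construct $b$ by choosing a $K$-point $c$ of the algebraic locus (over $K$) of $a$ in the ambient algebraic variety containing $X$. This $c$ exists by the Nullstellensatz since $K=K^{\alg}$. I would then check that $c$ lies in $X$ up to translation by $G$, i.e.\ that $a=c\cdot g$ for some $g\in G(K^{\diff})$, whence $\xi(\sigma)=g^{-1}\sigma(g)$ is a coboundary.

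The main obstacle is exactly this last check: a $K$-point of the Zariski closure of $a$ need not lie on the differential-algebraic torsor $X$. To handle it I would use the standard reduction that definable $G$-torsors for an algebraic group $G$ embed into algebraic $G$-torsors over $K$: pushing the cocycle forward along $G(K^{\diff})\subseteq G(\U)$ and using that $G$ is algebraic, a definable torsor for $G$ is in definable bijection with an algebraic torsor $V$ over $K$. This is Kolchin's result identifying constrained cohomology of algebraic groups with a subset of algebraic cohomology over $K^{\diff}$-descent data, and it can be proved by taking $V$ to be the quotient of $G$ by the cocycle's twisted action. Then $V$ is an algebraic variety over the algebraically closed field $K$, so $V(K)\neq\emptyset$, and any $c\in V(K)$ trivializes the class.
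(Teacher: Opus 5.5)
Your final paragraph follows the paper's route. The paper gets the proposition as an immediate consequence of Kolchin's cohomology theorem (Theorem~\ref{thm: Kolchin's equivalence}), which replaces a $K$-definable torsor $X$ for $G$ by a $K$-definably isomorphic algebraic torsor $V$ over $K$. Since $K=K^{\alg}$, $V$ has a $K$-point, and pulling it back gives a point of $X$ in $\dcl(K)=K$. The middle part of your proposal (the Zariski locus of $a$, the group configuration) is not needed, and as you observe it does not work as stated. Two other statements are wrong and need fixing.

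First, your description of definable cocycles is incorrect. For $a\in G(K^{\diff})$, the map $\sigma\mapsto a^{-1}\sigma(a)$ is a coboundary by definition. With that description every class would be trivial over every $K$, which is false: by Kolchin's theorem, $\HH^1_\Pi(K,\mu_2)\cong K^{\times}/(K^{\times})^2$. A definable cocycle has the form $\sigma\mapsto h(a,\sigma(a))$, with $h$ a $K$-definable function and $a$ a tuple from $K^{\diff}$. For instance, take $a\in X(K^{\diff})$ and $h$ the difference map of $X$, as in your second paragraph.

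Second, your aside that Kolchin's result ``can be proved by taking $V$ to be the quotient of $G$ by the cocycle's twisted action'' is not a proof. In algebraic Galois cohomology, twisting followed by Galois descent yields a variety over $K$ because the cocycle lives on the Galois group of the algebraic extension $K^{\alg}/K$. Here the cocycle lives on $\Aut_\Pi(K^{\diff}/K)$, and $K^{\diff}/K$ is in general transcendental. Twisting then only returns the $\Pi$-definable torsor $Z\times G/\!\sim$ of Remark~\ref{rmk: subtorsor}(3), which is what you started with. Producing an \emph{algebraic} model over $K$ is the whole content of Kolchin's theorem.

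So you should either cite Kolchin's theorem, as the paper does, or argue as in the surjectivity part of the proof of Theorem~\ref{thm: param kolchin} with $\D=\emptyset$:
\begin{enumerate}
\item By Fact~\ref{embedding}, $X$ embeds equivariantly and $K$-definably into an algebraic torsor $W$ for an algebraic group $H\supseteq G$.
\item The image of $X$ is a single $G$-orbit, so it is the fibre of the $\ACF$-definable quotient $W\to W/G$ over a point fixed by $\Aut_\Pi(\U/K)$, that is, a $K$-point.
\item Hence the image is a nonempty $K$-constructible set, which has a $K$-point because $K$ is algebraically closed.
\end{enumerate}
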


%An immediate consequence of this cohomological fact is used implicitly in the proof of \cite[Proposition 3.4]{Pillay1} and can be shown more cleanly shown using the long exact sequence in definable Galois cohomology developed by the authors and Pillay in \cite{LSMP} and which we will recall in Section \ref{sec:modPrelims}: 

%\begin{fact}
%If $G$ is a $K$-definable group $K$-definably embedding in $H$, and if $K$ is algebraically closed, then every $K$-definable torsor for $G$ is, up to $K$-definable isomorphism, a $K$-definable left coset of $G$ in $H$. 
%\end{fact}

This proposition can be seen as an immediate consequence of the following theorem of Kolchin, which we refer as \emph{Kolchin's Cohomology Theorem}. 

%and either the model-theoretic or differential-algebraic version of Fact \ref{fact: SES}, the exact sequence in cohomology induced by a normal extension. 

\begin{theorem}\cite[\S VII.3]{Kolchin2}\label{thm: Kolchin's equivalence}
    Let $G$ be an algebraic group over $K$. Then, $$\HH^1_{\Pi}(K, G) \cong \HH^1_{\alg}(K, G)$$
\end{theorem}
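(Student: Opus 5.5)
The plan is to compare the two cohomology sets through the natural map induced by the inclusion $K^{\alg}\subseteq K^{\diff}$, where $K^{\diff}$ is the differential closure of $K$. Since $K^{\diff}$ is algebraically closed, the restriction maps $\Aut_\Pi(K^{\diff}/K)\to \Aut(K^{\alg}/K)$ are surjective. Inflation along this surjection sends an algebraic cocycle $\xi:\Aut(K^{\alg}/K)\to G(K^{\alg})$ to a $\Pi$-cocycle $\sigma\mapsto \xi(\sigma|_{K^{\alg}})$ with values in $G(K^{\alg})\subseteq G(K^{\diff})$. This cocycle is continuous, or definable in Kolchin's constrained sense, because it factors through a finite Galois quotient. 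I would first check that cohomologous cocycles go to cohomologous cocycles, so that we get a well-defined map
$$\iota:\HH^1_{\alg}(K,G)\to \HH^1_{\Pi}(K,G).$$
The goal is then to show that $\iota$ is a bijection.

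It is easiest to argue with torsors rather than cocycles. $\HH^1_{\alg}(K,G)$ classifies algebraic $K$-torsors for $G$, i.e.\ $K$-definable torsors in $\ACF_0$, up to $K$-isomorphism. $\HH^1_{\Pi}(K,G)$ classifies $K$-definable (in $\DCF_{0,m}$) torsors for $G$ up to $K$-definable isomorphism. In these terms $\iota$ sends an algebraic torsor $V$ to $V$ regarded as a differentially definable torsor.

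\emph{Injectivity.} Suppose two algebraic $K$-torsors $V,W$ are $K$-definably isomorphic in $\DCF_{0,m}$. Then the algebraic torsor $\mathrm{Iso}_G(V,W)$, itself a $K$-torsor for an inner form of $G$, has a $K^{\diff}$-point fixed by $\Aut_\Pi(K^{\diff}/K)$, hence a $K$-point. So $V\cong W$ algebraically over $K$. This step is formal.

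\emph{Surjectivity.} This is the main obstacle: we must show that every $K$-definable differential torsor $X$ for $G$ is $K$-definably isomorphic to an algebraic torsor. I would proceed as follows.
\begin{enumerate}[1.]
\item Since $X$ is a principal homogeneous space, it has a point $a\in X(K^{\diff})$, and $X$ is in definable bijection with $G$ via $g\mapsto a\cdot g$. So the torsor is determined by the cocycle $\sigma\mapsto a^{-1}\sigma(a)\in G(K^{\diff})$.
\item Put $F=K\langle a\rangle$, a finitely generated differential field. The cocycle takes values in $G$ of the Galois closure of $F$ inside $K^{\diff}$.
\item Forgetting the differential structure, I want to replace $a$ by a point $b\in G(K^{\alg})\cdot a$ such that $a^{-1}b$ is also fixed appropriately. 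Concretely, the goal is a point $b\in X(K^{\alg})$, where $X(K^{\alg})$ makes sense after taking the Zariski closure of $X$ as an algebraic $K$-torsor $V$ for $G$.
\item The Zariski closure $V$ of $X$ over $K$ is an algebraic $K$-variety with a $G$-action. Since $X$ is a single $G(\U)$-orbit and $G$ is algebraic, $V$ is an algebraic $K$-torsor and $X=V(\U)$. Hence $X$ is the image of $V$ under $\iota$.
\end{enumerate}
The delicate point is step~4: that $V$ is a torsor and $X$ is all of $V(\U)$. I would argue that for any $x\in X$, $X=x\cdot G(\U)$ and $G(\U)$ is Zariski closed, so $X$ is already Zariski closed (a translate of $G$). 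The only issue is the field of definition. The Zariski closure of a $K$-definable set is defined over $K$, and the action axioms descend from $\U$ to $K$ by the usual invariance argument. This would give surjectivity and complete the proof.
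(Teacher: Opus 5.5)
Your injectivity argument is fine. It is essentially the argument in the paper's proof of Theorem~\ref{thm: param kolchin}, which with $\D=\emptyset$ gives this statement (the paper itself just cites Kolchin here). A $G$-equivariant map between algebraic torsors is determined by the image of one point, so it is algebraic, and $\Aut_\Pi$-invariance puts it over $K$.

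\textbf{The gap: step~4 of surjectivity.} A class in $\HH^1_\Pi(K,G)$ is represented by a $K$-definable set $X\subseteq\U^n$ with a $\Pi$-definable regular action $X\times G\to X$. Nothing makes this action algebraic, and nothing places $X$ inside a variety on which $G$ acts algebraically. Your claim that $X=x\cdot G(\U)$ is ``a translate of $G$'', hence Zariski closed, silently assumes such an ambient algebraic action, and it is false in general.

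For example, take $G=\mathbb G_a$, some $\partial\in\Pi$, and $X=\{(x,\partial x):x\in\U\}\subseteq\U^2$ with $(x,y)*g=(x+g,\,y+\partial g)$. This is a $K$-definable $\Pi$-torsor for $G$. Yet $X$ is Zariski dense in $\U^2$: for a nonzero polynomial $p$, take $x$ $\Pi$-transcendental over its coefficients; then $p(x,\partial x)\neq 0$. Moreover, no morphism $\U^2\times G\to\U^2$ restricts to $*$. Otherwise the second coordinate of $(0,0)*g$, namely $\partial g$, would be a polynomial in $g$ vanishing on all constants, hence identically zero. So the Zariski closure is not a torsor, and $X\neq V(\U)$. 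This $X$ is in fact isomorphic to the trivial torsor via $(x,y)\mapsto x$, but finding such a change of coordinates is exactly what must be proved.

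\textbf{What is missing.} Your step~3 names an adequate target. Suppose you had $b\in X(L)$ with $L/K$ finite Galois, and let $h(b,c)$ be the unique $g\in G$ with $b*g=c$. Since $L$ is a differential field, hence definably closed, $\sigma\mapsto h(b,\sigma(b))$ takes values in $G(L)$ and factors through $\mathrm{Gal}(L/K)$. So the class of $X$ would come from $\HH^1_{\alg}(K,G)$. But none of your steps produces such a point, and producing it is the real content of the theorem.

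In the paper's argument this is supplied by Fact~\ref{embedding}(2). It says $X$ embeds $K$-definably and $G$-equivariantly, as a single $G$-orbit, into an algebraic torsor $W$ for an algebraic group $H\supseteq G$. Once $X$ sits inside $W$ with $G$ acting algebraically, your step-4 reasoning becomes correct. The orbit is a translate of $G$, hence Zariski closed. It is the fibre of the definable quotient $W\to W/G$ over a point fixed by $\Aut_\Pi(\U/K)$, hence over a $K$-point. Therefore $X$ is $\ACF$-definable over $K$ with the restricted algebraic action.

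That embedding result is itself nontrivial; roughly, it is where prolongations of generic points and a Weil-type chunk argument enter. Without it or a substitute, your surjectivity argument does not go through.
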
 

In the first author's paper \cite{LS1}, a generalization of  Proposition~\ref{prop:Pillayresult} to the parameterized setting was established. Recall that, in the parameterized context, we have a partition $\Pi=\DD\cup\D$. We refer to Section~\ref{sec:defDvar} to the notion of a set of derivations $\D$ bounding the $\Pi$-type of a differential field extension.
%We quote directly from \cite{LS1}. The full set of derivations is denoted $\Pi$.   The following is really two theorems from \cite{LS1} which are quote directly. In Section \ref{sec:defDvar} we explain the geometric terminology of parameterized $D$-torsors, $\D$ bounding the $\Pi$-type, etc. 

\begin{theorem}\label{thm:fromOmar}  \cite[Theorems 5.11 and 5.7]{LS1}
\begin{enumerate}[a)]
        \item  Let $(G,t)$ be a parameterized $D$-group w.r.t $\DD/\D$ defined over $K$. A solution $a$ to a log-differential equation of $(G,t)$ generates a $(G,t)^\#$-strongly normal extension precisely when $(G, t)^{\#}(L)=(G, t)^{\#}(K)$.        
        
        \item Let $L/K$ be a generalized strongly normal extension such that $\D$ bounds the $\Pi$-type of $L$ over $K$. Let $(G,t)^\#$ be the Galois group of $L/K$ where $(G,t)$ is a parameterized D-group w.r.t. $\DD/\D$. Under the additional assumption that $(K,\D)$ is $\Delta$-closed, $L$ is generated by a solution to a (parameterized) log-differential equation on $(G,t)$. 
    \end{enumerate}
\end{theorem}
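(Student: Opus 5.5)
Part a) will follow from the general model-theoretic fact that a point $a$ whose solution set is a coset of a definable group generates a strongly normal extension exactly when no new points of that group appear. Part b) will follow by producing a parameterized D-torsor for $(G,t)$ and then trivialising it with the closedness hypothesis.

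\emph{Part a).} Let $a$ solve $\nabla_\DD(x)=\alpha\cdot t(x)$ and put $L=K\langle a\rangle$. The solution set is the coset $a\cdot(G,t)^\#$, so every $\sigma\in\Aut_\Pi(\U/K)$ satisfies $\sigma(a)=a\cdot g_\sigma$ with $g_\sigma\in(G,t)^\#$. Hence $\sigma(L)\subset L\langle (G,t)^\#\rangle$, which is strong normality. It remains to show that the no-new-constants condition $K\langle (G,t)^\#\rangle\cap L=K$ is equivalent to $(G,t)^\#(L)=(G,t)^\#(K)$.
\begin{enumerate}[(i)]
\item The forward implication is immediate, since $(G,t)^\#(L)\subset K\langle (G,t)^\#\rangle\cap L$.
\item For the converse, assume $(G,t)^\#(L)=(G,t)^\#(K)$. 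I would use the standard argument via the stabiliser.
\item Show that $\tp(a/K\langle(G,t)^\#\rangle)$ is isolated, or at least definable, by a coset of a $K$-definable subgroup $H\le (G,t)^\#$. This uses stability of $\DCF_{0,m}$.
\item Show that any element $c\in L\cap K\langle (G,t)^\#\rangle$ is fixed by the Galois action, using a Galois correspondence argument as in \cite[\S3]{LSPillay}.
\end{enumerate}
Part (iv) is where the hypothesis $(G,t)^\#(L)=(G,t)^\#(K)$ enters: it rules out $K$-definable quotients of $(G,t)^\#$ acquiring new $L$-points. The underlying reason is elimination of imaginaries, which makes the relevant quotients definable sets whose points over $L$ come from $(G,t)^\#(L)$.

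\emph{Part b).} Fix $a$ with $L=K\langle a\rangle$ and let $p=\tp(a/K)$. The Galois group $(G,t)^\#$ acts regularly on the realisations of $p$ in $L\langle(G,t)^\#\rangle$.
\begin{enumerate}[(i)]
\item Define a cocycle by choosing $\sigma\mapsto a^{-1}\sigma(a)$ from $\Aut_\Pi(K^{\diff}/K)$ into $(G,t)^\#$. Here I take a realisation $a$ of $p$ in the differential closure $K^{\diff}$, which exists by isolation of types over $K$ in $\DCF_{0,m}$.
\item This gives a class in $\HH^1_\Pi(K,(G,t)^\#)$, and its image lies in $\HH^1_\Pi(K,G)$.
\item The main step is to show that $\HH^1_\Pi(K,G)$ is trivial when $(K,\D)$ is $\D$-closed. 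This is the parameterized analogue of Proposition~\ref{prop: alg triv}.
\item For step (iii), I would first show that the $\Pi$-definable torsor is $\Pi$-definably isomorphic to a $\D$-definable torsor. This uses that $\D$ bounds the $\Pi$-type, which lets the relevant data be described by $\D$-definable sets.
\item Then use that $\D$-closedness gives $\D$-definable torsors a $K$-point, which trivialises the class.
\end{enumerate}
Once the image is trivial, there is $h\in G(K)$ with $\sigma(ha)=ha\cdot g'_\sigma$ for $g'_\sigma\in (G,t)^\#$. Replacing $a$ by $b=ha$, the element $b$ satisfies
$$\nabla_\DD(b)\cdot t(b)^{-1}\in \tau_{\DD/\D}G_e(\U),$$
and this quantity is $\Aut(\U/K)$-invariant, hence lies in $K$. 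So $b$ solves an $\alpha$-log equation over $K$ and generates $L$.

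The main obstacle is step (iv) of part b), the descent from $\Pi$-definable torsors to $\D$-torsors. I would attempt it with a parameterized Kolchin cohomology comparison of the form
$$\HH^1_\Pi(K,G)\cong\HH^1_\D(K,G)$$
for $\D$-algebraic groups $G$. This would be proved via the long exact sequence applied to $(G,t)^\#\le G$.
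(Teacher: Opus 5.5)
\textbf{Part b): the descent step is not supplied.} Your part b) follows the same route as the paper. You take the class of $Q\circlearrowright(G,t)^\#$, push it by $\iota^1$ into $\HH^1_\Pi(K,G)$, and kill it using $\D$-closedness. That gives a $K$-definable coset on which $\nabla_\DD(x)\,t(x)^{-1}$ is a $K$-point $\alpha$. However, the one non-formal step is missing.

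\begin{itemize}
\item Your justification in (iv), that $\D$ bounds the $\Pi$-type, applies to $(G,t)^\#$ and $Q$. It does not apply to $\Pi$-torsors for the $\D$-group $G$: for infinite $G$ the $\DD$-derivatives of a generic point are unconstrained, so $\D$ does not bound their $\Pi$-type.
\item The tool you name for $\HH^1_\Pi(K,G)\cong\HH^1_\D(K,G)$, the long exact sequence for $(G,t)^\#\le G$, lives entirely inside $\Pi$-definable cohomology. It identifies $\ker\iota^1$ with classes of $K$-definable cosets. It says nothing about whether a $\Pi$-torsor for $G$ is $\D$-definable, which is exactly what is needed.
\end{itemize}

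The missing idea, from the paper's proof of Theorem~\ref{thm: param kolchin}, is Pillay's embedding (Fact~\ref{embedding}). A $\Pi$-torsor $U$ for $G$ over $K$ embeds $K$-definably and equivariantly into an algebraic torsor $W$ for an algebraic group containing $G$. Since $G$ is $\D$-definable and the action is algebraic, the $G$-orbit relation on $W$ is $\D$-definable. So $U$ is the fibre of the $\D$-definable quotient $W\to W/G$ over a $K$-point, hence $\D$-definable over $K$, and $U(K)\neq\emptyset$ when $(K,\D)$ is $\D$-closed.

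A route closer to your hint also works. Use Theorem~\ref{thm: equiv of cats} to write $Q\cong(V,s)^\#$ with $V$ a $\D$-torsor for $G$ over $K$. Remark~\ref{rmk: subtorsor}(4) then gives $\iota^1[Q]=[V]_\Pi$, and $\D$-closedness gives $V(K)\neq\emptyset$.

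Two smaller fixes:
\begin{itemize}
\item $p$ is isolated by the torsor theorem (Fact~\ref{fact: torsor theorem}), not because types over $K$ are isolated in $\DCF_{0,m}$; in general they are not.
\item Triviality is witnessed by some $c\in G(K^{\diff})$, not by an element of $G(K)$, with $\sigma(c)=c\cdot a^{-1}\sigma(a)$. Then $a\cdot x\mapsto c\cdot x$ is a $K$-definable isomorphism of $Q$ onto the coset $c\,(G,t)^\#$. After that, your last step is the paper's.
\end{itemize}

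\textbf{Part a).} Your first paragraph (the solutions form $a\cdot(G,t)^\#$, hence strong normality) is all the paper uses, via Lemma~\ref{lem: part a} with $V=G$ and $s=\alpha\cdot t$. There, condition (2) of $X$-strong normality is literally $X(L)=X(K)$, and its identification with weak orthogonality is taken from Fact~\ref{weak orth fact}.

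Your steps (ii)--(iv), which try to derive $K\langle (G,t)^\#\rangle\cap L=K$ from $(G,t)^\#(L)=(G,t)^\#(K)$, are not a proof:
\begin{itemize}
\item The stabiliser of $a$ in $(G,t)^\#$ is in general only $Ka$-definable; it is its $a$-conjugate inside $G$ that is $K$-definable.
\item Invoking the Galois correspondence presupposes the weak orthogonality you are trying to prove.
\item Elimination of imaginaries does not force $L$-points of $K$-definable sets inside $\dcl(K,(G,t)^\#)$ to come from $(G,t)^\#(L)$.
\end{itemize}

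The ingredients you use (a $K$-definable coset of a definable group, stability, EI) cannot suffice. Take $K=\mathbb{Q}$ with trivial derivations, $E: y^2=x^3-2$, and $H=E[2]\times\mu_2\le E\times\mathbb{G}_m$. Then $Q=E[2]\times\{x^2=-3\}$ is a $K$-definable coset of $H$. For $a=(O,\sqrt{-3})$ we have $H(K\langle a\rangle)=H(K)$, because $x^3=2$ has no root in $\mathbb{Q}(\sqrt{-3})$. Yet $\sqrt{-3}\in K\langle a\rangle\cap K\langle H\rangle$, since $K\langle H\rangle=\mathbb{Q}(\sqrt[3]{2},\omega)$.

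So drop (ii)--(iv) and work with the definition as the paper states it. Note also that passing between the two forms of the no-new-constants condition is not automatic for torsors.
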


%\begin{remark}
 %   Again in hindsight, the results of \cite{LSPillay} show that the hypothesis from part 1) of the above theorem, $(G, s)^{\sharp}(K) = (G, s)^{\sharp}(\bar{K})$, is not needed. 
%\end{remark}

\begin{remark}
    We note that in the special case when $G=\U$ and $t$ is the zero section, part a) of the above theorem appears in Landesman exploration of $\U^\DD$-strongly normal extensions, see \cite[Proposition 3.64]{Landesman}.
\end{remark}

It is worth noting that the following parameterized analogue of Proposition \ref{prop: alg triv} is used in the proof of Theorem~\ref{thm:fromOmar}, although this is not stated explicitly in~\cite{LS1}.

%It is worth noting that used in the proof of Theorem~\ref{thm:fromOmar}, though not stated explicitly in~\cite{LS1}, is the following parameterized analogue of Proposition \ref{prop: alg triv}.

\begin{proposition}\label{prop: omar's triviality result}
    Let $G$ be $\D$-group defined over $K$. If $(K,\D)$ is $\D$-closed, then $\HH^1_{\Pi}(K, G)$ is trivial.  
\end{proposition}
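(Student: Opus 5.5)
The proof follows the pattern of Proposition~\ref{prop: alg triv}, which comes from Kolchin's Cohomology Theorem (Theorem~\ref{thm: Kolchin's equivalence}). Here the reduct $\ACF_0$ is replaced by the reduct $\DCF_{0,\D}$, i.e.\ the $\D$-field structure. The plan has two steps. First, establish a parameterized Kolchin theorem: for a $\D$-group $G$ over $K$ (a group definable in the $\D$-reduct $(\U,\D)$), the natural map
$$\HH^1_{\D}(K,G)\longrightarrow \HH^1_{\Pi}(K,G)$$
is a bijection. Here $\HH^1_\D$ is definable Galois cohomology in $\DCF_{0,\D}$, computed with respect to the $\D$-closure of $K$. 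Second, note that if $(K,\D)$ is $\D$-closed, the relevant Galois group $\Aut_\D(K^{\D\text{-cl}}/K)$ is trivial. Hence $\HH^1_\D(K,G)$ is trivial, and so is $\HH^1_\Pi(K,G)$.

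Concretely, I would argue directly with torsors, using the identification of $\HH^1_\Pi(K,G)$ with $K$-definable (in $(\U,\Pi)$) principal homogeneous spaces for $G$ up to $K$-definable isomorphism. The task is to show that every such torsor $X$ has a $K$-point. Take $a\in X$ lying in the $\Pi$-closure $K^{\diff}$ (this exists since $X$ is $K$-definable and nonempty). Then $\tau(a)=a\cdot g_\tau$ for all $\tau\in\Aut_\Pi(K^{\diff}/K)$, giving a definable cocycle $\tau\mapsto g_\tau\in G(K^{\diff})$. Since $G$ lives in the $\D$-reduct and $(K,\D)$ is $\D$-closed, I want to "descend" the torsor to a $\D$-definable one.

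The key lemma is the analogue of Kolchin's argument. The cocycle is determined by a $\Pi$-definable function $f(a,a')=a^{-1}a'$ on $\tp(a/K)$. Choose $a$ so that $K\langle a\rangle_\Pi$ is generated, as a $\D$-field, by finitely many derivatives $b=(a,\DD$-derivatives of $a)$ up to some order. Then $\tp_\D(b/K)$ determines $\tp_\Pi(a/K)$ on the relevant realizations. The $G$-action on $X$ induces an action of $G$ (through suitable prolongation-type maps, which are $\D$-definable) on the $\D$-locus $Y$ of $b$ over $K$, making $Y$ (or a $\D$-definable set built from it) a $\D$-torsor for $G$ over $K$. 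Since $(K,\D)$ is $\D$-closed, hence an elementary substructure of $(\U,\D)$, the $K$-definable nonempty $\D$-torsor $Y$ has a point $b_0\in K$. Projecting back gives a $K$-rational point of $X$ provided the projection of $b_0$ satisfies the $\Pi$-equations of $X$.

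That last point is the main obstacle. The $\D$-closedness of $K$ only gives points of $\D$-definable sets, while $X$ is cut out by equations involving $\DD$ as well. I would first try to avoid it by working with cocycles rather than torsor points. Restricting a definable cocycle $\tau\mapsto g_\tau$ with values in $G(K^{\diff})$, it factors through $\Aut_\D(K^{\D\text{-cl}}(\text{data})/K)$ acting on the $\D$-data generating $g_\tau$. This exhibits the $\Pi$-class as the image of a $\D$-class, which is the surjectivity half of the parameterized Kolchin theorem; injectivity is not needed here. Since $K=K^{\D\text{-cl}}$, the $\D$-class is trivial, so $g_\tau=c^{-1}\tau(c)$ for some $c$, which gives the trivialization. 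Making this factoring rigorous is where I expect the real work to lie. It amounts to showing that a $\Pi$-definable torsor for a $\D$-group is $\Pi$-definably isomorphic to a $\D$-definable one, presumably via an elimination-of-imaginaries and stable-embeddedness argument for the $\D$-reduct inside $\DCF_{0,m}$.
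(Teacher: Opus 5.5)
\textbf{There is a genuine gap: the key descent step is missing.}

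Your overall architecture is the paper's. The proposition is read off from the parameterized Kolchin theorem (Theorem~\ref{thm: param kolchin}), and you are right that only surjectivity of $[X]_\D\mapsto[X]_\Pi$ is needed, together with triviality of $\HH^1_\D(K,G)$ when $(K,\D)$ is $\D$-closed. But the one step with real content is exactly what you leave open: that a $\Pi$-definable torsor $X$ over $K$ for the $\D$-group $G$ is $\Pi$-isomorphic over $K$ to a $\D$-definable one. Neither of your sketches proves it.

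\begin{itemize}
\item In the first sketch, nothing lets you pick $a\in X$ with $K\langle a\rangle_\Pi$ finitely $\D$-generated for the \emph{given} $\D$; that would need $\D$ to bound $\Pi$-type$(a/K)$. Even granting it, $G$ acts on the prolonged data $b$ only through $g\mapsto\nabla_\DD(g)$, which is not $\D$-definable. So the $\D$-locus $Y$ is not a $\D$-torsor for $G$.
\item In the second sketch, there is no natural map from $\Aut_\Pi(K^{\Pi\text{-}\diff}/K)$ to a $\D$-Galois group. A copy of the $\D$-closure inside $K^{\Pi\text{-}\diff}$ need not be stable under $\Pi$-automorphisms, and the values $h(a,\sigma(a))$ need not lie in it anyway. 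So writing $g_\tau=c^{-1}\tau(c)$ simply assumes the conclusion.
\item Stable embeddedness does not help either. The $\D$-reduct is not a definable set of $(\U,\Pi)$, and $X$ is not a priori contained in any $\D$-definable set on which $G$ acts. There is nothing to take a $\D$-definable quotient of.
\end{itemize}

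The missing ingredient is Pillay's embedding theorem (Fact~\ref{embedding}). It says $G$ embeds in an algebraic group $H$ over $K$, and $X$ embeds $K$-definably and equivariantly into an algebraic torsor $V\circlearrowright H$ over $K$. This supplies the ambient set you lacked.

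\begin{enumerate}
\item The image of $X$ is a single $G$-orbit in $V$.
\item Since $G$ is $\D$-definable and the action of $H$ on $V$ is algebraic, the orbit relation is $\D$-definable over $K$.
\item Elimination of imaginaries in $(\U,\D)\models\DCF_{0,|\D|}$ then gives a $K$-definable quotient map $p:V\to V/G$ in the $\D$-reduct.
\item The image of $X$ is $p^{-1}(\gamma)$, where $\gamma$ is fixed by $\Aut_\Pi(\U/K)$ and hence is a $K$-point. So the image is $\D$-definable over $K$. This is exactly the surjectivity half of Theorem~\ref{thm: param kolchin}.
\end{enumerate}

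Your conclusion then follows at once. If $(K,\D)$ is $\D$-closed, then $(K,\D)\preceq(\U,\D)$, so $p^{-1}(\gamma)$ has a $K$-point. Its preimage under the $K$-definable embedding is a $K$-point of $X$.
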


Although a proof of the above proposition is suggested in \cite{LS1}, it follows immediately from a more general statement, a parameterized analogue of Kolchin's Cohomology Theorem (Theorem~\ref{thm: Kolchin's equivalence}) which to our knowledge does not appear elsewhere and could be considered of interest. We prove the parameterized version in Section~\ref{sec:parKoltheorem}; namely:

%This, in turn, follows immediately from a parameterized analogue of Kolchin\textcolor{red}{'s Cohomology Theorem} cohomology theorem (Theorem~\ref{thm: Kolchin's equivalence}) which to our knowledge does not appear elsewhere and could be of independent interest. We prove the parameterized version in Section~\ref{sec:parKoltheorem}; namely:

\begin{theorem}\label{thm: analogue kolchin}
    Let $G$ be a $\D$-group over $K$. Then, there is a canonical isomorphism
    $$\HH^1_\Pi(K,G)\cong \HH^1_\D(K,G),$$
    where the latter denotes Kolchin's cohomology with respect to $\D$ (equivalently, definable cohomology in the reduct $(\U,\D)$ taking $\D$-diff closures).
\end{theorem}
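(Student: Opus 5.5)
We plan to realize both cohomology sets as sets of isomorphism classes of definable torsors and compare them. By the model-theoretic description of Kolchin's cohomology (as in Section~\ref{sec:modPrelims}), $\HH^1_\Pi(K,G)$ classifies $K$-definable (in $(\U,\Pi)$) principal homogeneous spaces for $G$ up to $K$-definable isomorphism. Likewise $\HH^1_\D(K,G)$ classifies $K$-definable torsors in the reduct $(\U,\D)$. Equivalently, in cocycle language, we use continuous (definable) cocycles $\Aut_\Pi(K^{\diff}_\Pi/K)\to G(K^{\diff}_\Pi)$ versus $\Aut_\D(K^{\diff}_\D/K)\to G(K^{\diff}_\D)$. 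The canonical map $\HH^1_\D(K,G)\to \HH^1_\Pi(K,G)$ regards a $\D$-definable torsor as a $\Pi$-definable one. At the cocycle level it is inflation-type: restrict $\Pi$-automorphisms of $K^{\diff}_\Pi$ to $K^{\diff}_\D$, which may be taken inside $K^{\diff}_\Pi$ and is preserved by every $\Pi$-automorphism over $K$. We must show this map is well defined, injective and surjective.

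Well-definedness and injectivity are handled together. Suppose $V,W$ are $\D$-torsors over $K$ that become $\Pi$-definably isomorphic over $K$. Any isomorphism of $G$-torsors is determined by the image of a single point. Hence the set of torsor isomorphisms $V\to W$ is itself a $K$-$\D$-definable torsor, namely $\mathrm{Iso}(V,W)$ under $\mathrm{Aut}_G(V)$, the inner twist of $G$. So it suffices to know that a $\D$-definable set over $K$ with a $\Pi$-definable $K$-point, i.e.\ a point in $K$ itself, has a $K$-point. This is immediate, since $K$-points are just points with coordinates in $K$. Thus injectivity reduces to the trivial observation that $V(K)$ means the same in both structures.

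Surjectivity is the main obstacle: we must show every $\Pi$-definable torsor $P$ over $K$ for $G$ is $K$-$\Pi$-definably isomorphic to a $\D$-definable one. We would first try the cocycle route. Pick $p\in P(K^{\diff}_\Pi)$; the cocycle $\sigma\mapsto p^{-1}\sigma(p)$ takes values in $G(K\langle p\rangle_\Pi)$. We want to change $p$ so that the values lie in $G(K^{\diff}_\D)$ and depend only on $\sigma$ restricted there. Following Kolchin's argument for Theorem~\ref{thm: Kolchin's equivalence} (where $\D=\emptyset$ and one reduces to algebraic closure), we would use the structure of $\DCF_{0,m}$ relative to its $\D$-reduct. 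Namely, $\U$ viewed as a $\D$-field is differentially closed, and $\Pi$-definable sets in $G$ are $\D$-definable sets in prolongation coordinates $(x,\DD x,\DD^2 x,\dots)$.

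Concretely, $P$ is contained in a $\D$-definable torsor $\tilde P$ for $G$ over $K$, up to a definable bijection. To see this, take the $\D$-locus over $K$ of a generic $p$: the $\D$-type of $p$ over $K$, taken in the $\D$-reduct with $K$ viewed as a $\D$-field. Then define $\tilde P$ as the $\D$-definable set of $g$-translates. The key step is to show that the $\D$-definable closure of $K\cup\{p\}$ gives a $\D$-torsor, using the $\Pi$-definable group action and a group-chunk argument in $(\U,\D)$ (Hrushovski–Weil style), as in \cite{LSMP}. Finally, $\tilde P$ must have a $K^{\diff}_\Pi$-point compatible with $P$. This gives the isomorphism $P\cong\tilde P$, since both are torsors for the same $G$ and a $G$-equivariant $K$-definable map between torsors is an isomorphism. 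We expect this construction, and checking that $\tilde P$ is genuinely a $G$-torsor, to be the hard part. Canonicity then follows because the map is defined without choices.
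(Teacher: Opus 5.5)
Your well-definedness and injectivity argument is correct and is essentially the paper's. A torsor isomorphism is determined by one value, so it lies in a $\D$-definable family, and $\Aut_\Pi(\U/K)$-invariance puts its code in $\operatorname{dcl}_\Pi(K)=K$. The paper writes the given $f$ as $\rho(x)=f(a)\cdot h(a,x)$ to see that it is $\D$-definable, and then checks invariance. You should say explicitly that $\mathrm{Iso}(V,W)$ is coded by real tuples via elimination of imaginaries in $(\U,\D)$.

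The genuine gap is surjectivity, which is the real content of the theorem; your sketch gives no argument for it.
First, ``$P$ is contained in a $\D$-definable torsor $\tilde P$ for $G$'' is not a reduction. If the inclusion is $G$-equivariant, $P$ is a nonempty $G$-invariant subset of a $G$-torsor, so $P=\tilde P$, and you have restated the goal.
Second, the construction cannot work as described:
the action of $G$ on $P$ is only $\Pi$-definable, so ``the $\D$-definable set of $g$-translates'' of a $\D$-locus of $p$ is not a $\D$-definable object;
and $K\langle p\rangle_\Pi$ need not be finitely $\D$-generated. For example, for $G=\mathbb G_a$ regarded as a $\D$-group, a generic point of a $\Pi$-torsor for $G$ has $\Pi$-type $m>|\D|$, so ``prolongation coordinates'' involve infinitely many coordinates.
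An unspecified group-chunk argument in $(\U,\D)$ would at best produce some $\D$-definable group or torsor, with no built-in compatibility with the given $\Pi$-action. Your final step (``$\tilde P$ has a point compatible with $P$'') presupposes exactly the $K$-definable equivariant map you are trying to construct.

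The missing idea is to place $P$ inside an ambient object on which a \emph{larger} group acts $\D$-definably, so that $P$ becomes a single $G$-orbit. Fact~\ref{embedding} provides this: $G\le H$ with $H$ algebraic over $K$, and a $K$-definable $G$-equivariant embedding of $P$ into an algebraic $H$-torsor $W$. This is the nontrivial input, and it is where the classical ($\D=\emptyset$) theory is used as a black box instead of being redone relative to $\D$.
The argument then runs as follows.
Since $G$ is $\D$-definable and $H$ acts algebraically on $W$, the relation $w\sim w'$ iff $w'=w\cdot g$ for some $g\in G$ is $\D$-definable over $K$.
By elimination of imaginaries in $(\U,\D)$, the quotient map $W\to W/G$ is $\D$-definable over $K$.
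The image of $P$ is a single point fixed by $\Aut_\Pi(\U/K)$, hence a $K$-point $\gamma$.
So $P$ is the fibre over $\gamma$, hence $\D$-definable over $K$, with $G$-action the restriction of the algebraic one.

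A minor side point concerns your cocycle-level description, which restricts $\Pi$-automorphisms to a copy of $K^{\D\text{-}\diff}$ inside $K^{\Pi\text{-}\diff}$. It assumes that copy is $\Aut_\Pi(K^{\Pi\text{-}\diff}/K)$-invariant, which is not automatic (unlike $K^{\alg}$ when $\D=\emptyset$). It is also unnecessary, since $\sigma\mapsto h(a,\sigma(a))$ defines the $\Pi$-cocycle directly.
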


%This parameterized version of Theorem \ref{thm: Kolchin's equivalence}, gives two consequences, first by Fact \ref{fact: SES}, we obtain Proposition \ref{prop: omar's triviality result}. Secondly, this reconciles the choice of the map in our characterization of when the extension is generated by an equation on the group (``moreover" clause of the main result). In the last section we give the precise relationship between the two maps $\iota$ and $\iota^1$. However, we believe Theorem \ref{thm: analogue kolchin} to be of interest in its own right.

As we already hinted above, one of the main purposes of this paper is to improve the content of Theorem~\ref{thm:fromOmar}. Firstly, in part (a), we consider differential Galois extensions for general parameterized D-torsors (instead of restricting ourselves to log-equations on parameterized D-groups); and secondly, in part (b), we drop the assumption that $(K,\D)$ is $\Delta$-closed and give a criterion for exactly when one recovers Galois extensions for log-differential equations on the Galois group. Thus, our results here give a suitably general formulation of the generalized strongly normal theory \emph{without additional closure or cohomological assumptions}. 

We now state our main theorem, generalizing Kolchin's Theorem~\ref{them:Vprim}, Pillay's Proposition~\ref{prop:Pillayresult}, as well as the first author's Theorem~\ref{thm:fromOmar}.  

\begin{theorem}[{\bf Main result}] \label{thm:mainresult} \
\begin{enumerate}
    \item Let $\DD\cup\D$ be a partition of $\Pi$ with $\DD$ nonempty and $(V,s)\circlearrowright (G,t)$ a parameterized $\DD/\D$-torsor. A solution $a$ to the $\#$-differential equation of $(V,s)$ generates a $(G,t)^\#$-strongly normal extension $L=K\langle a \rangle_\Pi$ if and only if $(G,t)^\#(L)=(G,t)^\#(K)$.
    
    \item Let $L$ be a generalized strongly normal extension %Let $L = K \l b \r_\Pi$ be a generalized $X$-strongly normal extension 
    with $L/K$ regular (so the differential Galois group is connected). Then, after a $GL_m(\mathbb Q)$-transformation of $\Pi$, there exists a partition $\DD\cup\D$ with $\D$ witnessing the $\Pi$-type of $L/K$ and a parameterized $\DD/\D$-torsor $(V,s)\circlearrowright (G,t)$ over $K$ such that $(G,t)^\#$ is the differential Galois group of $L/K$ and $L$ is a differential Galois extension for $(V,s)$. In fact, $L$ is isomorphic to the $\Pi$-function field of $(V,s)^\#$. 
    
    Moreover, $L/K$ is the differential Galois extension for a log-differential equation on $(G,t)$ if and only if the image of the class associated to the $\Pi$-torsor $(V,s)^\#\circlearrowright (G,t)^\#$
    under the natural map in cohomology (using Theorem~\ref{thm: analogue kolchin})
    $$\HH^1_{\Pi}(K,(G,t)^\#) \to \HH^1_{\D}(K,G)$$
    is trivial. In particular, this occurs when $(K,\D)$ is $\D$-closed as then $\HH^1_\D(K,G)$ is trivial.
\end{enumerate}
\end{theorem}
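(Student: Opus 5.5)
For part (1), I will reduce to the general model-theoretic criterion from \cite{LSPillay}, as collected in Section~\ref{sec:modPrelims}. The first step is to check that the solution set of the $\#$-equation of $(V,s)$ is a $K$-definable principal homogeneous space for $(G,t)^\#$. If $a,b$ both solve $\nabla_\DD(x)=s(x)$, then compatibility of $s$ with the action, together with $G$ acting simply transitively on $V$, gives a unique $g\in G$ with $b=ag$. Applying $\nabla_\DD$ to $b=ag$ and comparing with $s(ag)=s(a)\cdot t(g)$ in $\tau_{\DD/\D}$ forces $\nabla_\DD g=t(g)$, that is, $g\in(G,t)^\#$. Conversely, $a\cdot(G,t)^\#$ consists of solutions.

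Consequently, for any $\sigma\in\Aut_\Pi(\U/K)$ we have $\sigma(a)=a g_\sigma$ with $g_\sigma\in(G,t)^\#$, and $g_\sigma$ is $\Pi$-rational over $\langle a,\sigma(a)\rangle$. This yields strong normality: $\sigma(L)\subseteq L\langle (G,t)^\#\rangle$. It then remains to show that the no-new-constants condition $K\langle (G,t)^\#\rangle\cap L=K$ is equivalent to $(G,t)^\#(L)=(G,t)^\#(K)$. The forward direction is immediate. For the backward direction, I will invoke the fact from \cite{LSPillay} that, for a $K$-definable group $X$ and $L$ generated by a point of a $K$-definable $X$-torsor, weak orthogonality of $\tp(a/K)$ to $X$ is equivalent to $X(L)=X(K)$. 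The argument runs through the definable Galois group: the stabilizer of $\tp(a/K\langle X\rangle)$ is a definable subgroup, and a new point of $X$ in $L$ would produce a nontrivial coset.

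For part (2), I will start from the Galois theory of \cite[\S3]{LSPillay} and \cite{LS1}. After a $GL_m(\mathbb Q)$ change of derivations, there is a partition $\DD\cup\D$ with $\D$ witnessing the $\Pi$-type of $L/K$, and the Galois group is $K$-definably isomorphic to $(G,t)^\#$ for a parameterized D-group $(G,t)$ over $K$, as in \cite[Thm.~5.7]{LS1}. I then take a generator $b$ of $L$ with $q=\tp(b/K)$. The realisations of $q$ form a $K$-definable $(G,t)^\#$-torsor $Y=q(\U)$ (here regularity of $L/K$ makes the group connected and the action well-defined). To geometrize $Y$, I will use the $\D$-closure of the relevant parameters. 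Let $V$ be the $\D$-locus over $K$ of a suitable tuple of $\DD$-derivatives of $b$ (finitely many suffice because $\D$ bounds the type). The $\DD$-derivatives then define a $\D$-rational section $s$, and the action of $(G,t)^\#$ on $Y$ extends, via $\D$-Zariski closure and generic-point arguments, to a $\D$-torsor action of $G$ on $V$ compatible with $s$. This mirrors the construction of D-groups from \cite{LS1} and Pillay's binding-group arguments. With this in place, $L=K\langle b\rangle$ is the $\Pi$-function field of $(V,s)^\#$. By part (1), together with the fact that the Galois group equals $(G,t)^\#$, $L$ is a Galois extension for $(V,s)$.

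For the moreover clause, $L$ is a Galois extension for a log-equation on $(G,t)$ exactly when $V$ has a $K$-point. Indeed, a $K$-point $v\in V$ gives $a=v^{-1}b\in G$ (via the torsor identification), and this $a$ satisfies an $\alpha$-log equation with $\alpha=\nabla_\DD(v)\cdot s(v)^{-1}$ transported into $\tau_{\DD/\D}G_e$. Conversely, a log-solution generating $L$ trivializes $V$. Triviality of $V$ as a $\D$-torsor is triviality of its class in $\HH^1_\D(K,G)$, and by Theorem~\ref{thm: analogue kolchin} this class is the image of $[(V,s)^\#]$ under the map induced by $(G,t)^\#\hookrightarrow G$. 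When $(K,\D)$ is $\D$-closed, every $\D$-torsor has a $K$-point.

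I expect the main obstacle to be the geometric step in part (2): extending a merely definable torsor action of $(G,t)^\#$ to an honest $\D$-algebraic torsor $V$ for $G$ with a compatible section $s$ over $K$ itself, without any closedness hypothesis on $K$. My first attempt would use Weil-style group-chunk/torsor-chunk theorems in $\DCF_{0,|\D|}$ applied to the generic type of $(G,t)^\#$, then check compatibility of $s$ generically and spread it out.
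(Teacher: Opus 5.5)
Your part (1) is essentially the paper's Lemma~\ref{lem: part a}. Your ``if'' argument in the moreover clause is a valid, slightly more geometric variant of the paper's. The paper uses Theorem~\ref{thm: cohomological fact} to get a $K$-definable left coset $a\cdot(G,t)^\#$, and reads off $\alpha$ as the constant value of $\nabla^G_\DD(x)\cdot t(x)^{-1}$ on it. You instead take a $K$-point $v\in V$, write $b=v*a$, and get $\nabla^G_\DD(a)=\alpha\cdot t(a)$, with $\alpha\in\tau_{\DD/\D}G_e(K)$ determined by $\nabla^V_\DD(v)\cdot\alpha=s(v)$.

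The genuine gap is the step you flag yourself in part (2): realizing the $\Pi$-torsor $Q=q(\U)\circlearrowright H$ as $(V,s)^\#$ for a parameterized $\DD/\D$-torsor $(V,s)\circlearrowright(G,t)$ over $K$. Your sketch does not deliver this. Taking the $\D$-locus $V_0$ of a tuple of $\DD$-derivatives of $b$ only yields what Fact~\ref{Dvarprop}(3) yields: a parameterized D-variety whose $\#$-points are $\Pi$-birational to $Q$. The $G$-action on $V_0$ is at best generically defined. Moreover, $(V_0,s)^\#$ is $\Pi$-closed, so it can strictly contain the image of $Q$, which is merely the set of realizations of an isolated type. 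Turning this into an honest $\D$-torsor with a compatible section is a torsor-chunk argument that you leave undone. The paper does not do this by hand. It applies Theorem~\ref{thm: equiv of cats}(2) directly to $Q\circlearrowright H$. That result is the $\#$-points equivalence between parameterized $\DD/\D$-torsors for connected parameterized D-groups and $\Pi$-torsors for connected $\Pi$-algebraic groups of $\Pi$-type bounded by $\D$, resting on \cite[Theorem 4.6]{LS1}. It produces $(G,t)$ and $(V,s)$ at once, with $(G,t)^\#=H$ and $(V,s)^\#\cong Q$.

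To invoke it you also need two further facts. First, $\DD\neq\emptyset$: the paper gets this from isolation of $q$, since Fact~\ref{fact: torsor theorem}(1) forces the $\Pi$-type of $Q$ to be less than $m$, so Fact~\ref{Koltype} yields a proper $\D$. Second, $\D$ must bound the $\Pi$-type of $H$ as well as of $Q$ (as $H$ is internal to $Q$).

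Finally, the ``only if'' half of the moreover clause is only asserted in your proposal, and it is not automatic. Suppose $a'\in G$ solves an $\alpha$-log equation with $K\langle a'\rangle_\Pi=L$. The naive candidate $b*(a')^{-1}$ is in general not a $K$-point of $V$. The reason is that the $K$-definable bijection from $Q$ onto the solution set $a'\cdot(G,t)^\#$, coming from the interdefinability of $b$ and $a'$, is only $\theta$-equivariant for some $K$-definable automorphism $\theta$ of $(G,t)^\#$. One repairs this by extending $\theta$ to a $K$-automorphism $\tilde\theta$ of $(G,t)$, using full faithfulness in Theorem~\ref{thm: equiv of cats}(1), and composing with $\tilde\theta^{-1}$. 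Then $Q$ is $K$-isomorphic as a torsor to a $K$-definable left coset of $(G,t)^\#$ in $G$, and Theorem~\ref{thm: cohomological fact} gives trivial image. The paper's written proof likewise only spells out the ``if'' direction, so this is a point to make explicit rather than a divergence from it.
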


%Part a) will be proven in Section \ref{sec:defDvar}. 

\medskip

In Section~\ref{sec:modPrelims} we provide a model-theoretic version of the above result at the level of generality of definable cohomology in totally transcendental theories. 

%In fact, we will use it in the proof of the main theorem specialised to the theory DCF$_{0,m}$.

\

%\textcolor{red}{Do we want to say here that section 2 contains generalization of the theorem at the level of a totally transcendental theory (in the definable category)? Theorem 2.8 below is now stated less in the form of the main theorem above. It's alright with me if we leave this as is.}

%\textcolor{red}{Other remarks: So I think one thing which was subtle to me for a while but makes sense to me now: The Kolchin-Lang paper shows that every strongly normal extension has a model which is a torsor for the Galois group but there is no mention of an equation on the torsor. I think this perspective is from Bialynicki-Birula but I have to check this. I am fine not mentioning this. }

%{\color{blue}OLS: I put a comment at the end of Section 2 to address that we obtain a general model-theoretic version.

%I would suggest not mentioning the Bialynicki-Birula approach. I feel it wouldn't add much and might be cumbersome to precisely state it, and readers familiar with it will probably see how to do the translation. Note that Anand's mentions this approach in his DGT paper as Prop 2.15}

\section{Some model-theoretic generalities}\label{sec:modPrelims}

In this section we lay out the general conventions and results that will be used in the rest of the paper. To start with, we review and elaborate on the results from~\cite{LSPillay} around model-theoretic definable Galois theory. We fix a first-order totally transcendental theory $T$ with elimination of imaginaries. Let $\U$ be a monster model of $T$ saturated in a sufficiently large cardinal $\kappa$. Let $A$ be a small (with respect to $\kappa$) $\dcl$-closed set of parameters over which we will work. Let $M$ be a copy of the prime model over $A$. 

Let $X$ be an $A$-definable set and $q \in S_n(A)$. Following \cite{LSPillay} we say that $q$ is strongly internal to $X$ if for any pair of realizations $b_1$ and $b_2$ of $q$, $b_2 \in \dcl(b_1, A, X)$. Furthermore, recall that $q$ is said to be weakly orthogonal to $X$ if $q$ has a unique extension to a type over $A\cup X$. 

\begin{fact}\cite[Lemma 2.2(i)]{LSPillay}\label{weak orth fact}
    The type $q$ is weakly orthogonal to $X$ if and only if for any (equivalently some) realization $b$ of $q$ we have $X(\dcl(A,b))=X(A)$.
\end{fact}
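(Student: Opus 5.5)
We prove the two directions separately, using only the two definitions: strong internality (for any realizations $b_1,b_2$ of $q$, $b_2\in\dcl(b_1,A,X)$) and weak orthogonality (unique extension of $q$ to $A\cup X$). Fix a realization $b$ of $q$.

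For the forward direction, assume $q$ is weakly orthogonal to $X$. The inclusion $X(A)\subseteq X(\dcl(A,b))$ is trivial. For the reverse, let $c\in X(\dcl(A,b))$, say $c=f(b)$ with $f$ an $A$-definable function. Suppose toward a contradiction that $c\notin X(A)=X\cap\dcl(A)$. In a totally transcendental theory with elimination of imaginaries, $\dcl(A)$ is exactly the set of $A$-definable points. So $c$ has another $A$-conjugate $c'\neq c$ in $\U$, and $c'$ also lies in $X$.

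Choose $\sigma\in\Aut(\U/A)$ with $\sigma(c)=c'$. The formula $f(x)=c$ has parameters in $A\cup X$ and is satisfied by $b$. Then $\sigma(b)$ realizes $q$ and satisfies $f(x)=c'\neq c$. This gives two extensions of $q$ to $A\cup X$: the type of $b$ contains $f(x)=c$, and the type of $\sigma(b)$ contains $f(x)\neq c$. That contradicts uniqueness, so $c\in X(A)$.

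For the converse, assume $X(\dcl(A,b))=X(A)$ for some realization $b$. Suppose $q$ had two distinct extensions to $A\cup X$. The key step is a finite-character argument. Two such extensions differ on a single formula $\phi(x,\bar c)$ with $\bar c$ a finite tuple from $X$ (or from $X^n$, which we treat as the definable set $X^n$). Let $\psi(\bar y)$ be the formula $\phi(b,\bar y)$ restricted to $X^n$. It defines a $\dcl(A,b)$-definable subset $Y$ of $X^n$ with two properties:
\begin{enumerate}
\item $Y$ is not $A$-invariant "uniformly", since some realization $b'$ of $q$ (the one whose type contains $\neg\phi(x,\bar c)$) yields a different set $Y'$ containing or omitting $\bar c$ differently;
\item the canonical parameter $e=\ulcorner Y\urcorner$ lies in $\dcl(A,b)$ by elimination of imaginaries.
\end{enumerate}

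The main obstacle is that $e$ need not lie in $X$ itself, only in $\dcl^{eq}(A,X)$-type sorts. I would handle it as follows. By stability (total transcendence), $\tp(b/A\cup X)$ is definable, and its $\phi$-definition is over $\dcl(A,b)\cap\dcl(A,X)$. More precisely, $q$ has a unique extension to $A\cup X$ as soon as every such $\phi$-type is determined over $A$. Definability of types in stable theories reduces this to showing that the canonical parameters of the $\phi$-definitions, which lie in $\dcl(A,X)$, are in $\dcl(A)$.

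Here I would use the hypothesis in the form "for any $b$". Since $\dcl(A,b)\cap\dcl(A,X)$ is coded by elements of $\dcl(A,X)$, we replace $X$ by the definable family $X^n$ and an $A$-definable function whose image contains $e$. We then apply the hypothesis to that definable set; this is the reason the statement is quantified over all $A$-definable $X$ and stable under such replacements. If this reduction fails, I would instead cite the standard equivalence in \cite[Lemma 2.2]{LSPillay} for the converse direction. I expect this step to be the hard one.

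Finally, "any" versus "some" realization follows because all realizations of $q$ are $\Aut(\U/A)$-conjugate, and $X(\dcl(A,b))$ transforms accordingly while $X(A)$ is fixed.
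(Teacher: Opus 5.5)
\textbf{Forward direction.} This part is correct. If $c=f(b)\in X\setminus A$, then an $A$-conjugate $c'\neq c$ and some $\sigma\in\Aut(\U/A)$ with $\sigma(c)=c'$ give two realizations $b,\sigma(b)$ of $q$ whose types over $A\cup X$ disagree on $f(x)=c$. Your any/some remark is also fine.

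\textbf{Converse: the gap.} The converse breaks exactly at the step you flagged. From two distinct extensions of $q$, your canonical-parameter argument correctly produces the code $e$ of $\phi(b,\U)\cap X^n$. It satisfies $e\in\dcl(A,b)\setminus A$, and, by stable embeddedness (definability of $\tp(b/A\cup X)$ over $A\cup X$), also $e\in\dcl(A,X)$. But the hypothesis $X(\dcl(A,b))=X(A)$ says nothing about elements of $\dcl(A,X)$ lying outside $X$. Your proposed repair, applying the hypothesis to some other $A$-definable set $g(X^n)$ containing $e$, is not available: the hypothesis concerns the single fixed set $X$, not all $A$-definable sets. Falling back on citing \cite{LSPillay} is not an argument.

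\textbf{The converse is false as stated.} The gap cannot be closed, because the converse, read literally, fails. Take $T=\ACF_0$, $A=\mathbb{Q}$, $X=\{x : x^8=1\}$, and $b=\zeta+\zeta^{-1}$ for a primitive $8$th root of unity $\zeta$, so that $b^2=2$.
\begin{itemize}
\item $\dcl(A,b)=\mathbb{Q}(b)\cong\mathbb{Q}(\sqrt 2)$ contains no square root of $-1$, since $-2$ is not a rational square. Hence it contains no element of $X$ other than $\pm 1$, so $X(\dcl(A,b))=X(A)$.
\item However, $-b$ also realizes $q=\tp(b/A)$. Now $\tp(b/A\cup X)$ contains $x=\zeta+\zeta^{7}$, while $\tp(-b/A\cup X)$ contains its negation. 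So $q$ is not weakly orthogonal to $X$.
\end{itemize}
Here $q$ is even isolated and strongly internal to $X$, and $b$ lies on the $X$-torsor $x^8=16$. So none of the hypotheses used in the surrounding section rescue the literal statement.

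\textbf{What your argument does prove.} Your two arguments establish, almost verbatim, the corrected equivalence: $q$ is weakly orthogonal to $X$ iff $\dcl(A,b)\cap\dcl(A,X)=A$.
\begin{itemize}
\item For the forward direction, run your conjugation argument on $e=f(b)=g(\bar c)$ with $\bar c\in X^n$, in place of $c\in X$.
\item For the converse, use your canonical-parameter argument, which lands precisely in $\dcl(A,b)\cap\dcl(A,X)$.
\end{itemize}
This is the form ``$K\langle X\rangle\cap L=K$'' used in the introduction. The version with $X(\dcl(A,b))$ agrees with it only for special $X$, e.g.\ the field of constants, via linear disjointness.
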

%For convenience of presentation, for the rest of this section we work with $A$ as a set of parameters added to our language (in particular, $A\subseteq \dcl(\emptyset)$). 
We say that a $\dcl$-closed set $B$ is a Galois extension of $A$ relative to $X$ if $B=\dcl(A,b)$ and $q=tp(b/A)$ is strongly internal and weakly orthogonal to $X$. These conditions capture the idea that Galois extensions of $A$ have a (extrinsic) definable Galois group living in $\dcl(A,X)$. We make this precise in the following fact. We refer to part (2) of this fact as \emph{The Torsor Theorem} and we will use it heavily later. Part (3) is sometimes referred to as the indirect Galois correspondence. 

\begin{fact}\cite[\S 2]{LSPillay}\label{fact: torsor theorem} Let $X$ be an $A$-definable set and $q=tp(b/A)$ be a type which is both strongly internal and weakly orthogonal to $X$. Let $Q$ denote set of realizations of $q$ in $\U$ and $Aut(Q/X,A)$ denote the group of permutations of $Q$ which are elementary over $A\cup X$.
    \begin{enumerate}
        \item The type $q$ is isolated and hence has a realization $b$ in $M$. 
        \item $Aut(Q/X,A)$ is isomorphic to an $A$-definable group $H$ living in $\dcl(A,X)$. $H$ is connected if and only if $\acl(A)\cap\dcl(A,b)=A$. Furthermore, $Q$ is an $A$-definable right torsor for $H$.
        \item The group $H$ is called the extrinsic Galois group of the Galois extension extension $B=\dcl(A,b)$. Moreover, $B$ is a Galois extension of $A$ relative to $H$, and there is a Galois correspondence between intermediate $\dcl$-closed sets of $B/A$ and $A$-definable subgroups of $H$.
    \end{enumerate}
\end{fact}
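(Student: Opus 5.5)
The plan is to reconstruct the argument of \cite[\S 2]{LSPillay} for each of the three parts, working throughout in the monster model $\U$ of the totally transcendental theory $T$ with elimination of imaginaries.

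For part (1), I would use total transcendence. Since $T$ is $\omega$-stable, a prime model $M$ over $A$ exists, and it is atomic over $A$. So it suffices to show that $q$ is isolated. Strong internality gives, for any two realizations $b_1,b_2$ of $q$, that $b_2\in\dcl(b_1,A,X)$. By compactness there is a single $A$-definable family of partial functions $f(x,y,z)$ with the following property: for realizations $b_1,b_2$ there is a tuple $c$ from $X$ with $b_2=f(b_1,c)$. Weak orthogonality says $q\vdash q|_{A\cup X}$.

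The goal is then an isolating formula. I would take a formula of minimal Morley rank and degree in $q$ and show that it isolates $q$. Suppose it did not. Then a second type $q'\neq q$ over $A$ would share this formula. Combining the definable-function description with uniqueness of the extension over $A\cup X$ should give a contradiction. If this direct route is awkward, I would simply cite the argument of \cite[Lemma 2.2, Prop.~2.3]{LSPillay}. I expect this isolation step to be the main technical point.

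For part (2), I would fix a realization $b$ of $q$. Each $\sigma\in Aut(Q/X,A)$ is determined by $\sigma(b)$, because every element of $Q$ lies in $\dcl(b,A,X)$. By strong internality, $\sigma(b)=f(b,c)$ for some $c\in X$. Weak orthogonality shows that $tp(b/A\cup X)$ is determined. Therefore any $b'\models q$ yields an elementary permutation of $Q$ over $A\cup X$ sending $b$ to $b'$, so the action of $Aut(Q/X,A)$ on $Q$ is regular.

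Identifying $\sigma$ with the imaginary code of the map $b\mapsto\sigma(b)$, I would obtain an element of $\dcl(A,X)$, using elimination of imaginaries. This gives the $A$-definable group $H$ with the group law interpreted uniformly, and $Q$ becomes an $A$-definable right torsor for $H$.

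For the connectedness statement, I would argue in both directions:
\begin{enumerate}
\item if $H^0\neq H$, then the orbit of $b$ under $H^0$ is an $\acl(A)$-definable subset of $Q$, and its code lies in $\acl(A)\cap\dcl(A,b)\setminus A$;
\item conversely, any element of $\acl(A)\cap\dcl(A,b)$ outside $A$ gives a finite-index definable stabilizer subgroup.
\end{enumerate}

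For part (3), the Galois correspondence between intermediate $\dcl$-closed sets and $A$-definable subgroups of $H$ is the standard map $C\mapsto \mathrm{Fix}(C)$. Its inverse sends a subgroup $K$ to $\dcl(A,\text{code of } bK)$, and the verification goes as in \cite{LSPillay}. The fact that $B$ is Galois relative to $H$ follows because $Q\subseteq\dcl(b,A,H)$. Weak orthogonality to $H$ is inherited from weak orthogonality to $X$, since $H\subseteq\dcl(A,X)$.
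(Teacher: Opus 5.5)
The paper gives no proof of this fact; it is quoted from \cite[\S 2]{LSPillay}. So your outline has to stand on its own, and it has two gaps.

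\textbf{Isolation in (1).} Your route through a formula of minimal Morley rank and degree is the right one, but the contradiction is only asserted. As you have set things up, it cannot be derived: your definable-function description relates two realizations of $q$, and says nothing about a realization $b''$ of a competing type. Two ingredients are missing.
\begin{enumerate}
\item[(i)] Apply compactness to $q(x_1)\cup q(x_2)\vdash\exists z\in X\,(x_2=f(x_1,z))$. This gives a formula $\phi_0\in q$ such that any two realizations of $\phi_0$ are related in this way.
\item[(ii)] Convert weak orthogonality into a statement about rank. For any tuple $c$ from $X$, $q$ has an extension to $Ac$ of the same Morley rank. By weak orthogonality $\tp(b/Ac)$ is its only extension, so $\mathrm{RM}(b/Ac)=\mathrm{RM}(b/A)$.
\end{enumerate}
Now let $\phi\in q$ imply $\phi_0$ and have the Morley rank $\alpha$ and degree of $q$. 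If $b''\models\phi$ with $\tp(b''/A)\neq q$, then $\mathrm{RM}(b''/A)<\alpha$. Write $b=f(b'',c)$ with $c$ from $X$; it must be this direction, since $b''=f(b,c)$ yields nothing. Then
\[
\alpha=\mathrm{RM}(b/A)=\mathrm{RM}(b/Ac)\le\mathrm{RM}(b''/Ac)\le\mathrm{RM}(b''/A)<\alpha,
\]
a contradiction.

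This use of rank is where total transcendence is genuinely needed, beyond the existence of $M$. Let $T$ be the theory of $(\mathbb Z,+)$ acting by translation on a second sort $P=\mathbb Z$ with predicates $P_n=n\mathbb Z$, and work in $T^{eq}$. The type over $\dcl^{eq}(\emptyset)$ of $0\in P$ is strongly internal and weakly orthogonal to the group sort: translating $P$ by a group element divisible by every $n$, while fixing the group sort, is an automorphism. Yet the type is not isolated, and its binding group (the elements divisible by every $n$) is only type-definable.

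\textbf{The group $H$ in (2).} Suppose ``the imaginary code of the map $b\mapsto\sigma(b)$'' means $\sigma$ itself, as a definable permutation of $Q$. Then its code is in general not in $\dcl(A,X)$. If it were definable from some $c$ in $X$, the relation $y=\sigma(x)$ on $Q$ would be $(A\cup X)$-definable. It would then be preserved by every $\rho\in Aut(Q/X,A)$, forcing $\rho\sigma=\sigma\rho$ for all such $\rho$. Coding the $\sigma$'s this way produces the intrinsic group, which acts on $Q$ on the left; that is not the $H$ of the statement.

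Code instead the map $r_c\colon x\mapsto f(x,c)$ on $Q$, where $\sigma(b)=f(b,c)$.
\begin{enumerate}
\item[(a)] By weak orthogonality, $r_c$ maps $Q$ into $Q$.
\item[(b)] It commutes with every $\rho\in Aut(Q/X,A)$, since these preserve $(A\cup X)$-definable relations. Hence it is a permutation of $Q$.
\item[(c)] Once $Q$ is $A$-definable, $r_c$ is $Ac$-definable, so its code lies in $\dcl(A,X)$.
\end{enumerate}
These codes form $H$. The action $x\cdot r_c=r_c(x)$ makes $Q$ a right $H$-torsor, and $\sigma\mapsto r_c$ is an isomorphism $Aut(Q/X,A)\cong H$ (for the product making this a right action). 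This isomorphism depends on $b$.

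The $A$-definability of $H$, of its product and of the action all require $Q$ to be $A$-definable. So (2) depends on (1), which your sketch of (2) never invokes. With these repairs, your connectedness argument and part (3) go through.
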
 

\begin{example} Let $T=\DCF_{0,m}$. Denote the derivations by $\Pi$ and the field of $\Pi$-constants by $\U^\Pi$. Further details of the following examples appear in \cite[\S3]{LSPillay}.
    \begin{enumerate}
        \item If $L=K\langle b\rangle$ is a strongly normal extension in the sense of Kolchin \cite[Chapt VI]{Kolchin1}, then it is a Galois extension relative to $\U^\Pi$ in the above sense; namely, $tp(b/K)$ is strongly internal and weakly orthogonal to $\U^\Pi$. In this case the extrinsic differential Galois group can be identified with an algebraic group living in $\U^\Pi$.
        \item If $L/K$ is a (parameterized) $\Delta$-strongly normal extension in the sense of Landesman \cite{Landesman} where $\Delta\subset \Pi$, then it is a Galois extension relative $\U^{\DD}$ where $\DD=\Pi\setminus \D$, and in this case the differential Galois group is a $\D$-algebraic group in $\U^{\DD}$.
        \item More generally, if $L/K$ is an $X$-strongly normal extension in the sense of~\cite{LS1} where $X$ is a $K$-definable set, then it is Galois extension relative to $X$ with Galois group isomorphic to a $\Pi$-algebraic group.
        \end{enumerate}
\end{example}

%\begin{remark}\label{rem: strong normality internality rephrasing}
  %  Set $T = \DCF_{0,m}$. Let $L = K\langle \beta\rangle/K$ be a generalized $X$-strongly normal extension. The strong normality condition tells us exactly that $\tp(\beta/K)$ is strongly internal to $X$ and the no new constants condition tells us exactly that $\tp(\beta/K)$ is weakly orthogonal to $X$. In this case the extrinsic differential Galois group will be a parametric group living in $\dcl^{\eq}(X)$. 
%\end{remark}

We now recall some fundamental results on definable Galois cohomology from~\cite{LSMP} and \cite{Pillay3}.
%\textit{Remarks on Galois cohomology and definability} 
%and 
%\textit{More on Galois cohomology, definability, and differential algebraic groups} \cite{LSMP}. 
%We also give a general result from the second author's PhD thesis~ 
%\textit{Differential Field Arithmetic} 
%\cite{Me2} there stated for the theory $\DCF_0$ but which works in the %present totally transcendental setting.
%We continue in the same setting as above, with $T$ a totally transcendental theory with elimination of imaginaries, $\bar{M}$ a monster model, $A$ a small set of parameters and $M$ a copy of the prime model over $A$. 
Let $G$ be an $A$-definable group. 
%Let $P$ be an $A$-definable right torsor for $G$. 
%We call $P$ a (right $A$-)definable homogeneous space if the action is transitive. We call $P$ a (right $A$-)definable principal homogeneous space, or PHS, or torsor, if the action is strictly transitive, meaning that for any $b_1,b_2 \in P$, there is a unique $g \in G$ with $b_1g = b_2$. 
%A pair $(P,p)$ consisting of a PHS $P$ together with a specified point $p \in P$ is called a pointed PHS (for the group $G$).  
An $A$-definable isomorphism between $A$-definable right torsors for $G$ is an $A$-definable bijection that commutes with the right action.
%An isomorphism of pointed PHSs is an isomorphism of the PHSs preserving the basepoint.
%\begin{remark}
   % When considering definable Galois cohomology in a totally transcendental theory, one works with reference to a fixed copy, $M$, of the prime model. Practically, this is to make use of the atomicity and homogeneity properties of the prime model. If $P$ is a definable right PHS for $G$, then $P(M)$ is a right PHS for $G(M)$ abstractly. By a pointed PHS, $(P(M),p)$, we mean the $M$-points of a definable (in the sense of $T$ or $\bar{M}$) PHS $P$ together with a specified $p \in P(M)$.
%\end{remark}
We denote the set of $A$-definable isomorphism classes 
%(in $M$) 
of $A$-definable right torsors for $G$ by $\mathcal{P}_{\defin}(A,G)$.

We also recall the notion of definable cocycles from \cite{Pillay3}. We call a function $\varphi:\Aut(M/A) \to G(M)$ an $A$-definable cocycle if the following two conditions are satisfied:

\begin{enumerate}
    \item $\forall \sigma, \tau \in \Aut(M/A)$, $\varphi(\sigma\tau) = \varphi(\sigma)\sigma(\varphi(\tau))$
    \item there is a tuple $a$ from $M$ and an $A$-definable function $h(x,y)$ such that $h(a,\sigma(a)) = \varphi(\sigma)$ for all $\sigma \in \Aut(M/A)$.
\end{enumerate}

Furthermore, two cocycles $\varphi$ and $\psi$ are said to be cohomologous if there is $g \in G(M)$ such that for all $\sigma \in \Aut(M/A)$ we have $\varphi(\sigma) = g^{-1}\psi(\sigma)\sigma(g)$. Cohomology is an equivalence relation on cocycles. The constant cocycle takes all of $\Aut(M/A)$ to the identity of $G(M)$. We call a definable cocycle trivial if it is cohomologous to the constant cocycle.

We denote the set of definable cocycles from $\Aut(M/A)$ to $G(M)$ by 
%either $\ZZ^1_{\defin}(M/A,G(M))$ or 
$\ZZ^1_{\defin}(A,G)$.
%with the constant cocycle as the basepoint. 
We denote the quotient of $\ZZ^1_{\defin}(A,G)$ by the cohomology relation by 
%$\HH^1_{\defin}(M/A,G(M))$ or, more compactly, 
$\HH^1_{\defin}(A,G)$, and call it the first (nonabelian) definable Galois cohomology set.

\begin{fact}\cite{Pillay3}
    There is a natural isomorphism 
    %of pointed sets 
    %$$\mathcal{P}_{\defin,*}(M/A,G(M)) \cong \ZZ^1_{\defin}(M/A,G(M))$$
    %which induces another natural isomorphism of pointed sets 
    $$\mathcal{P}_{\defin}(A,G) \cong \HH^1_{\defin}(A,G).$$ %by forgetting the basepoint on the left and taking cohomology on the right.
\end{fact}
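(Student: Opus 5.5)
The plan is to set up the two maps between definable torsors and definable cocycles directly and check that they are mutually inverse on equivalence classes.

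\emph{From torsors to cocycles.} Let $P$ be an $A$-definable right torsor for $G$. Since $T$ is totally transcendental, the prime model $M$ is atomic over $A$ and contains realisations of every isolated type over $A$. In particular $P$ has a point $p\in P(M)$. For $\sigma\in\Aut(M/A)$ we have $\sigma(p)\in P(M)$, so there is a unique $\varphi_p(\sigma)\in G(M)$ with $\sigma(p)=p\cdot\varphi_p(\sigma)$. This $\varphi_p$ is given by the $A$-definable function $h(x,y)$ which returns the unique $g$ with $y=xg$; hence $\varphi_p(\sigma)=h(p,\sigma(p))$ and condition (2) of a definable cocycle holds. The cocycle identity follows from
$$\sigma\tau(p)=\sigma\bigl(p\,\varphi_p(\tau)\bigr)=p\,\varphi_p(\sigma)\,\sigma(\varphi_p(\tau)).$$
Changing the base point to $p'=pg$ replaces $\varphi_p$ by $g^{-1}\varphi_p(\sigma)\sigma(g)$, which is cohomologous. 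An $A$-definable isomorphism $f\colon P\to P'$ sends $p$ to $f(p)$ and preserves the cocycle. So we get a well-defined map $\mathcal P_{\defin}(A,G)\to\HH^1_{\defin}(A,G)$.

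\emph{Injectivity.} Suppose $\varphi_p$ and $\varphi_{p'}$ are cohomologous. After moving the base point we may assume they are equal. Define $f\colon P\to P'$ by $f(pg)=p'g$. This map commutes with the right action and is fixed by every $\sigma\in\Aut(M/A)$, because
$$\sigma(p)\,\sigma(g)=p\,\varphi(\sigma)\,\sigma(g)\mapsto p'\,\varphi(\sigma)\,\sigma(g)=\sigma(p')\,\sigma(g).$$
It is defined over the tuple $(p,p')$ from $M$. Since $M$ is atomic and homogeneous over $A$, and $T$ eliminates imaginaries, a definable set with parameters in $M$ that is $\Aut(M/A)$-invariant is $A$-definable. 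This follows by the standard argument: take the code of $f$; it lies in $\dcl^{eq}(M)$ and is fixed by $\Aut(M/A)$; homogeneity of the atomic model gives that it lies in $\dcl(A)$.

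\emph{Surjectivity, the main obstacle.} Given a cocycle $\varphi(\sigma)=h(a,\sigma(a))$ with $a\in M$, let $q=\tp(a/A)$, which is isolated. Let $Q$ be its set of realisations, an $A$-definable set. The idea is to build the torsor as a quotient of $Q\times G$ by the relation
$$(b,g)\sim(b',g')\iff g'=h(b',b)\,g.$$
The real issue is to check that $h$ satisfies a cocycle identity uniformly on $Q$: for all $b,b',b''\in Q$,
$$h(b,b'')=h(b,b')\,h(b',b'').$$
I would establish this by homogeneity. Every triple from $Q$ in $M$ realising the right type is an $\Aut(M/A)$-image of a triple of the form $(a,\sigma(a),\sigma\tau(a))$. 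For such a triple the identity follows by applying the cocycle condition after translating with $\sigma$. Since $q$ is isolated by a formula, one can shrink that formula to force the identity (and $h(b,b)=e$) on all of $Q$.

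Once this is in place, $\sim$ is an $A$-definable equivalence relation. Elimination of imaginaries makes $P=(Q\times G)/\!\sim$ an $A$-definable set with right $G$-action $[b,g]\cdot k=[b,gk]$; it is a torsor because each class has a unique representative over a fixed $b$. Taking $p=[a,e]$, we get
$$\sigma(p)=[\sigma(a),e]=[a,h(a,\sigma(a))]=p\,\varphi(\sigma),$$
so $P$ recovers $\varphi$. Naturality in $G$ is then immediate from the constructions.
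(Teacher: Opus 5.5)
Your proposal is correct and follows essentially the standard argument of Pillay that the paper cites: cocycles are read off from a base point of the torsor, and conversely a torsor is built as the quotient $(Q\times G)/\!\sim$ from part (3) of Remark~\ref{rmk: subtorsor}, using the uniform identity $h(b,c)h(c,d)=h(b,d)$ on realizations of the isolating formula (as in the remark of Section~\ref{sec:parKoltheorem}). One small point: no shrinking of the isolating formula is needed, since homogeneity of $M$ over $A$ gives the identity for every triple in $Q(M)^3$, and $M\prec\U$ then transfers it to all of $Q^3$.
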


\

We now recall the long exact sequence in definable cohomology from 
%\textit{More on Galois cohomology, definability and differential algebraic groups} 
\cite{LSMP}.
%, not requiring a normal subgroup. 
This exact sequence is a primary tool for computing definable Galois cohomology. But first let us recall that given an $A$-definable subgroup $H\leq G$, with $H$ not necessarily normal in $G$, by elimination of imaginaries the pointed set of left cosets of $H$ in $G$, $G/H$, can be identified with an $A$-definable set and the quotient map $G\to G/H$ is $A$-definable.

\begin{fact}\cite{LSMP}\label{fact: LES}
    Let $H$ be an $A$-definable subgroup of $G$. This induces an exact sequence in definable Galois cohomology:

    \vspace{-4mm}

    $$1 \to H(A) \to G(A) \to (G/H)(A) \xrightarrow{\delta^1} \HH^1_{\defin}(A,H) \xrightarrow{\iota^1} \HH^1_{\defin}(A,G)$$
\end{fact}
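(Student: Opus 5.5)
The plan is to work entirely at the level of definable cocycles, using the natural isomorphism $\mathcal{P}_{\defin}(A,G)\cong \HH^1_{\defin}(A,G)$ of \cite{Pillay3} only as a guide. First I define the maps. The maps $H(A)\to G(A)\to (G/H)(A)$ are the inclusion and the restriction of the $A$-definable quotient map $\pi:G\to G/H$. The map $\iota^1$ sends the class of a cocycle $\varphi\in \ZZ^1_{\defin}(A,H)$ to its class in $\HH^1_{\defin}(A,G)$ via $H(M)\subseteq G(M)$. This is well defined, since the same pair $(a,h)$ witnessing definability works, and cohomologous cocycles in $H$ stay cohomologous in $G$.

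For $\delta^1$, take $c\in (G/H)(A)$. The fibre $\pi^{-1}(c)$ is a nonempty $A$-definable set, so, because $M$ is prime (hence atomic) over $A$ in a totally transcendental theory, it has a point $g\in G(M)$. I set
$$\delta^1(c)=[\sigma\mapsto g^{-1}\sigma(g)].$$
Since $\sigma(g)\in \pi^{-1}(c)=gH$, the values lie in $H(M)$. The cocycle identity is the standard telescoping computation. Definability holds with the tuple $a=g$ and the $A$-definable function $h(x,y)=x^{-1}y$. Replacing $g$ by $gk$ with $k\in H(M)$ changes the cocycle to $k^{-1}(g^{-1}\sigma(g))\sigma(k)$, which is cohomologous, so $\delta^1$ is well defined.

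Next I need one key auxiliary fact: a tuple $d\in M$ fixed by every $\sigma\in\Aut(M/A)$ lies in $\dcl(A)$. The argument goes as follows. The type $tp(d/A)$ is isolated because $M$ is atomic, and $M$ is homogeneous over $A$. If $tp(d/A)$ had a second realization in $\U$, then either it is algebraic, in which case all its finitely many realizations lie in $M$ (since $\acl(A)\subseteq M$), or it is non-algebraic, in which case its isolating formula has infinitely many realizations in $M$. In both cases homogeneity gives some $\sigma\in\Aut(M/A)$ moving $d$, a contradiction. I expect this step, together with the existence of $M$-points on $A$-definable sets, to be the only place where genuine model theory is used, so it is the main obstacle. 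The exactness checks themselves are formal.

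Finally I check exactness as pointed sets, one term at a time.
\begin{enumerate}
\item At $H(A)$ and $G(A)$: injectivity is clear, and $\pi(g)$ is the trivial coset iff $g\in H$.
\item At $(G/H)(A)$: if $c=\pi(g)$ with $g\in G(A)$, the cocycle $g^{-1}\sigma(g)$ is constant $1$. Conversely, suppose $g^{-1}\sigma(g)=k^{-1}\sigma(k)$ for some $k\in H(M)$ and all $\sigma$. Then $gk^{-1}$ is fixed by $\Aut(M/A)$, so by the auxiliary fact it lies in $G(\dcl(A))=G(A)$, and $\pi(gk^{-1})=c$.
\item At $\HH^1_{\defin}(A,H)$: the composite $\iota^1\delta^1$ is trivial by construction. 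Conversely, suppose $\varphi\in\ZZ^1_{\defin}(A,H)$ becomes trivial in $G$, say $\varphi(\sigma)=g^{-1}\sigma(g)$ with $g\in G(M)$. Then $\sigma(g)=g\varphi(\sigma)\in gH$, so $\sigma(\pi(g))=\pi(g)$ for all $\sigma$. By the auxiliary fact (applied to the code of $gH$, available by elimination of imaginaries), $c:=\pi(g)\in (G/H)(A)$, and $\delta^1(c)=[\varphi]$.
\end{enumerate}
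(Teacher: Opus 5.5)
Your proof is correct and complete. The paper gives no argument of its own for this Fact; it is imported from \cite{LSMP}. What you wrote is the classical nonabelian-cohomology argument run on definable cocycles, and you correctly isolate the only two model-theoretic inputs: $A$-definable sets have points in $M$, and the $\Aut(M/A)$-fixed elements of $M$ are exactly those of $\dcl(A)=A$.

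Your maps also agree with the torsor-level descriptions the paper uses later. For $g\in G(M)$ with $\pi(g)=c$, take the right $H$-torsor $\pi^{-1}(c)=gH$ with base point $g$. The function Pillay's correspondence attaches to it is $h(x,y)=x^{-1}y$, since $g\cdot g^{-1}\sigma(g)=\sigma(g)$. So your $\delta^1(c)$ is the class of that torsor, as in Remark~\ref{rmk: subtorsor}(1), which is what the moreover clause of Theorem~\ref{thm: cohomological fact} relies on. Your $\iota^1$ is Remark~\ref{rmk: subtorsor}(2).

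Three minor points:
\begin{enumerate}
\item The existence of $g\in G(M)$ with $\pi(g)=c$ uses only $\dcl(A)\subseteq M\preceq\U$, not atomicity.
\item In the auxiliary fact, the algebraic/non-algebraic split is unnecessary. If the formula isolating $tp(d/A)$ has two solutions in $\U$, it has two in $M$ by elementarity, and both realize $tp(d/A)$.
\item The step that genuinely needs justification is the homogeneity of $M$ over $A$: tuples of $M$ with the same type over $A$ are conjugate under $\Aut(M/A)$. This is standard for t.t.\ theories, coming from the fact that $M$ is also prime over $A\cup\{d\}$ together with uniqueness of prime models. It is the same fact that underlies Pillay's torsor--cocycle correspondence in \cite{Pillay3}, so cite it rather than assert it.
\end{enumerate}
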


\begin{remark}\label{rmk: subtorsor} We make a few comments on the maps $\delta^1$ and $\iota^1$. See \cite{LSMP}, \cite{Me}, \cite{Me2}, and \cite{Pillay3} for more details.
    \begin{enumerate}
        \item The connecting map $\delta^1$ sends an $A$-definable coset $gH$ of $H$ in $G$ to the cohomology class associated to $gH$ considered as a right $A$-definable torsor for $H$.
        \item As $H(M) \subseteq G(M)$, a definable cocycle valued in $H(M)$ can also be seen as valued in $G(M)$ and this identification induces $\iota^1$ at the level of cocycles. 
        \item Let $V$ be an $A$-definable right torsor for $H$. Let $a \in V(M)$ and $h(x,y)$ be the associated $A$-definable function satisfying $a\cdot h(a,\sigma(a)) = \sigma(a)$ for any $\sigma \in \Aut(M/A)$. Let $Z$ be the set of realizations of the isolated type $\tp(a/A)$. Under $\iota^1$, the class of $V$ is sent to the class of $Z\times G/ \sim$ where $(a_1,g_1) \sim (a_2,g_2)$ iff $h(a_1,a_2)g_2 = g_1$ which is an $A$-definable right torsor for $G$ (under then natural action $(a,g_1)\cdot g=(a_1,g_1\, g)$).
        \item Let $V\circlearrowright H$ be an $A$-definable sub-torsor for $W\circlearrowright G$. That is, $V \subseteq W$ and $H \leq G$ and the action of $H$ on $V$ is the restriction of the action of $G$ on $W$. Then, under $\iota^1$, the class of $V$ is sent to that of $W$. Indeed, by~(3), the class of $V$ is sent to $Z\times G/ \sim$ where $Z$ is the set of realizations of the isolated type $\tp(a/A)$ for $a \in V(M)$. Now, using that $a\in W$, consider the map $\rho:Z\times G\to W$ given simply by $(a,g)\mapsto a\cdot g$. Clearly $\rho$ is onto. One can check that $\rho$ preserves the equivalence relation $\sim$ and hence induces an $A$-isomorphism between $Z\times G/\sim$ and $W$. That this construction recovers $W$ is exactly the proof of the cocycle-torsor correspondence originally from~\cite{Pillay3}. 
    \end{enumerate}
\end{remark}

Now we include some material from the second author's PhD thesis \cite{Me2}, which appears there specialized to $\DCF_0$ but which works at the present level of generality. The first part of the following definition is due to Pillay and Sokolovic from 
%their paper \textit{Superstable differential fields} 
\cite{PSokolovic}. This definition is a model-theoretic generalization of Kolchin's $G$- and $V$-primitive elements, respectively.

\begin{definition}
    Let $H$ be an $A$-definable subgroup of the $A$-definable group $G$. 
    \begin{enumerate}
        \item \cite{PSokolovic} Let $\pi:G \to G/H$ be the $A$-definable quotient map. An element $a\in G$ is said to be a $(G,H)$-primitive over $A$ if $\pi(a) \in (G/H)(A)$; namely, $\pi(a)$ is an $A$-point.
        \item \cite{Me2} Let $W$ be an $A$-definable right torsor for $G$. As $H$ is a subgroup of $G$, $H$ acts on $W$ and the orbits of the action are the classes of an $A$-definable equivalence relation. Let $\pi:W \to W/H$ be the $A$-definable quotient map (given by elimination of imaginaries). An element $a\in W$ is said to be a $(W,G,H)$-primitive over $A$ if $\pi(a) \in (W/H)(A)$.
    \end{enumerate}
\end{definition}

We conclude this section with the following theorem which gives the relationship between $A$-definable torsors and model-theoretic primitives (as in the Definition above). Part of the proof is adapted from \cite[\S 3.3]{Me2}. 

%\begin{lemma}\label{lem: mod prim}
%    Let $X$ be a a right $A$-definable PHS for a $A$-definable group $H$. Let $H$ embed via a $A$-definable homomorphism $\iota$ in a $A$-definable group $G$.
    
%    If the definable Galois cohomology of $G$ is trivial,  i.e. $X$ is $A$-definably isomorphic to an $A$-definable left coset of $H$ in $G$.
    
%    Otherwise, there is some nontrivial PHS $W$ for $G$ such that $X$ is $A$-definably isomorphic to the preimage $\pi^{-1}(\pi(b)) \subseteq W(M)$ for some $(W,G,H)$-primitive element $b \in W(M)$ such that $\pi(\bar{b}) \in (W/H)(A)$, i.e $X$ is $A$-definably isomorphic to an orbit of $H$ acting on $W$.
%\end{lemma}

%\begin{proof}
    
%\end{proof}

\begin{theorem}\label{thm: cohomological fact} Let $\U$ be a monster model of a totally transcendental theory $T$ that eliminates imaginaries, and let $A$ be a (small) set of parameters.
\begin{enumerate}
    \item Let $Q$ be an $A$-definable right torsor for an $A$-definable group $H$, and let $b \in Q(\U)$. Then, $B=\dcl(A,b)$ is a Galois extension of $A$ relative to $H$ (in the sense above) if and only if $H(B)= H(A)$.
    
    \item Let $B/A$ be a Galois extension (in the sense above); namely, $B=\dcl(A,b)$ with $q=tp(b/A)$ strongly internal and weakly orthogonal to some $A$-definable set. Let $Q \circlearrowright H$ be the associated $A$-definable right torsor given by the torsor theorem (i.e., part (ii) of Fact~\ref{fact: torsor theorem}). Suppose $H$ is subgroup of an $A$-definable group $G$. Then, there is an $A$-definable right torsor $W$ for $G$ such that $Q$ is $A$-definably isomorphic to an orbit of $H$ on $W$ and the Galois extension $B/A$ can be generated by a $(W,G,H)$-primitive over $A$.
        
        Moreover, the image of the class associated to the torsor $Q$ under the natural map 
        $$\HH^1_{\defin}(A,H) \xrightarrow{\iota^1} \HH^1_{\defin}(A,G)$$ is trivial if and only if $Q$ is $A$-definably isomorphic to an $A$-definable left coset of $H$ in $G$. And, in this case, the Galois extension $B/A$ can be generated by a $(G,H)$-primitive. This happens in particular when $\HH^1_{\defin}(A,G) = 1$.
\end{enumerate}
\end{theorem}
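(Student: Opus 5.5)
For part (1), I take the forward direction first. If $B=\dcl(A,b)$ is a Galois extension relative to $H$, then $tp(b/A)$ is weakly orthogonal to $H$. By Fact~\ref{weak orth fact}, $H(\dcl(A,b))=H(A)$, that is, $H(B)=H(A)$.

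For the converse, assume $H(B)=H(A)$. Weak orthogonality of $q=tp(b/A)$ to $H$ is again exactly Fact~\ref{weak orth fact}. For strong internality, let $b_1,b_2$ realize $q$. Both lie in $Q$, so there is a unique $h\in H$ with $b_2=b_1 h$. This $h$ is $\dcl(A,b_1,b_2)$-definable, and $b_2=b_1\cdot h$ lies in $\dcl(A,b_1,H)$. Hence $q$ is strongly internal to $H$, and the equivalence follows directly.

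For part (2), the plan is to push the class of $Q$ along $\iota^1$ and use Remark~\ref{rmk: subtorsor}(3).

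\emph{Step 1: build $W$.} Take $b\in Q(M)$, which exists by Fact~\ref{fact: torsor theorem}(1). Let $h$ be the $A$-definable cocycle function with $b\cdot h(b,\sigma b)=\sigma b$, and let $Z$ be the realizations of $tp(b/A)$. Since $Q$ is already the set of realizations of $q$, we have $Z=Q$. Define $W=(Z\times G)/\sim$, which is an $A$-definable right $G$-torsor representing $\iota^1[Q]$.

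\emph{Step 2: embed $Q$ into $W$.} The map $Q\to W$, $b'\mapsto[(b',e)]$, is $A$-definable and injective. Its image is an $H$-orbit, because $[(b',e)]\cdot h'=[(b',h')]=[(b'h',e)]$. Indeed, $h(b',b'h')=h'$, so $(b',h')\sim(b'h',e)$. So $Q$ is $A$-isomorphic to an $H$-orbit $\pi^{-1}(c)$ in $W$.

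\emph{Step 3: primitivity.} This orbit is $\Aut(\U/A)$-invariant, so $c=\pi([(b,e)])$ is an $A$-point of $W/H$. Since $[(b,e)]$ and $b$ are interdefinable over $A$, the element $a=[(b,e)]$ is a $(W,G,H)$-primitive generating $B$.

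For the moreover clause, first suppose $\iota^1[Q]$ is trivial. Then $W$ is $A$-isomorphic to $G$, and transporting the orbit gives an $A$-definable coset $gH$ with $Q\cong gH$. Conversely, if $Q\cong gH$ with $gH$ an $A$-definable left coset of $H$ in $G$, then $gH\subseteq G$ is an $A$-definable sub-torsor of $G\circlearrowright G$. By Remark~\ref{rmk: subtorsor}(4), $\iota^1[Q]=[G]$ is trivial. In that case the image of $b$ in $gH\subseteq G$ is a $(G,H)$-primitive, since its class in $G/H$ is the $A$-point $gH$. When $\HH^1_{\defin}(A,G)=1$ the hypothesis holds automatically.

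I expect the main obstacle to be the bookkeeping in Step 2. One must check that the equivalence relation $\sim$ is compatible with the $H$-action and that the embedding is $A$-definable rather than merely $A\cup\{b\}$-definable. The fact that $Z=Q$ and that $h$ is $A$-definable is what handles both points.
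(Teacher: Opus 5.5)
Your proposal is correct and follows the paper's proof. It uses the same torsor $W=(Q\times G)/\sim$ from Remark~\ref{rmk: subtorsor}(3), the same orbit $[(b,e)]\cdot H$ and the same primitive $[(b,e)]$; your map $b'\mapsto[(b',e)]$ is the paper's $bh\mapsto[(b,h)]$ written in a manifestly $A$-definable form, which spares you the paper's invariance check. The only variation is in the moreover clause, where you transport the orbit along an $A$-isomorphism $W\cong G$ and use Remark~\ref{rmk: subtorsor}(4) for the converse, whereas the paper uses exactness ($\ker\iota^1$ equals the image of $\delta^1$) from Fact~\ref{fact: LES} together with Remark~\ref{rmk: subtorsor}(1); both routes are fine.
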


\begin{proof}
(1)  Since the action of $H$ on $Q$ is transitive we have that: for any $b_1,b_2\in Q(\U)$, there is $h\in H$ with $b_2=b_1\cdot h$, yielding that $q=tp(b/A)$ is strongly internal to $H$. That $q$ is weakly orthogonal to $X$ is just Fact \ref{weak orth fact}. We note that $H$ will not necessarily be the Galois group of $B$.

    %We may replace $H$ in notation and let $(Q,H)$ be the associated $A$-definable PHS given by the torsor theorem, Fact \ref{fact: torsor theorem}. 
    
    %We apply the long exact sequence in definable Galois cohomology induced by the short exact sequence of pointed sets $1 \to H \to G \to G/H \to 1$, Fact \ref{fact: LES}. Note $H$ is not necessarily a normal subgroup. 
    (2) By Fact~\ref{fact: LES}, we have the exact sequence in cohomology:
     \begin{align*}
        1  \to H(A) &\to G(A) \to (G/H)(A) \xrightarrow{\delta^1} 
         \HH^1_{\defin}(A,H) \xrightarrow{\iota^1} \HH^1_{\defin}(A,G).
    \end{align*}
    %where for $d \in (G/H)(A)$, the map $\delta^1(d) := \pi^{-1}(d) \subseteq G$. 
    Let $W$ an $A$-definable (right) torsor for $G$ representing the image under $\iota^1$ of the class of $Q$. By part (3) of Remark~\ref{rmk: subtorsor}, we may assume that $W=Q\times G/\sim$ where $(b_1,g_1)\sim(b_2,g_2)$ iff $h(b_1,b_2)g_2=g_1$. Let $[(b,e)]$ denote the class of $(b,e)\in Z\times G$ under $\sim$ where $e$ is the identity of $G$. Note that $H$ acts on $W$ by $[(b_1,g_1)]\cdot h=[(b_1,g_1\, h)]$. We then denote the orbit of $[(b_1,g_1)]$ under $H$ by $[(b_1,g_1)]\cdot H$. 

    %Let $\iota^1$ sends the class of $Q$ to a class in $\HH^1_{\defin}(A,G)$. Let $W$ be a PHS representing the image of the class of $Q$ under $\iota^1$. 

    %Recall how $\iota^1$ is defined from the PHS perspective. Namely we construct a PHS for $G$ as follows: Let $p \in Q(M)$. Let $P(M)$ be the realizations of $\tp(p/A)$ in $M$, noting that $M$ is atomic. The PHS is $W:= (P(M) \times G(M))/E$ where $E$ is the $A$-definable equivalence relation defined by $(p_1,g_1)E(p_2,g_2)$ if $\exists h \in H$  such that $p_1h = p_2$ and $hg_2 = g_1$.

\medskip

    \noindent {\bf Claim.} $Q$ is $A$-definably isomorphic to the orbit $[(b,e)]\cdot H \subseteq W$. 

\begin{proof}[Proof of Claim]
    First we note that $[(b,e)]\cdot H$ is $\Aut(\U/A)$-invariant and so $A$-definable. Indeed, let $[(b,h)] \in [(b,e)]\cdot H$ and let $\sigma \in \Aut(\U/A)$, then 
    $$\sigma([(b,h)]) = [(\sigma(b),\sigma(h))] = [(bh(b,\sigma(b)),\sigma(h))] = [(b,h(b,\sigma(b))\sigma(h))]\in [(b,e)]\cdot H,$$ 
    where the last equality uses the definition of the equivalence relation $\sim$. Note that in fact this shows that $[(b,e)]\cdot H=[(b_1,e)]\cdot H$ for any $b_1\in Q$.

    For any $x \in Q$, there is a unique $h\in H$ with $x = bh$. Define the map $Q\to [(b,e)]\cdot H$ by $x \mapsto [(b,h)]$. It is straightforward to check that this an $A$-definable bijection.
    
    %This maps has inverse $[(a,h)] \mapsto ah$. It is clear that the inverse is well defined and $A$-definable. The map $x = ph \mapsto [(p,h)]$ apriori is definable over $p$ but in fact it is $\Aut(M/A)$-invariant so $A$-definable.
    %This map is $A$-definable and bijective. 
    \end{proof}
    
    %So $Q$ is $A$-definably isomorphic to an $A$-definable orbit of $H$ on $W(M) := (P(M) \times G(M))/E$. That is, 
    
    It follows that $Q$ is $A$-definably isomorphic to the preimage $\pi^{-1}(\pi([(b,e)])) \subseteq W$ where $\pi:W\to W/H$ is the canonical $A$-definable quotient map. Hence $\pi([(b,e)])$ is an $A$-point of $W/H$; namely, $[(b,e)]$ is a $(W,G,H)$-primitive over $A$. Furthermore,  as $b$ and $[(b,e)]$ are $A$-interdefinable, we get that $B=\dcl(A,[(b,e)])$, as desired.

    For the moreover clause, note that the class of $Q$ is in the kernel of $\iota^1$ if and only if the class of $Q$ is in the image of $\delta^1$, in which case, by part (1) of Remark~\ref{rmk: subtorsor}, $Q$ is $A$-definably isomorphic to an $A$-definable left coset of $H$ in $G$. Let $c\in G$ be the image of $b$ under this $A$-definable isomorphism. In this case, $Q$ is $A$-definably isomorphic to the preimage $\pi^{-1}(\pi(c)) \subseteq G$, where now $\pi:G\to G/H$, and hence $c$ a $(G,H)$-primitive over $A$. Clearly, $b$ and $c$ are $A$-interdefinable.

    %Note of course that if $\HH^1(M,G) = 1$, then any PHS for $H(M)$ is in the image of the connecting map and must be isomorphic to a $A$-definable coset of $H$ in $G$.
\end{proof}

%\textcolor{red}{Do we want to put a comment saying that the above theorem can be seen as a model-theoretic generalization of the main result?}

%{\color{blue}OLS: how about the following: }

The above theorem is a model-theoretic version of our main result (Theorem~\ref{thm:mainresult}) at the level of generality of definable cohomology in totally transcendental theories. We will use it in the proof of the main theorem specialised to the theory DCF$_{0,m}$ where we give a stronger condition in part (2) using the parameterized version of Kolchin's cohomology theorem (see Theorem~\ref{thm: param kolchin}).

%we note however that in part (b) we use Kolchin's parameterize version to give a condition in terms of definable cohomology of the related theory

\

\section{Parameterized D-torsors and their differential Galois extensions}\label{sec:defDvar}

We now work in a sufficiently saturated model $(\U, \Pi)$ of $\DCF_{0,m}$ and we fix a ground $\Pi$-subfield $K$. All $\Pi$-varieties are identified with their set of $\U$-points. Let us start by recalling Kolchin's notions of $\Pi$-type and typical $\Pi$-dimension. Let $a$ be an $n$-tuple from $\U$. From \cite[\S II.12]{Kolchin1}, there exists a numerical polynomial $\omega_{a/K}$, called the Kolchin polynomial of $a$ over $K$ with the following properties:

\begin{itemize}
\item [(i)] For sufficiently large $h\in \mathbb N$, $\omega_{a/K}(h)$ equals the transcendence degree of
$$K(\theta a_i:i\leq n, \theta\in \Theta_\Pi \text{ with } \operatorname{ord}\theta\leq h)$$
over $K$. Here $\Theta_\Pi$ denotes the set of $\Pi$-derivatives.
\item [(ii)] $\deg \omega_{a/K}\leq |\Pi|$.
\item [(iii)] Writing $\omega_{a/K}(x)=\sum_{i}d_i\binom{x+i}{i}$ where $d_i\in \mathbb Z$, then $d_{|\Pi|}$ equals the $\Pi$-transcendence degree of $K\langle a\rangle_\Pi$ over $K$.
\item [(iv)] If $b$ is a (finite) tuple from $K\langle a\rangle_\Pi$, then there exists $h_0\in \mathbb N$ such that for sufficiently large $h\in \mathbb N$ we have $\omega_{b/K}(h)\leq \omega_{a/K}(h+h_0)$.
\end{itemize}

From (iv) we have that the degree of $\omega_{a/K}$ is a $\Pi$-birational invariant and we call this degree the $\Pi$-type of $a$ over $K$, denoted by $\Pi$-type$(a/K)$. Furthermore, again from (iv), we have that writing $\omega_{a/K}(x)=\sum_{i}^{\tau}d_i\binom{x+i}{i}$ where $d_i\in \mathbb Z$ and $\tau=\Pi$-type$(a/K)$, the coefficient $d_\tau$ is also a $\Pi$-birational invariant which is called the typical $\Pi$-dimension of $a$ over $K$ and denoted $\Pi$-dim$(a/K)$.

For $\D\subseteq \Pi$, one says that $\D$ \emph{bounds} the $\Pi$-type$(a/K)$ if the $\Pi$-field $K\langle a\rangle_\Pi$ is finitely $\D$-generated over $K$ (i.e., there is a finite tuple $\alpha$ such that $K\langle a\rangle_\Pi=K\langle \alpha\rangle_\D$). If in addition $|\D|=\Pi$-type$(a/K)$, we say that $\D$ \emph{witnesses} the $\Pi$-type$(a/K)$. These notions were introduced in \cite[\S1]{LS1}.  In his first book, Kolchin proved the following fundamental result on finitely generated differential extensions: it essentially says that one can always find some $\D$ witnessing the $\Pi$-type.

\begin{fact}\label{Koltype}\cite[\S II.13]{Kolchin1}
    Let $a$ be a (finite) tuple from $\U$. Then, after a $GL_m(\mathbb Q)$-transformation of $\Pi$, there exists $\D\subseteq \Pi$ witnessing the $\Pi$-type$(a/K)$. Namely, $|\D|=\Pi$-type$(a/K)$ and a there exists a (finite) tuple $\alpha$ from $\U$ such that
    $$K\langle a\rangle_\Pi=K\langle \alpha\rangle_\D.$$
    %and $\Pi$-type$(a/K)=\D$-type$(\alpha/K)$ and $\Pi$-dim$(a/K)=\D$-dim$(\alpha/K)$.
Furthermore, the $\D$-transcendence degree of $K\langle \alpha\rangle_\D$ over $K$ is $\Pi$-dim$(a/K)$. 
\end{fact}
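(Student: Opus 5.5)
The plan is to argue by descending induction on the number of derivations, removing one derivation at a time after a generic $\mathbb Q$-linear change of $\Pi$. The base of the induction is when $|\Pi|=\Pi$-type$(a/K)$; there we take $\D=\Pi$ and $\alpha=a$, and the last claim is just property (iii) of the Kolchin polynomial. So suppose $\tau:=\Pi$-type$(a/K)<m$.

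\emph{Step 1: every coordinate satisfies a $D_m$-dominated equation.} Since $\deg\omega_{a/K}<m$, the $\Pi$-transcendence degree of $K\langle a\rangle_\Pi$ is $0$, so each $a_i$ is $\Pi$-algebraic over $K$. Pick a nonzero differential polynomial $f_i$ over $K$ with $f_i(a_i)=0$, of minimal order $h$. Now perform a change of derivations $D'_j=\sum_k c_{jk}D_k$ with $(c_{jk})\in GL_m(\mathbb Q)$, chosen generically. We have to show that, after this change, the order-$h$ part of $f_i$ contains $(D'_m)^h y$ with a nonvanishing coefficient. This reduces to a statement about homogeneous polynomials. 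The derivatives of order $h$ are indexed by monomials in $D_1,\dots,D_m$. Under the substitution, the coefficient of $(D'_m)^h$ in the top-order part is a nonzero polynomial in the entries $c_{jk}$, so it is nonzero for some rational choice because $\mathbb Q$ is infinite. We make one choice of $c$ that works simultaneously for all $i\le n$, which is possible because a finite union of proper Zariski-closed sets misses some rational point.

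\emph{Step 2: eliminate $D'_m$.} Fix a ranking in which $D'_m$-order dominates. Since $f_i$ has minimal order, its separant does not vanish at $a_i$. Applying $D'_m$ once more to $f_i(a_i)=0$ then expresses $(D'_m)^{h+1}a_i$ as a rational function, over $K$, of derivatives of $a_i$ of lower $D'_m$-order. Iterating this, and applying $\Pi'=\{D'_1,\dots,D'_{m-1}\}$, gives
$$K\langle a\rangle_\Pi=K\langle a,D'_ma,\dots,D'_m{}^{h}a\rangle_{\Pi'}.$$
Hence $\Pi'$ bounds the $\Pi$-type. The new tuple $a'=(a,D'_ma,\dots)$ is a finite tuple generating the same field over $\Pi'$. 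Using (iv) in both directions, its $\Pi'$-Kolchin polynomial has degree $\tau$ and the same leading coefficient as $\omega_{a/K}$: $\theta$-derivatives of order $\le h$ in $\Pi$ and in $\Pi'\cup\{D'_m\}$ sandwich one another up to shifts. We then apply the induction hypothesis to $(a',\Pi')$, and compose the linear changes, which stay in $GL_m(\mathbb Q)$ since we act trivially on $D'_m$.

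\emph{Step 3: the stopping point.} The induction terminates at a set $\D$ for which the type of the generator equals $|\D|$. Type never drops below $\tau$, since it is a birational invariant of the field $K\langle a\rangle_\Pi$. Also, Step 1 can be applied as long as the type is less than the number of derivations. Hence we stop exactly at $|\D|=\tau$, with $K\langle a\rangle_\Pi=K\langle\alpha\rangle_\D$. At that point the leading coefficient of $\omega^{\D}_{\alpha/K}$ is the $\D$-transcendence degree, by (iii) applied to $\D$, and Step 2 showed this coefficient is preserved, so it equals $\Pi$-dim$(a/K)$.

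The main obstacle is the bookkeeping in Step 2: comparing Kolchin polynomials computed with respect to different sets of derivations, to show that $\D$-type and typical $\D$-dimension agree with the $\Pi$ invariants. I would handle this by explicit order counts. A $\Pi$-derivative of order $\le h$ of $a$ is a $\Pi'$-derivative of order $\le h+c$ of $a'$, where $c$ is a constant, and conversely. Comparing $\binom{h+\tau}{\tau}$-growth rates then forces equality of the leading coefficients.
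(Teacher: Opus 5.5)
\paragraph{Gap in Step 2.} The paper does not prove this Fact; it quotes it from Kolchin. Your strategy is sound in outline: remove one derivation at a time after a generic $\mathbb Q$-rational change of basis, eliminate the last derivation using a differential polynomial that annihilates each $a_i$, then compare Kolchin polynomials. However, Step 2 rests on a false claim. You say that because $f_i$ has minimal order, its separant does not vanish at $a_i$. Minimal order is not enough.

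Take $m=2$, let $a$ be transcendental over $K$ with $D_1a=0$, and let $f=(D_1y)^2$, which has the minimal order $1$. Consider any rational change of basis with $D_1=b_{11}D_1'+b_{12}D_2'$ and $b_{12}\neq 0$. Then $D_2'y$ does occur in $f$ with a nonzero coefficient, so your Step 1 condition is met. Yet $\partial f/\partial(D_2'y)=2b_{12}D_1y$ vanishes at $a$. Applying $D_2'$ to $f(a)=0$ gives only $0=0$, so nothing expresses $(D_2')^2a$.

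The repair is to choose $f_i$ more carefully. Among nonzero polynomials in the $\Pi$-derivatives of $y$ of order at most $h$ that vanish at $a_i$, take $f_i$ of minimal total degree. Passing to $\Pi'$-derivatives of order $\le h$ is an invertible, degree-preserving linear substitution. So $S_i=\partial f_i/\partial((D_m')^hy)$ is nonzero by your generic choice, has order $\le h$ and degree $<\deg f_i$, and hence $S_i(a_i)\neq 0$ by minimality. After that your elimination goes through.

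\paragraph{Comparing Kolchin polynomials.} A smaller point concerns the comparison of Kolchin polynomials in Step 2. Property (iv) and the birational invariance of type both refer to one fixed set of derivations. So they cannot be quoted to compare $\omega^{\Pi}_{a/K}$ with the Kolchin polynomial of $a'$ relative to $\Pi''=\{D_1',\dots,D_{m-1}'\}$.

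What you need is the explicit sandwich $F_s\subseteq E_s\subseteq F_{s+h}$. Here $F_s$ is generated over $K$ by the $\Pi$-derivatives of $a$ of order $\le s$, and $E_s$ by the $\Pi''$-derivatives of $a'$ of order $\le s$. The second inclusion is immediate. The first needs the observation that the rewriting never raises total order. For $\psi=\theta(D_m')^{h+1}$, the relation $\theta(D_m'f_i)(a_i)=0$ expresses $S_i(a_i)\,\psi a_i$ through derivatives that either have order less than $\operatorname{ord}\psi$, or have the same order but smaller $D_m'$-exponent. So induction on order, and then on $D_m'$-exponent, places every $\Pi$-derivative of order $\le s$ in $E_s$.

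The sandwich gives the same degree $\tau$ and the same leading coefficient. The rest of your argument is fine: the induction, the composition of block matrices in $GL_m(\mathbb Q)$, and reading off the $\Delta$-transcendence degree via (iii).
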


We now consider a (disjoint) partition $\Pi=\DD\cup\D$ with $\DD=\{D_1,\dots,D_r\}$ nonempty and $\D=\{\d_1,\dots,\d_\ell\}$ (we will think of $\D$ as the parametric derivations). We will denote by $K^\D$ the $\D$-constants of $K$ and by $\Theta_\D$ the set of $\D$-derivatives (namely, operators of the form $\d_1^{e_1}\cdots\d_\ell^{e_\ell}$ for $e_i\geq 0$). 

Let us recall some of the results from \cite{LS1} on parameterized D-varieties with respect to the partition $\DD/\D$ (note that in that paper the terminology ``relative" D-variety was used, but here we use the more common term ``parameterized"). From~\cite[\S3]{LS1}, there is a functor $\tau_{\DD/\D}$ from the category of $\D$-varieties over $K$ to itself with the following three properties:

\begin{enumerate}
\item If $V$ is a $\D$-variety defined over $K^\D$, then $\tau_{\DD/\D}V$ is equal to Kolchin's $\D$-tangent bundle $T_\D V$, see \cite[\S VIII.2]{Kolchin2}. %In general $\tau_{\DD/\D} V$ is a $T_\D V$-torsor.
\item $\tau_{\DD/\D}$ commutes with products; namely, $\tau_{\DD/\D}(V\times W)$ is naturally isomorphic to $\tau_{\DD/\D}V\times \tau_{\DD/\D}W$.
\item For affine $V\subseteq \U^n$, the $\DD$-morphism $\nabla_\DD:\U^n\to \U^{n(r+1)}$ given by 
$$\nabla_\DD(x):= (x,D_1(x),\dots,D_r(x))$$
restricts to $\nabla^V_{\DD}:V \to \tau_{\DD/\D}V$. Moreover, if $f:V\to W$ is a $\D$-morphism defined over $K$, then $\nabla^W_{\DD}\circ f=\tau_{\DD/\D}f\circ \nabla^V_{\DD}$.
\end{enumerate}

In an affine chart of a $\D$-variety $V$ over $K$, one can give precise equations for $\tau_{\DD/\D}V$ as follows. Suppose $V\subseteq \U ^n$ is affine and let $x$ and $u$ be $n$-tuples of indeterminates; for each $D\in \DD$ and $f\in K\{x\}_\D$ define $d_{D/\D} f\in K\{x,u\}_\D$ as
$$d_{D/\D} f(x,u):= \sum_{\theta\in \Theta_\D, i\leq n}\frac{\partial f}{\partial (\theta x_i)}(x)\; \theta u_i + f^{D}(x)$$
Then, $\tau_{\DD/\D}V$ is a $\D$-closed subset of $\U^{n(r+1)}$ defined by 
$$f(x)=0, \; d_{D_1/\D}f(x,u_1)=0, \; \dots,\;  d_{D_r/\D}f(x,u_r)=0$$
where each $u_j$ is an $n$-tuple of indeterminates and $f$ varies in $\mathcal I(V/K)_\D$ the $\D$-vanishing ideal of $V$ over $K$. 

By a parameterized D-variety w.r.t $\DD/\D$ (aka parameterized $\DD/\D$-variety), one means a pair $(V,s)$ where $V$ is a $\D$-variety and $s=(\operatorname{Id},s_1,\dots,s_r)$ is a $\D$-section of the natural projection $\tau_{\DD/\D}V\to V$ satisfying the following integrability condition: for all $1\leq i<j\leq r$
$$d_{D_i/\D}s_j(a,s_i(a))=d_{D_j/\D}s_i(a,s_j(a))$$
for all $a\in V$. 
A parameterized D-subvariety of $(V,s)$ is a $\D$-subvariety $W\subseteq V$ such that $s(W)\subseteq \tau_{\DD/\D}W$. 
A $\D$-morphism $f:V\to W$ of parameterized $\DD/\D$-varieties $(V,s)$ and $(W,t)$ is said to be a $\DD/\D$-morphism if $\tau_{\DD/\D}(f)\circ s=t\circ f$.

Given a parameterized D-variety $(V,s)$ the $\#$-differential equation on $V$ is defined as
\begin{equation}\label{sharpeq}
\nabla^V_\DD(x)=s(x)
\end{equation}
and the set of $\#$-points of $(V,s)$ is the solution set of \eqref{sharpeq} and is denoted by $(V,s)^\#$. Note that $(V,s)^\#$ is a $\Pi$-variety, and for each $a\in V$ we have that $\D$ bounds the $\Pi$-type$(a/K)$.

For a $K$-irreducible $\Pi$-variety $X$, the $\Pi$-type and typical $\Pi$-dimension of $X$ over $K$ are defined as $\Pi$-type$(a/K)$ and $\Pi$-dim$(a/K)$, respectively, for any $\Pi$-generic point $a\in X$
 over $K$. Similarly, we say $\D$ bounds (respectively, witnesses) the $\Pi$-type of $X$ if it bounds (respectively, witnesses) $\Pi$-type$(a/K)$. From \cite{LS1} we have the following facts.

\begin{fact}\label{Dvarprop}\cite[Proposition 3.10 and 3.12]{LS1} Let $(V,s)$ be a parameterized $\DD/\D$-variety over $K$.
\begin{enumerate}
    \item $(V,s)^\#$ is $\D$-dense in $V$. Moreover, if $V$ is $K$-irreducible, there exists a $\D$-generic of $V$ (over $K$) inside $(V,s)^\#$ and any such point is in fact a $\Pi$-generic of $(V,s)^\#$ over $K$.
    \item Taking $\#$-points defines a 1:1 correspondence between parameterized D-subvarieties of $(V,s)$ over $K$ and $\Pi$-subvarieties of $(V,s)^\#$ over $K$ (where the inverse is given by taking $\D$-closures over $K$). 
    \item Suppose $X$ is a $K$-irreducible $\Pi$-variety and that $\D$ bounds the $\Pi$-type of $X$, then there exists a parameterized $\DD/\D$-variety $(W,t)$ over $K$ such that $X$ is $\Pi$-rationally equivalent (over $K$) to $(W,t)^\#$.
\end{enumerate}
\end{fact}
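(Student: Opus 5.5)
The plan is to prove the three parts in order, using (1) as the engine for (2) and (3). The main tool is the elementary observation that for any $x\in(V,s)^\#$ we have $D_j x=s_j(x)\in K\langle x\rangle_\D$, so that $K\langle x\rangle_\Pi=K\langle x\rangle_\D$. Hence $\Pi$-algebraic questions about $\#$-points reduce to $\D$-algebraic questions about the $\D$-locus.

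\emph{Part (1).} We may assume $V$ is affine and $K$-irreducible; density in general then follows by applying the argument to each $K$-irreducible component and to each $\D$-open subset. Let $F=K\langle a\rangle_\D$ be the $\D$-function field of $V$, with $a$ a $\D$-generic point. For each $D_j\in\DD$ define a derivation on $F$ extending $D_j|_K$ by setting $D_j(\theta a_i)=\theta(s_{j,i}(a))$ for $\theta\in\Theta_\D$. Using the formula for $d_{D_j/\D}f$, this is well defined on $F$ precisely because $s(a)\in\tau_{\DD/\D}V$, i.e.\ $d_{D_j/\D}f(a,s_j(a))=0$ for all $f\in\mathcal I(V/K)_\D$. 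By construction $D_j$ commutes with $\D$. The commutators $[D_i,D_j]$ are derivations vanishing on $K$, and they vanish on the generators $a$ exactly by the integrability condition $d_{D_i/\D}s_j(a,s_i(a))=d_{D_j/\D}s_i(a,s_j(a))$. So $(F,\Pi)$ is a $\Pi$-field extending $K$, and it embeds in $\U$ over $K$ since $\U$ is differentially closed and saturated. The image of $a$ is then a $\D$-generic of $V$ lying in $(V,s)^\#$.

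It remains to see that such a point is $\Pi$-generic in $(V,s)^\#$. For any $x\in(V,s)^\#$, $K\langle x\rangle_\Pi=K\langle x\rangle_\D$, and the $\Pi$-type of $x$ over $K$ is determined by its $\D$-locus, which is contained in $V$. A $\D$-generic point has the largest possible $\D$-locus. Consequently every $x\in(V,s)^\#$ is a $\Pi$-specialization of it: the $\Pi$-ideal of $a$ is generated by its $\D$-ideal together with $\nabla_\DD(x)-s(x)$. I expect the verification in this part to be the main obstacle, namely checking carefully that the $D_j$ are well defined and pairwise commute.

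\emph{Part (2).} Let $W$ be a parameterized D-subvariety. Then $W^\#=W\cap (V,s)^\#=(W,s|_W)^\#$ is $\D$-dense in $W$ by (1), so taking the $\D$-closure recovers $W$. Conversely, let $Y\subseteq(V,s)^\#$ be a $K$-irreducible $\Pi$-subvariety (the general case is handled componentwise), let $y$ be a $\Pi$-generic point of $Y$, and let $W$ be the $\D$-closure of $Y$, i.e.\ the $\D$-locus of $y$. Property (3) of $\tau_{\DD/\D}$ gives $s(y)=\nabla_\DD(y)\in\tau_{\DD/\D}W$. Since the condition $s(x)\in\tau_{\DD/\D}W$ is $\D$-closed in $x$ and $y$ is $\D$-generic in $W$, we get $s(W)\subseteq\tau_{\DD/\D}W$, so $W$ is a D-subvariety. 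Finally, $y$ is a $\D$-generic of $W$ lying in $W^\#$, hence by (1) it is $\Pi$-generic in $W^\#$. Since $Y\subseteq W^\#$ and both are irreducible with the same generic point, $Y=W^\#$.

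\emph{Part (3).} Let $x$ be a $\Pi$-generic point of $X$. Since $\D$ bounds the $\Pi$-type, $K\langle x\rangle_\Pi=K\langle\alpha\rangle_\D$ for some finite tuple $\alpha$, and we may enlarge $\alpha$ to include $x$. Then each $D_j\alpha$ lies in $K\langle\alpha\rangle_\D$, so $D_j\alpha=f_j(\alpha)/g(\alpha)$ for $\D$-polynomials $f_j,g$ over $K$. Let $W$ be the $\D$-locus of $\alpha$ intersected with $\{g\neq0\}$; this is an affine $\D$-variety, after adjoining $1/g$ as a coordinate. Set $t=(\operatorname{Id},f_1/g,\dots,f_r/g)$. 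We then check two things, both at the generic point $\alpha$ and transferred to all of $W$ by $\D$-genericity, as in (2). First, $t(\alpha)=\nabla_\DD(\alpha)\in\tau_{\DD/\D}W$, which follows by applying $D_j$ to elements of $\mathcal I(W/K)_\D$. Second, integrability of $t$ holds at $\alpha$ because the $D_i$ commute. Thus $(W,t)$ is a parameterized $\DD/\D$-variety and $\alpha$ is a $\D$-generic point of $W$ lying in $(W,t)^\#$. By (1), $\alpha$ is $\Pi$-generic in $(W,t)^\#$. Since $x$ and $\alpha$ are $\Pi$-interdefinable over $K$ by $\Pi$-rational maps, $X$ and $(W,t)^\#$ are $\Pi$-rationally equivalent over $K$.
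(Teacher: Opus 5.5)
The paper gives no argument of its own here and simply cites \cite[Propositions 3.10 and 3.12]{LS1}; your proof is correct and is the standard argument behind those results: make the $\D$-function field of $V$ a $\Pi$-field via $D_j a=s_j(a)$ (well defined because $s(a)\in\tau_{\DD/\D}V$, commuting by integrability), realize it in $\U$, reduce $\Pi$-conditions on $\#$-points to $\D$-conditions, and in (3) read $t$ off from $D_j\alpha\in K\langle\alpha\rangle_\D$. Two points should be made explicit: your reductions to the irreducible case in (1) and (2) tacitly use that the $K$-irreducible components of a parameterized $\DD/\D$-variety are again parameterized D-subvarieties (given $f$ vanishing on a component $V_1$, pick $g$ vanishing on the other components but not on $V_1$, and apply the Leibniz rule for $d_{D_j/\D}$ to $fg$ at a $\D$-generic point of $V_1$), and in (3) the section $t$ must also be specified on the adjoined coordinate $1/g$.
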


We now move on to parameterized D-groups and D-torsors with respect to the partition $\DD/\D$. As we pointed out above, $\tau_{\DD/\D}$ commutes with products. Thus, if $G$ is a $\D$-group we get an induced $\D$-group structure on $\tau_{\DD/\D}G$ via 
$$\tau_{\DD/\D}\mu:\tau G\times \tau G\to \tau G$$ 
where $\mu:G\times G\to G$ denotes the multiplication map. Moreover, if $V$ is a (right) $\D$-torsor for $G$, meaning that there is a $\D$-morphism $*:V\times G\to V$ that yields a regular action of $G$ on $V$, then $\tau_{\DD/\D}(*):\tau_{\DD/\D} V\times \tau_{\DD/\D} G\to \tau_{\DD/\D} V$ is a $\D$-morphism that yields a (right) regular action of $\tau_{\DD/
D} G$ on $\tau_{\DD/\D} V$. We will denote the latter action $\tau_{\DD/
\D}(*)$ simply by $\cdot$.

As we noted above, for any $\D$-morphism $f:V\to W$ we have that $\nabla^W_\DD\circ f=\tau_{\DD/\D}f\circ \nabla^V_{\DD}$. Hence, for $\mu:G\times G\to G$ the multiplication map of a $\D$-group $G$, we have 
$$\nabla^G_\DD\circ \mu=\tau_{\DD/\D}\mu\circ \nabla^{G\times G}_{\DD}.$$
In other words, the map $\nabla^G_\DD:G\to \tau_{\DD/\D}G$ is a group homomorphism. More generally, if $V$ is a (right) $\D$-torsor for $G$, with action $*:V\times G\to V$, then
$$\nabla^V_\DD\circ *=\tau_{\DD/\D}(*)\circ \nabla^{V\times G}_{\DD}$$
and so the map $\nabla^V_\DD:V\to \tau_{\DD/\D}V$ preserves the action; namely, 
$$\nabla^V_\DD(v* g)=\nabla^V_\DD(v)\cdot \nabla^G_\DD(g).$$

A parameterized $D$-variety $(G,t)$ with respect to $\DD/\D$ where $G$ is a $\D$-group is called a parameterized $D$-group with respect to $\DD/
\D$ (aka parameterized $\DD/\D$-group) if $t:G\to \tau_{\DD/
\D} G$ is a group homomorphism. Note that being a parameterized $D$-group is equivalent to $(G,t)^\#$ being a subgroup of $G$. 

\begin{comment}
\begin{lemma}
Let $(G,t)$ be a parameterized $\DD/\D$-group. Taking $\#$-points defines a 1:1 correspondence between parameterized D-subgroups of $(G,t)$ over $K$ and $\Pi$-algebraic subgroups of $(G,t)^\#$ over $K$.
\end{lemma}
\begin{proof}
By Fact~\ref{Dvarprop}(ii), it suffices to show that the $\D$-closure over $K$ of a $\Pi$-algebraic subgroup $H$ of $(G,t)^\#$ is a subgroup of $G$. Consider 
$$f_1(x\cdot y^{-1})=e, \dots, f_s(x\cdot y^{-1})=e$$
where $e$ is the identity of $G$ and the $f_i$'s are the $\D$-equations defining the $\D$-closure $\bar H$ of $H$. These equations hold for all $(x,y)\in H\times H$. Now, as $H\times H$ is $\D$-dense in $\bar H\times \bar H$, this also holds in $\bar H\times \bar H$, and hence $\bar H$ is a subgroup of $G$.
\end{proof}
\end{comment}

More generally, given a parameterized $D$-group $(G,t)$ and a parameterized $D$-variety $(V,s)$ such that $V$ is a (right) $\D$-torsor for $G$, we say that $(V,s)$ is a parameterized (right) $D$-torsor for $(G,t)$ (w.r.t. $\DD/\D$) if $s$ preserves the action $*$; more precisely, 
$$s(v*g)=s(v)\cdot t(g)$$
where recall that $*$ denotes the action of $G$ on $V$ and $\cdot$ denotes the action of $\tau G$ on $\tau V$. Note that being a parameterized $D$-torsor is equivalent to $(V,s)^\#$ being a $\Pi$-torsor for $(G,t)^\#$ with respect to the induced action $*$ of $G$ on $V$. For ease of notation, we will write $(V,s)\circlearrowright (G,t)$ when $(V,s)$ is a paramtererized $\DD/\D$-torsor for $(G,t)$.

%\begin{remark}
%In a paper of Wibmer et al, they talk about $D$-torsors in the ordinary case when the group $G$ is over the constants equipped with the zero section.  We should briefly talk about this.
%\end{remark}

%From \cite[\S 4]{LS1}, we obtain the following equivalence of categories.

Recall from Fact~\ref{Dvarprop} that, via the $\#$-points functor, there is a close connection between parameterized $\DD/\D$-varieties and $\Pi$-varieties with $\D$ bounding their $\Pi$-type. In the case of parameterized D-groups and D-torsors, we have a much closer relationship.

\begin{theorem}\label{thm: equiv of cats} %Let $(K,\Pi)$ be a partial differential field. Let $\Pi = \DD \cup \D$ be a disjoint partition of the set of derivations with $\DD$ nonempty.
\
\begin{enumerate}
    \item There is an equivalence of categories between connected parameterized $\DD/\D$-groups over $K$ and connected $\Pi$-algebraic groups over $K$ with $\Pi$-type bounded by $\D$. The functor is given by taking $\#$-points in the former category. 
    \item More generally, let $\mathcal E$ be the category of parameterized $\DD/\D$-torsors for connected $\DD/\D$-groups over $K$ where morphisms between $(V,s)\circlearrowright (G,t)$ and $(W,r)\circlearrowright(G,t)$ are $\DD/\D$-morphisms $(V,s)\to (W,r)$ over $K$ that preserve the action. On the other hand, let $\mathcal F$ be the category of $\Pi$-torsors for connected $\Pi$-algebraic groups over $K$ with $\Pi$-type bounded by $\D$ where the morphisms between $X\circlearrowright H$ and $Y\circlearrowright  H$ are $\Pi$-morphisms $X\to Y$ over $K$ preserving the action. Then, the $\#$-points functor yields an equivalence between $\mathcal E$ and $\mathcal F$.
\end{enumerate}
\end{theorem}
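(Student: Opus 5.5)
Part (1) is known: it is essentially \cite[\S4]{LS1}, and I would recall that argument and check it carries over. Part (2) is new, and I would reduce it to part (1) together with Fact~\ref{Dvarprop}.

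\emph{Groups.} The $\#$-points functor is well defined. If $(G,t)$ is a parameterized $\DD/\D$-group, then $(G,t)^\#$ is a $\Pi$-algebraic subgroup of $G$. Each of its points $a$ satisfies $\nabla_\DD(a)=t(a)$, so $K\langle a\rangle_\Pi=K\langle a\rangle_\D$, and hence $\D$ bounds the $\Pi$-type.

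Connectedness passes to $\#$-points by Fact~\ref{Dvarprop}(1)--(2). A $\D$-generic point of $G$ lying in $(G,t)^\#$ is a $\Pi$-generic point of $(G,t)^\#$, and $\#$-points give a bijection between $K$-subvarieties on the two sides. So a proper $K$-definable subgroup of finite index in $(G,t)^\#$ would have $\D$-closure a proper parameterized D-subvariety that is a finite-index subgroup, contradicting connectedness of $G$. I would state this as a small lemma: the $\D$-closure of a subgroup of $(G,t)^\#$ is a subgroup, using $\D$-density of $H\times H$ in $\bar H\times\bar H$.

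\emph{Full faithfulness.} Given a $\Pi$-homomorphism $f:(G,t)^\#\to(H,r)^\#$ over $K$, take the $\D$-closure of its graph. The graph is a $\Pi$-subgroup of $(G\times H,t\times r)^\#$, so by the lemma its $\D$-closure is a parameterized D-subgroup $\Gamma\le G\times H$. The projection $\Gamma\to G$ is a bijection on $\#$-points. I would show it is an isomorphism of $\D$-groups by comparing generics. If $a$ is $\D$-generic in $G$ and lies in $(G,t)^\#$, then $f(a)\in K\langle a\rangle_\Pi=K\langle a\rangle_\D$. So $(a,f(a))$ is $\D$-generic in $\Gamma$, and the projection is $\D$-birational. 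A birational bijective homomorphism of connected $\D$-groups is an isomorphism, using group translates to spread regularity. That $f$ is a $\DD/\D$-morphism then follows by density.

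\emph{Essential surjectivity.} This is the main obstacle. Given a connected $\Pi$-group $X$ with $\D$ bounding its type, Fact~\ref{Dvarprop}(3) gives $(W,t)$ and a $\Pi$-birational map $X\dashrightarrow(W,t)^\#$. This only yields a birational group law. I would follow the Weil group-chunk construction in the $\D$-algebraic category, as in \cite{LS1}. First, transport the generic multiplication to $W$. Then use $\D$-Weil to obtain a $\D$-group $G$ birational to $W$ (Pillay's theorem for $\D$-groups). Transfer the section $t$ to $G$ via the birational map; it is defined generically. Next, extend the section everywhere by translation, using $s(g)=\nabla_\DD(g)$ on sharp points and the homomorphism identity. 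Integrability holds generically and hence everywhere. Finally check $(G,t)^\#\cong X$; this follows since the two agree on generics and are groups.

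\emph{Torsors.} For part (2) I would apply the same scheme. Functoriality is clear: if $(V,s)\circlearrowright(G,t)$, then $\nabla_\DD(v*g)=\nabla_\DD(v)\cdot\nabla_\DD(g)$ shows $(V,s)^\#$ is a torsor for $(G,t)^\#$. Full faithfulness again uses the $\D$-closure of graphs. For essential surjectivity, given $X\circlearrowright H$ with $H=(G,t)^\#$, I would avoid redoing Weil's construction. Instead, use Remark~\ref{rmk: subtorsor}(3) in the reduct $(\U,\D)$. Let $V$ be the $\D$-torsor $Z\times G/\sim$ built from the cocycle $h$ of $X$; this cocycle takes values in $H\le G$, which is $\D$-definable data. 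Then $X$ sits inside $V$ as an $H$-orbit, as in the Claim in Theorem~\ref{thm: cohomological fact}. I would define $s$ on $V$ by $s(x*g)=\nabla_\DD(x)\cdot t(g)$ for $x\in X$. This is well defined because $\nabla_\DD(xh)=\nabla_\DD(x)t(h)$ for $h\in H$. It is a $\D$-section since $\nabla_\DD(x)$ is $\D$-rational in $x$ over $K$ on $X$, and $X$ is $\D$-bounded. Integrability is inherited from the commuting derivations. Then $(V,s)^\#=X$, because any $\#$-point is $x*g$ with $t(g)=\nabla_\DD(g)$, that is, $g\in H$. The delicate point is this $\D$-definability of $\nabla_\DD$ on $X$, which is exactly where boundedness by $\D$ is used.
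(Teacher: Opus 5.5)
Your part (1) follows the same route as the paper, which just cites \cite[Theorem 4.6]{LS1}. In part (2), your definition $s(x*g)=\nabla_\DD(x)\cdot t(g)$, the well-definedness check, integrability by density, and the computation $(V,s)^\#=X$ are all fine, provided $V$ is a genuine $\D$-torsor for $G$ over $K$ containing $X$ as an $H$-orbit. The gap is in producing that $V$.

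Remark~\ref{rmk: subtorsor}(3) cannot be applied in the reduct $(\U,\D)$, because $X\circlearrowright H$ is not a torsor there. In general $H=(G,t)^\#$ is a proper $\D$-dense subgroup of $G$, so it is not $\D$-definable. The set $Z$ of realizations of $tp_\Pi(a/K)$ and the relation $\sim$ are only $\Pi$-definable, and the cocycle $\sigma\mapsto h(a,\sigma(a))$ lives on $\Aut_\Pi(K^{\Pi\text{-}\diff}/K)$. So $Z\times G/\sim$ is only a $\Pi$-definable $G$-torsor: you have computed $\iota^1([X])\in\HH^1_\Pi(K,G)$. The fact that the cocycle takes values in $G$ does not make this torsor $\D$-definable. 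Without that, neither $\tau_{\DD/\D}V$ nor the identity $\nabla_\DD(x*h)=\nabla_\DD(x)\cdot t(h)$ is available, since the latter uses property (3) of $\tau_{\DD/\D}$ for a $\D$-morphism.

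Getting a $\D$-definable representative over $K$ is exactly the surjectivity of $\HH^1_\D(K,G)\to\HH^1_\Pi(K,G)$ in Theorem~\ref{thm: param kolchin}. Its proof needs a real input: Pillay's embedding of a $\Pi$-torsor into an algebraic torsor (Fact~\ref{embedding}(2)), followed by the observation that the $G$-orbit there is $\D$-definable over $K$. You can repair your argument this way; there is no circularity, since that proof does not use the present theorem. Alternatively, use a homogeneous-space version of the $\D$-group-chunk argument you wanted to avoid. Either way, some such ingredient is missing as written.

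Separately, your justification that $s$ is a $\D$-section is wrong as stated. $\D$ bounding the type of $X$ only gives $K\langle x\rangle_\Pi=K\langle\alpha\rangle_\D$ for some tuple $\alpha$. It does not give $D_ix\in K\langle x\rangle_\D$. For example, take $\Pi=\{D,\d\}$, $\D=\{\d\}$ and $u\in K$ with $Du=1$, $\d u=0$. Transport the action of $H=\{Dx=0\}$ along the $\Pi$-definable bijection $x\mapsto y=x+u\,\d x$ (inverse $y\mapsto y-u\,Dy$). This gives a trivial torsor with $\d$-bounded type, yet $Dy=\d x\notin K\langle y\rangle_\d$, because $x\mapsto x+u\,\d x$ has nontrivial kernel on $\U$.

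Once $V$ is a $\D$-torsor over $K$, the correct argument is by invariance. Fix $x_0\in X$; then $s(x_0*g)=\nabla_\DD(x_0)\cdot t(g)$ for $g\in G$. So $s$ is $\D$-definable over $K$ together with $x_0,\nabla_\DD(x_0)$, and it is $\Aut_\Pi(\U/K)$-invariant. By elimination of imaginaries in $\DCF_{0,|\D|}$, its $\D$-code lies in $\dcl_\Pi(K)=K$.
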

\begin{proof}
Part (1) follows from \cite[Theorem 4.6]{LS1} and, in a natural manner, (2) follows from (1). 
\end{proof}

%We quote one final fact from \cite{LS1} which relates the model-theoretic notion of internality to $\Pi$-type.

%\begin{fact}\cite[Lemma 1.9]{LS1}\label{fact: pitype internality}
   % Let $X$ be a K-definable set and let $a$ be a tuple from $\mathcal{U}$. Suppose $\tp(a/K)$ is $X$-internal, then
      %  \begin{enumerate}
        %    \item $\Pi$-$type(a/K) \leq \Pi$-$type(X)$.
          %  \item If $\D$ bounds the $\Pi$-$type(X)$ then $\D$ also bounds the $\Pi$-$type(a/K)$.
       % \end{enumerate}
%\end{fact}

We now recall the definition of an $X$-strongly normal extension initially defined in \cite{LS1} as Definition 2.1. However following \cite{LSPillay}, we weaken the condition of the ``constants being suitably closed" (i.e., $X(K) = X(K^{\diff})$) to just being a ``no-new-constants" extension (i.e., $X(L) = X(K)$). 

\begin{definition}\cite[Definition 3.3]{LSPillay}
Let $X$ be a $K$-definable set. A finitely generated differential field $L/K$ is an $X$-strongly normal extension of $K$ if 
\begin{enumerate}
    \item For any $\sigma \in \Aut(\mathcal{U}/K)$ we have $\sigma(L) \subset L\l X\r$.
    \item $X(L) = X(K)$.
\end{enumerate}
   A differential extension of $K$ is called generalized strongly normal if it is $X$-strongly normal for some $K$-definable $X$.
\end{definition}

\begin{remark} \
\begin{enumerate}
    \item The fundamental theorem of Galois theory was first proved in \cite[Theorem 2.7]{LS1}. Under the present no-new-constants hypothesis, the fundamental theorem is in \cite[Theorem 2.3]{LSPillay}, which we re-stated in part (iii) of Fact~\ref{fact: torsor theorem}.
    \item Part (3) of Theorem 2.5 in \cite{LS1} states (in our notation) that if $(G,t)^\#$ is the Galois group of an $X$-strongly normal extension $L/K$, then $L/K$ is also $(G,t)^\#$-strongly normal.  
\end{enumerate}
\end{remark}

We now introduce the notion of a differential Galois extension for a parameterized D-torsor. 

\begin{definition}\label{def: extension for torsor}
Let $(V,s)\circlearrowright (G,t)$ be a parameterized $\DD/\D$-torsor. A differential extension $L/K$ is said to be a differential Galois extension for $(V,s)$ if 
\begin{enumerate}
    \item [(i)] $L=K\langle a\rangle_\Pi$ for $a$ a solution of the $\#$-differential equation on $V$, recall this is $\nabla^V_\DD(x)=s(x)$ as defined in \eqref{sharpeq}, and 
    \item [(ii)] $K\langle (G,t)^\#\rangle_\Pi\cap L=K$ or equivalently $(G,t)^\#(L) = (G,t)^\#(K)$.
\end{enumerate}
\end{definition}

We now observe that these extensions are generalized strongly normal extensions with Galois group embedding in $(G,t)^\#$, that is, $(G,t)^\#$-strongly normal extensions. 

%More precisely, a solution to a $\#$-differential equation on a parameterized D-torsor $(V,s)$ generates a $(G,t)^\#$-strongly normal extension if and only if it generates a differential Galois extension for $(V,s)$ if and only if part $(ii)$ of Definition~\ref{def: extension for torsor} holds:

\begin{lemma}\label{lem: part a}
    If $L/K$ is a differential Galois extension for a parameterized $\DD/\D$-torsor $(V,s)\circlearrowright(G,t)$, then $L/K$ is a $(G,t)^\#$-strongly normal extension. 
    
    Conversely, if a solution of the $\#$-differential equation on a parameterized $\DD/\D$-torsor $(V,s)\circlearrowright(G,t)$ generates a $(G,t)^\#$-strongly normal extension, then the extension is a differential Galois extension for $(V,s)$.
\end{lemma}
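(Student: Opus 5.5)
The plan is to reduce both directions to Theorem~\ref{thm: cohomological fact}(1), applied to $T=\DCF_{0,m}$ with $A=K$ (replacing $K$ by its $\dcl$-closure, which in $\DCF_{0,m}$ is $K$ itself since $K$ is a differential field). The dictionary between the two settings is as follows. By the remark after the definition of parameterized D-torsors, $Q:=(V,s)^\#$ is a $K$-definable ($\Pi$-)torsor for the $K$-definable group $H:=(G,t)^\#$ under the restricted action $*$. Moreover, for a finitely generated $L=K\langle a\rangle_\Pi$ we have $\dcl(K,a)=L$, so that $H(L)=H(\dcl(K,a))$. Finally, ``$L/K$ is $H$-strongly normal'' means precisely that $\tp(a/K)$ is strongly internal and weakly orthogonal to $H$, i.e.\ that $\dcl(K,a)$ is a Galois extension of $K$ relative to $H$ in the sense of Section~\ref{sec:modPrelims}. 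I would spell out this last translation carefully. Condition (1) of strong normality, $\sigma(L)\subseteq L\langle H\rangle$ for all $\sigma\in\Aut(\U/K)$, is equivalent (by saturation and homogeneity) to every realization $a'$ of $\tp(a/K)$ lying in $\dcl(K,a,H)$. Condition (2), $H(L)=H(K)$, is equivalent to weak orthogonality by Fact~\ref{weak orth fact}.

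For the first direction, suppose $L/K$ is a differential Galois extension for $(V,s)$, so $L=K\langle a\rangle_\Pi$ with $a\in Q$ and $H(L)=H(K)$. Strong normality follows directly from the torsor structure: if $\sigma\in\Aut(\U/K)$, then $\sigma(a)$ is again a $\#$-point of $V$, since $V$, $s$ and the equation \eqref{sharpeq} are over $K$. Regularity of the action then gives a unique $g\in H$ with $\sigma(a)=a*g$. As $*$ is a $\D$-morphism over $K$, $\sigma(a)\in K\langle a,g\rangle_\Pi\subseteq L\langle H\rangle$, and hence $\sigma(L)\subseteq L\langle H\rangle$. The no-new-constants condition is exactly clause (ii) of Definition~\ref{def: extension for torsor}. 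So $L/K$ is $H$-strongly normal; alternatively, this is just the ``if'' direction of Theorem~\ref{thm: cohomological fact}(1) applied to $Q\circlearrowright H$.

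For the converse, let $a\in Q$ and suppose $L=K\langle a\rangle_\Pi$ is $H$-strongly normal. Clause (i) of Definition~\ref{def: extension for torsor} holds by construction. Clause (ii) is condition (2) of the definition of $H$-strongly normal, namely $H(L)=H(K)$. It remains to justify the ``equivalently'' in (ii): that $K\langle H\rangle_\Pi\cap L=K$ is equivalent to $H(L)=H(K)$. One direction is trivial, since the coordinates of points of $H(L)$ lie in $K\langle H\rangle\cap L$. For the other, I would use Fact~\ref{weak orth fact}: if $H(L)=H(K)$, then $\tp(a/K)$ has a unique extension to $K\cup H$. Any element $c\in K\langle H\rangle\cap L$ is then fixed by every automorphism of $\U$ fixing $K\langle H\rangle$ pointwise, since such automorphisms fix $\tp(a/K\cup H)$ and so move $a$ only within its type over $K\cup H$, which determines $c$. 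Because this type is the unique extension of $\tp(a/K)$, every $\sigma\in\Aut(\U/K)$ can be composed with one fixing $H$ so as to fix $a$, and hence $c\in\dcl(K)=K$.

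The only genuinely delicate point is this equivalence of the two formulations of no new constants, in particular the stability of $\dcl(K,H)$ under $\Aut(\U/K)$ used in the last step. I would handle it either with the argument above or by quoting \cite[Lemma 2.2]{LSPillay} directly, which gives this equivalence for Galois-type extensions. Everything else is a formal consequence of $(V,s)^\#$ being a $K$-definable $H$-torsor.
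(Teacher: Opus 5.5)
Your proposal is correct and is essentially the paper's argument. The one difference is in how strong internality is obtained: you cite the remark that $(V,s)^\#$ is a $(G,t)^\#$-torsor, whereas the paper checks this inline. It shows that if $b=a*g$ is another $\#$-point then
\[
s(a)\cdot t(g)=s(b)=\nabla^V_\DD(a)\cdot\nabla^G_\DD(g)=s(a)\cdot\nabla^G_\DD(g),
\]
which forces $g\in(G,t)^\#$. The paper also simply takes the equivalence of the two forms of condition (ii) as part of the definition.

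Your extra justification of that equivalence is fine, with one small adjustment. The automorphism you compose with only needs to fix a finite tuple $h$ from $(G,t)^\#$ with $c\in \dcl(K,h)$, where ordinary homogeneity applies. It does not need to fix all of $(G,t)^\#(\U)$, which would require homogeneity over a large set.
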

\begin{proof}
Let $L/K$ be a differential Galois extension for a parameterized torsor. Then by Condition (i) we have $L=K\langle a \rangle_\Pi$ such that $\nabla^V_\DD(a)=s(a)$. We argue that $p:=tp(a/K)$ is strongly $(G,s)^\#$-internal. Indeed, if $b\models p$, then $\nabla^V_\DD(b)=s(b)$. Now let $g\in G$ such that $b=a*g$ (this is by transitivity of the action $*$). It suffices to show that $g\in (G,t)^\#$. We have $s(b)=s(a)\cdot t(g)$, and on the other hand
$$s(b)=\nabla^V_\DD(b)=\nabla^V_\DD(a*g)=\nabla^V_\DD(a)\cdot \nabla^G_\DD(g)=s(a)\cdot \nabla^G_\DD(g)$$
 by regularity of the action $\cdot$, this yields $\nabla^G_\DD(g)=t(g)$ as claimed. Condition (ii) is just a translation of $p$ being weakly orthogonal to $(G,t)^\#$.

For the converse, let $a$ be a solution of the $\#$-equation. If $L = K\langle a \rangle_\Pi$ is $(G,t)^\#$-strongly normal, then $\tp(a/K)$ is weakly orthogonal to $(G,t)^\#$. As we said, this is synonomous with the desired condition (ii) which was all we were missing. 
\end{proof}

%{\bf We should observe how the Galois correspondence is now with paramaterized D-subgroups of $G$}

Let $L=K\langle a\rangle_\Pi$ be a differential Galois extension for a parameterized $\DD/\D$-variety $(V,s)\circlearrowright(G,t)$. Let $\mathcal G=Aut_\Pi(L\langle(G,t)^\#\rangle/K\langle(G,t)^\#\rangle)$ be its (differential) Galois group. For any $\sigma\in \mathcal G$, we have that $\sigma(a)\in (V,s)^\#$. Set $\mu(\sigma)$ be the unique element of $(G,t)^\#$ with $\sigma(a)=a*\mu(\sigma)$. It is straightforward to check that $\mu:\mathcal G\to (G,t)^\#$ is an injective group homomorphism and hence $\mathcal G$ is a isomorphic to the $\#$-points of a $D$-subgroup of $(G,t)$. In fact, the arguments of \cite[Proposition 5.8 and Corollary 5.9]{LS1} yield:

\begin{proposition}
    Let $L=K\langle a\rangle_\Pi$ be a differential Galois extension for a parameterized $\DD/\D$-variety $(V,s)\circlearrowright(G,t)$. The Galois group of $q=tp(a/K)$ is of the form $(H,t)^\#$ for some parameterized D-subgroup $H$ of $(G,t)$ over $K$ and the regular action is given by $b*h$ for $b\models q$ and $h\in (H,t)^\#$. Furthermore, there is a natural 1-1 correspondence between intermediate $\Pi$-fields of $L/K$ and parameterized D-subgroups of $(H,t)$ defined over $K$.
\end{proposition}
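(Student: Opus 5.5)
The plan is to reduce everything to the model-theoretic torsor theorem (Fact~\ref{fact: torsor theorem}) and then translate back into parameterized D-geometry using Fact~\ref{Dvarprop} and Theorem~\ref{thm: equiv of cats}. By Lemma~\ref{lem: part a}, $q=tp(a/K)$ is strongly internal and weakly orthogonal to $(G,t)^\#$, and $L=K\langle a\rangle_\Pi$ is $\dcl(K,a)$ in $\DCF_{0,m}$. Fact~\ref{fact: torsor theorem}(2) therefore supplies a $K$-definable group $\mathcal H$ in $\dcl(K,(G,t)^\#)$ acting regularly on $Q=q(\U)$.

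The key step is to realize this group concretely inside $(G,t)^\#$. The paper already notes that for $\sigma\in \mathcal G$ the assignment $\mu(\sigma)$, defined by $\sigma(a)=a*\mu(\sigma)$, is an injective homomorphism into $(G,t)^\#$. I will work instead with the set
$$S=\{g\in (G,t)^\#: a*g\models q\},$$
which describes the same thing. For $b_1,b_2\models q$ I will show that the unique $g$ with $b_2=b_1*g$ lies in $(G,t)^\#$, using the computation in Lemma~\ref{lem: part a}. Weak orthogonality then makes this $g$ independent of the choice of base point. Concretely, if $b_1=\sigma(a)$, then, since $q$ has a unique extension over $K\cup(G,t)^\#$, we may take $\sigma$ fixing $(G,t)^\#$ pointwise, so $b_1*g=\sigma(a*g)$ realizes $q$ precisely when $a*g$ does. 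It follows that $S$ is a subgroup, namely the image of $\mu$. Its definability over $K$ follows from the fact that $S=\{g\in(G,t)^\#: a*g\models q\}$ is invariant under $\Aut(\U/K)$, again by the previous argument, and that $q$ is isolated (Fact~\ref{fact: torsor theorem}(1)). Call this $K$-definable $\Pi$-subgroup $\mathcal H\le (G,t)^\#$; it acts on $Q$ by $b*h$.

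Next I will pass to D-subgroups. The $\D$-closure $H$ of $\mathcal H$ over $K$ is a $\D$-subgroup of $G$, because the group equations $x\cdot y^{-1}\in H$ extend by $\D$-density of $\mathcal H\times\mathcal H$ in $H\times H$. By Fact~\ref{Dvarprop}(2), $H$ is a parameterized D-subvariety of $(G,t)$ with $(H,t)^\#=\mathcal H$, so $(H,t|_H)$ is a parameterized D-subgroup. I expect the main obstacle to be checking this step carefully, in particular handling the case where $\mathcal H$ is not $K$-irreducible; there I would apply Fact~\ref{Dvarprop}(2) componentwise, or use the connected component together with finitely many cosets.

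Finally, the Galois correspondence. Fact~\ref{fact: torsor theorem}(3) gives a bijection between intermediate $\dcl$-closed sets, which are exactly the intermediate $\Pi$-fields since $\dcl$ in $\DCF_{0,m}$ is differential field generation, and the $K$-definable subgroups of $\mathcal H$. These subgroups are $\Pi$-algebraic subgroups, because definable subgroups of $\Pi$-algebraic groups are $\Pi$-closed. Applying the $\D$-closure and $\#$-points bijection of Fact~\ref{Dvarprop}(2) as in the previous step, these correspond to the parameterized D-subgroups of $(H,t)$ over $K$. This yields the stated correspondence.
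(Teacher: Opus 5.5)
Your proposal is correct and takes essentially the same route as the paper. The paper embeds the Galois group into $(G,t)^\#$ via $\sigma(a)=a*\mu(\sigma)$, and your set $S$ is exactly the image of $\mu$; it then obtains the parameterized D-subgroup and the correspondence from Fact~\ref{Dvarprop}(2) and Fact~\ref{fact: torsor theorem}(3), leaving the details to \cite[Proposition 5.8 and Corollary 5.9]{LS1}, and you simply make the definability and $\D$-closure steps explicit.

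One small point of precision: where you take $\sigma$ fixing $(G,t)^\#$ pointwise, you only need $tp(b_1/K,g)=tp(a/K,g)$ for each $g\in(G,t)^\#$. This is what weak orthogonality gives directly (since $(G,t)^\#(\U)$ is not small), and it already yields $b_1*g\models q \iff a*g\models q$.
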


In Pillay's paper \cite{Pillay2}, or the first author's \cite{LS1}, differential Galois extensions for parameterized D-torsors were not explored (at this level of generality); only a special case of them were studied: those coming from log-differential equations on parameterized D-groups. We now recall these special case of differential Galois extensions. Given a $\D$-group $G$ the fibre of $\pi:\tau_{\DD/\D}G\to G$ at the identity $e\in G$ is denoted by $\tau_{\DD/\D}G_e$. Given a parameterized $\DD/\D$-group $(G,t)$, we say that $\alpha\in\tau_{\DD/\D}G_e$ is an integrable point of $(G,t)$ if $(G,\alpha \, t)$ is a parameterized $\DD/\D$-variety; here $\alpha \, t:G\to \tau_{\DD/
\D}G$ denotes the section $\alpha\cdot t(x)$. From \cite[Lemma 5.3]{LS1}, we see that $\alpha$ is an integrable point if and only if 
\begin{equation}\label{log-eq}
    \nabla^G_\DD(x)=\alpha \cdot t(x)
\end{equation}
has a solution (in $G$). We refer to \eqref{log-eq} as a log differential equation on $(G,t)$, or $\alpha$-log differential equation if we wish to specify $\alpha$.

\begin{definition}
Let $(G,t)$ be a parameterized $\DD/\D$-group over $K$.
%and let $\alpha\in \tau_{\DD/\D}G_e(K)$ be an integral $K$-point. 
A differential extension $L/K$ is said to be a differential Galois extension for a log differential equation on $(G,t)$ if the following two conditions hold:
\begin{enumerate}
    \item [(i)] There exists a $K$-point $\alpha\in \tau_{\DD/\D}G_e(K)$ such that $L=K\langle a\rangle_\Pi$ for $a$ a solution of the $\alpha$-log differential equation $\nabla^G_\DD(x)=\alpha \cdot t(x)$
    \item [(ii)] $K\langle (G,t)^\#\rangle_\Pi\cap L=K$, or equivalently, $(G,t)^\#(L) = (G,t)^\#(K)$.
\end{enumerate}
\end{definition}

\begin{lemma}
    If $L=K\langle a\rangle_\Pi$ is a differential Galois extension for an $\alpha$-log differential equation on a parameterized D-group $(G,t)$, then $L/K$ is a differential Galois extension for the $\#$-differential equation on a parameterized D-torsor.
\end{lemma}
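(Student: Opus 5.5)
The plan is to take the parameterized D-torsor to be the group itself, with the twisted section. Concretely, let $\alpha\in\tau_{\DD/\D}G_e(K)$ be the $K$-point from condition (i), and set $V:=G$ with $G$ acting on itself by right multiplication $*=\mu$. For the section take $s:=\alpha\, t$, i.e.\ $s(x)=\alpha\cdot t(x)$, where the product is in the $\D$-group $\tau_{\DD/\D}G$. Then the $\#$-differential equation $\nabla^V_\DD(x)=s(x)$ on $(V,s)$ is literally the $\alpha$-log equation \eqref{log-eq}. So $a$ is a solution of it, and condition (i) of Definition~\ref{def: extension for torsor} holds with the same $a$.

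Next I check that $(G,\alpha t)$ is a parameterized $\DD/\D$-variety. This is exactly the statement that $\alpha$ is an integrable point. Since \eqref{log-eq} has the solution $a$ in $G$, \cite[Lemma 5.3]{LS1} gives integrability, so the integrability condition on $s$ holds. Clearly $s$ is a $\D$-section defined over $K$, since $\alpha$ is a $K$-point and $t$ is over $K$. Its first coordinate is the identity, because $\alpha$ lies in the fibre over $e$ and $t$ is a section, so $\alpha\cdot t(x)$ lies over $e\cdot x=x$.

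Then I verify compatibility with the action: $s(v*g)=s(v)\cdot t(g)$. By functoriality, the action of $\tau G$ on $\tau V=\tau G$ is just $\tau_{\DD/\D}\mu$, the group law of $\tau_{\DD/\D}G$. Since $t$ is a group homomorphism, associativity gives
$$s(vg)=\alpha\cdot t(vg)=\alpha\cdot t(v)\cdot t(g)=s(v)\cdot t(g).$$
Right multiplication is a regular $\D$-action, so $(V,s)\circlearrowright(G,t)$ is a parameterized $\DD/\D$-torsor over $K$.

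Finally, condition (ii) of Definition~\ref{def: extension for torsor} is identical to condition (ii) of the definition of a Galois extension for a log equation, namely $(G,t)^\#(L)=(G,t)^\#(K)$. Hence $L/K$ is a differential Galois extension for $(V,s)$. I expect the only real point of care to be the integrability step. It is handled by citing \cite[Lemma 5.3]{LS1}; alternatively one can note that $(V,s)^\#=a\cdot(G,t)^\#$ is nonempty and check the integrability identity directly from the commuting of the derivations in $\DD$. The rest is routine bookkeeping with the identification $\tau_{\DD/\D}(G\times G)\cong\tau_{\DD/\D}G\times\tau_{\DD/\D}G$.
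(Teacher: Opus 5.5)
Your proposal is correct and takes the same approach as the paper's proof. The paper also sets $V=G$ under right multiplication with $s=\alpha\cdot t$, gets integrability from \cite[Lemma 5.3]{LS1} via the existing solution $a$, and notes that condition (ii) is unchanged; you additionally write out the routine checks (that $s$ is a section and that $s(vg)=s(v)\cdot t(g)$), which the paper passes over with ``clearly.''
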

\begin{proof}
    Let $(V,s)$ with $V=G$ and $s=\alpha\cdot t$. Since the equation $\nabla^G_\DD(x)=\alpha \cdot t(x)$ has a solution (namely $a$), the section $s:V\to \tau_{\DD/\D}V$ is integrable (see \cite[Lemma 5.3]{LS1}), and hence $(V,s)$ is indeed a parameterized D-variety. It is clearly a $D$-torsor for $(G,t)$ and $a$ is a $\#$-point of $(V,s)$.
\end{proof}

 In Remark~\ref{rem:example} we saw that a differential Galois extension $L/K$ for a parameterized D-torsor $(V,s)\circlearrowright (G,t)$ with Galois group $(G,t)^\#$ need not be a Galois extension for a log-differential equation on $(G,t)$.  

%that The main example from the introduction shows that the converse does not hold in general; namely, not all differential Galois extensions of a $\#$-differential equation on a parameterized torsor are the differential extension for a $\alpha$-log equation on a parameterized group.

Our main result,  in Section~\ref{sec:mainresult}, shows that any generalized strongly normal extension is the differential Galois extension for a parameterized $D$-torsor. Furthermore, we will characterize when the extension is the differential Galois extension for a log-differential equation on its Galois group.

\

\section{A parameterized version of Kolchin's cohomology theorem}\label{sec:parKoltheorem}

We carry forward the setup from the previous section. In particular, we have a partition of the derivation set as 
$$\Pi=\Delta\cup \DD.$$
We work inside a saturated model $(\U, \Pi)\models \DCF_{0,m}$ and $K$ denotes a (small) differential subfield. By $\HH^1_{\Pi}$ we mean definable Galois cohomology in $(\U,\
\Pi)\models\DCF_{0,|\Pi|}$; while by $\HH^1_{\D}$ we mean definable cohomology in the reduct $(\U,\D)\models\DCF_{0,|\D|}$.

Kolchin's cohomology theorem from \cite[\S VII.3]{Kolchin2} states that for an algebraic $G$ over $K$ we have $\HH^1_\Pi(K,G)\cong \HH^1_{alg}(K,G)$ where the latter is (algebraic) Galois cohomology which coincides with the definable cohomology in the reduct to $\U\models ACF_0$ (namely when taking $\D$ to be empty). The aim of this section is to prove a parameterized version of this when the group $G$ is a $\D$-algebraic group. We will use the following facts, see \cite[Corollary 4.2]{Pillay4} and \cite[Fact 1.5]{Pillay5}:

\begin{fact}\label{embedding} Let $G$ be a $\Pi$-algebraic group over $K$.
    \begin{enumerate}
        \item There exists an algebraic group $H$ over $K$ and a $K$-definable group embedding $G\hookrightarrow H$. We identify $G$ as a subgroup of the algebraic group $H$.
        \item In addition, if $V\circlearrowright G$ is a $\Pi$-torsor over $K$, then there exists an algebraic torsor $W\circlearrowright H$ and a $K$-definable embedding $f:V\hookrightarrow W$ such that
        $$f(v* g)=f(v)\cdot g \quad \text{ for all } v\in V, g\in G$$
        where $*$ is the action in $V\circlearrowright G$ while $\cdot$ is the action in $W\circlearrowright H$. In other words, $f(V)\circlearrowright G$ is a subtorsor of $W\circlearrowright H$; i.e., the action of $G$ on $f(V)$ is the restriction of the action of $H$ on $W$.
    \end{enumerate}
\end{fact}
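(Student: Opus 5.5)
Part (1) is Pillay's embedding theorem, and the plan is to follow that argument rather than re-derive it in detail. Take a generic $g$ of (the connected component of) $G$ over $K$. Choose $h$ large enough that the $h$-th $\Pi$-prolongation $\nabla_h(g)$ generates $K\langle g\rangle_\Pi$ over $K$. The multiplication on generics is then given by a $K$-definable function of $\nabla_h(g_1)$ and $\nabla_h(g_2)$, and enlarging $h$ makes it rational in these prolongations. So the Zariski closure of the locus of $\nabla_h(g)$, together with the induced rational multiplication, forms a group chunk in the sense of Weil. Weil's group chunk theorem, or Hrushovski's definable version in $\ACF_0$ over $K$, then yields an algebraic group $H$ over $K$. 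Into $H$ the generic type of $G$ maps generically and compatibly with multiplication. Finally, one extends from generics to all of $G$ (and to finitely many cosets of $G^0$) by the usual "every element is a product of two generics" argument. This gives the $K$-definable embedding $G\hookrightarrow H$.

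For part (2) I would use the cohomological machinery of Section~\ref{sec:modPrelims} together with Kolchin's Cohomology Theorem~\ref{thm: Kolchin's equivalence}. Fix $a\in V(M)$ and let $h(x,y)$ be the $K$-definable function with $a*h(a,\sigma(a))=\sigma(a)$. Let $Z$ be the realizations of $tp(a/K)$. Following Remark~\ref{rmk: subtorsor}(3), form the $K$-definable right $H$-torsor $W_0=(Z\times H)/\!\sim$, which represents $\iota^1[V]\in \HH^1_\Pi(K,H)$. Define $f_0:V\to W_0$ by $a*g\mapsto[(a,g)]$. Exactly as in the Claim in the proof of Theorem~\ref{thm: cohomological fact}, this map is $\Aut(\U/K)$-invariant and hence $K$-definable, and it is injective. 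It also satisfies $f_0(v*g)=f_0(v)\cdot g$ for $g\in G$, since the action on $W_0$ is right multiplication in the second coordinate.

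The remaining point is to replace $W_0$ by an honest algebraic torsor. By Theorem~\ref{thm: Kolchin's equivalence}, the class of $W_0$ in $\HH^1_\Pi(K,H)$ comes from $\HH^1_{\alg}(K,H)$. Via the cocycle–torsor correspondence in $\ACF_0$, an algebraic cocycle is represented by an algebraic torsor $W\circlearrowright H$ over $K$. The two torsors $W_0$ and $W$ have the same class in $\HH^1_\Pi(K,H)$, so by Pillay's correspondence $\mathcal P_{\defin}\cong\HH^1_{\defin}$ there is a $K$-definable isomorphism $\phi:W_0\to W$ of $H$-torsors. Set $f=\phi\circ f_0$. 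It is injective, $K$-definable, and $f(v*g)=\phi(f_0(v)\cdot g)=f(v)\cdot g$, as required.

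The step I expect to be the main obstacle is this identification of the two cohomology sets at the level of torsors, not merely at the level of classes. One has to check that the isomorphism of Theorem~\ref{thm: Kolchin's equivalence} is induced by viewing an algebraic torsor as a definable torsor in $\DCF_{0,m}$, so that surjectivity really produces an algebraic $W$ in the preimage of $[W_0]$. If that compatibility is not explicit in Kolchin, I would argue directly: pick a cocycle representative valued in $H(K^{\alg})$ and apply Weil descent (or the $\ACF_0$ cocycle–torsor correspondence) to obtain $W$. Part (1) itself I would treat as a black box from \cite{Pillay4}.
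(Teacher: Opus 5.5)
Your proposal is correct. For part (1) you end up where the paper does: the paper proves neither part of Fact~\ref{embedding}, and simply cites \cite[Corollary 4.2]{Pillay4} and \cite[Fact 1.5]{Pillay5}. Be aware, though, that your heuristic sketch of (1) only works when $G$ has $\Pi$-type zero. In positive type, $K\langle g\rangle_\Pi$ has infinite transcendence degree over $K$, so no finite jet $\nabla_h(g)$ generates it. Moreover, the group law need not be induced on jets of any fixed order. For example, with $\delta\in\Pi$, transporting $(\mathcal{U}^2,+)$ along the bijection $(x,y)\mapsto(x,y+(\delta x)^2)$ gives the law $(a_1+b_1,\,a_2+b_2-2\,\delta a_1\,\delta b_1)$, so the $h$-jet of a product involves $(h+1)$-st derivatives of the factors. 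The paper applies the Fact precisely to infinite-dimensional groups (e.g.\ $\Delta$-groups in Theorem~\ref{thm: param kolchin}), so it is the citation, not the sketch, that carries (1).

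For (2) your route genuinely differs from the paper's bare citation, and it works:
\begin{enumerate}
\item You derive (2) from (1) by realizing $\iota^1[V]$ concretely as $W_0=(Z\times H)/\!\sim$, as in Remark~\ref{rmk: subtorsor}(3).
\item You observe that $a*g\mapsto[(a,g)]$ is a $K$-definable, injective, $G$-equivariant map $V\to W_0$.
\item You then replace $W_0$ by an algebraic torsor via surjectivity of $\HH^1_{\alg}(K,H)\to\HH^1_\Pi(K,H)$.
\end{enumerate}
The compatibility you flag is not a problem. Kolchin's theorem is precisely the bijectivity of the inflation map along the restriction $\Aut_\Pi(K^{\Pi\text{-}\diff}/K)\to\operatorname{Gal}(K^{\alg}/K)$. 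Under Pillay's torsor--cocycle dictionary, this map sends the class of an algebraic $K$-torsor $W$ to the class of $W$ viewed in $\DCF_{0,m}$: for $w\in W(K^{\alg})$, the cocycle $\sigma\mapsto h(w,\sigma(w))$ depends only on $\sigma|_{K^{\alg}}$. Your fallback of choosing a representative with values in $H(K^{\alg})$ is not an independent argument, since producing such a representative is exactly the surjectivity half of Kolchin's theorem.

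What your route buys is transparency and uniformity. Statement (2) becomes a formal consequence of (1) plus Kolchin's theorem. It also applies to any given $K$-definable embedding $G\le H$, so a single $H$ serves all torsors at once, as used in the injectivity part of Theorem~\ref{thm: param kolchin}. The price is that for $G=H$ algebraic, (2) is literally the surjectivity half of Theorem~\ref{thm: Kolchin's equivalence}. So if (2) is obtained your way, the case $\Delta=\emptyset$ of Theorem~\ref{thm: param kolchin} is no longer an independent proof of Kolchin's theorem. This is harmless here, since the paper takes Kolchin's theorem as known.
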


We can now prove the parameterized version of Kolchin's cohomology theorem.

\begin{theorem}\label{thm: param kolchin}
    Suppose $G$ is a $\D$-algebraic group over $K$. Then, there is a canonical isomorphism
    $$\HH^1_\Delta(K,G)\cong \HH^1_\Pi(K,G).$$
\end{theorem}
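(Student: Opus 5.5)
We will work throughout with the torsor description $\mathcal P_{\defin}\cong \HH^1_{\defin}$ from \cite{Pillay3}, in both $(\U,\D)$ and $(\U,\Pi)$. The canonical map $\Phi:\HH^1_\D(K,G)\to \HH^1_\Pi(K,G)$ sends the class of a $K$-definable (in $(\U,\D)$) right torsor $V\circlearrowright G$ to the class of the same torsor, now viewed in $(\U,\Pi)$. This makes sense because every $\D$-definable set, map and action is $\Pi$-definable. Equivalently, at the level of cocycles, $\Phi$ composes a $\D$-definable cocycle on $\Aut_\D(M_\D/K)$ with restriction from the $\Pi$-prime model. Here we use that the $\D$-differential closure of $K$ embeds into the $\Pi$-differential closure, and that $\Pi$-automorphisms restrict to $\D$-automorphisms. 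Well-definedness is immediate, since a $\D$-definable isomorphism of torsors is $\Pi$-definable. The plan is to show that $\Phi$ is injective and surjective.

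\textbf{Surjectivity.} Let $X\circlearrowright G$ be a $\Pi$-torsor over $K$. Since $G$ is a $\D$-algebraic group and $X$ is in definable bijection with $G$ over parameters, $X$ has $\Pi$-type bounded by $\D$ in the appropriate sense. I would first reduce to $G$ connected, or else work with the finitely many cosets. Then I would apply the equivalence of categories in Theorem~\ref{thm: equiv of cats}(2), using the parameterized D-group $(G,0)$ with respect to $\DD/\D$. Since $G$ is defined by $\D$-equations over $K$, we may arrange that $G=(G',t)^\#$ for a $\D$-group $G'$, and $G$ is itself $\D$-closed. This gives a parameterized D-torsor $(W,s)\circlearrowright(G',t)$ with $(W,s)^\#\cong X$ over $K$.

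This is the main obstacle: we need $X$ to come from a genuinely $\D$-definable torsor for the same $G$, not for its $\D$-closure. The cleaner route is to use Fact~\ref{embedding}(2) in the reduct. Working in $(\U,\Pi)$, take the $\D$-closure over $K$ of the image of $X$ in an algebraic torsor $W\circlearrowright H$, where $G\le H$. By Fact~\ref{Dvarprop}(1), this $\D$-closure $\bar X$ is a $\D$-subvariety of $W$ that is stable under the action of $\bar G=G$, because $G$ is $\D$-closed. A density argument should show that $G$ acts regularly on $\bar X$: for $x_1,x_2\in X$ we have $x_1^{-1}x_2\in G$ (computed in $H$), and the equation $x_1^{-1}x_2\in G$ is $\D$-closed in $(x_1,x_2)$ while $X\times X$ is $\D$-dense in $\bar X\times\bar X$. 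Thus $\bar X$ is a $\D$-definable torsor for $G$ over $K$ containing $X$. Since $X$ is a $G$-orbit inside it, $X=\bar X$, and so $\Phi([\bar X])=[X]$.

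\textbf{Injectivity.} Suppose $V_1,V_2$ are $\D$-torsors over $K$ that are $\Pi$-definably isomorphic over $K$. Then the $\D$-torsor $\mathrm{Isom}_G(V_1,V_2)$, which is a torsor for the twisted group, or equivalently the $\D$-definable set of $G$-equivariant maps encoded by pairs $(v_1,v_2)$ modulo the diagonal action, has a $K$-rational point in the $\Pi$-sense, namely a $\Pi$-definable point over $K$. This point lies in $\dcl_\Pi(K)$ intersected with a $\D$-definable set. The key observation is that a $K$-point is just an element of $K$, and it is fine to regard $K$ as a $\D$-field. So the isomorphism, which is given by a $K$-point of the $\D$-definable quotient $(V_1\times V_2)/G$, is automatically a $\D$-definable isomorphism over $K$ in the reduct. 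For this we need $\D$-elimination of imaginaries, which holds in $\DCF_{0,|\D|}$. Hence $[V_1]=[V_2]$ in $\HH^1_\D$. Canonicity follows from the naturality of the construction.
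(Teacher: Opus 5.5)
Your final argument is correct: the quotient argument for injectivity together with the ``cleaner route'' for surjectivity proves the statement. Both halves, however, differ from the paper's proof.

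\textbf{Injectivity.} The paper embeds the two $\D$-torsors in algebraic torsors (Fact~\ref{embedding}). It writes the $\Pi$-isomorphism as $\rho(x)=f(a)\cdot h(a,x)$, with $h$ the ACF-definable difference map, so $\rho$ is $\D$-definable over $a,f(a)$. It then checks $\Aut_\Pi(\U/K)$-invariance. You instead parametrize equivariant maps by the $\D$-definable quotient $(V_1\times V_2)/G$ and observe that $f$ is a single $\Aut_\Pi(\U/K)$-fixed point of it. This is more intrinsic and avoids the embedding altogether.

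\textbf{Surjectivity.} The paper realizes the image $X\subseteq V$ as the fibre of the $\D$-definable quotient $V\to V/G$ over an $\Aut_\Pi(\U/K)$-fixed point; this only uses that $G$ is $\D$-definable inside $H$. You show $X=\bar X$ by a density argument. That needs more input: a $\D$-definable embedding $G\le H$ (which the paper also tacitly uses), Kolchin-closedness of $\D$-definable subgroups, continuity of the action, and density of products. In return it gives the extra fact that $X$ is $\D$-closed in $W$. Your route can be shortened: for any $x_0\in X$ one has $X=x_0\cdot G$, which is $\D$-definable over $x_0$ outright, and the invariance step then finishes. At that point it is essentially the paper's argument.

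\textbf{The common invariance step.} You should state this step explicitly rather than writing ``a $K$-point is just an element of $K$''. The step is: a set that is $\D$-definable with parameters and $\Aut_\Pi(\U/K)$-invariant is $\D$-definable over $K$. The reason is that, by $\D$-elimination of imaginaries, its canonical parameter lies in $\operatorname{dcl}_\Pi(K)=K$. This is also what makes your $\bar X$ defined over $K$. Product density holds for $\D$-closures taken over $\U$, and these agree with closures over $K$ only because $X$ is $\Aut_\Pi(\U/K)$-invariant.

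\textbf{Delete the first paragraph of your surjectivity argument.} Its premise is false. A $\D$-algebraic group viewed as a $\Pi$-definable set need not have $\Pi$-type bounded by $\D$. For $G=\mathbb G_a$, the $\DD$-derivatives of a generic point are unconstrained, so the $\Pi$-type is $m>|\D|$. Hence Theorem~\ref{thm: equiv of cats} does not apply to $X\circlearrowright G$.

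\textbf{Smaller points.}
\begin{enumerate}
\item Fact~\ref{Dvarprop}(1) is irrelevant to forming $\bar X$.
\item Fact~\ref{embedding}(2) must be applied in $(\U,\Pi)$, since $X$ is only $\Pi$-definable. What you need from the reduct is just that $G\le H$ is $\D$-definable.
\item Your cocycle-level description of the map by ``restriction'' is not literally well defined. The group $\Aut_\Pi(K^{\Pi\text{-}\diff}/K)$ need not preserve a chosen copy of $K^{\D\text{-}\diff}$; the paper's remark uses $\sigma\mapsto h(a,\sigma(a))$ instead.
\end{enumerate}
None of these points affects your final argument.
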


\begin{proof}
For $\alpha\in \HH^1_\Delta(K,G)$, we may write $\alpha=[X]_\Delta$ where $X$ is a $\Delta$-torsor over $K$ and $[X]_\Delta$ denotes the class of $\Delta$-torsors over $K$ that are $\D$-isomorphic to $X$ over $K$. Of course, such an $X$  is also a $\Pi$-torsor and we may similarly write $[X]_\Pi$ (for the class of $\Pi$-torsors over $K$ that are $\Pi$-isomorphic to $X$ over $K$). We define 
$$\phi: \HH^1_\Delta(K,G)\to \HH^1_\Pi(K,G)$$
in the most natural (or canonical) way, that is
$$\alpha = [X]_\Delta\mapsto [X]_\Pi.$$
Note that $\phi$ is well defined. Indeed, if $Y$ is a $\D$-torsor over $K$ which is $\D$-isomorphic to $X$ and $V$ is a $\Pi$-torsor $\Pi$-isomorphic to $X$, then $Y$ and $V$ are $\Pi$-isomorphic (as $\Pi$-torsors). It follows that if $[X]_\D=[Y]_\D$ then $[X]_\Pi=[Y]_\Pi$.

We now prove $\phi$ is injective. Assume $\phi(\alpha)=\phi(\beta)$ with $\alpha=[X]_\D$ and $\beta=[Y]_\D$. In other words, we are assuming $[X]_\Pi=[Y]_\Pi$; namely, there is an $\Pi$-isomorphism $f:X\to Y$. To show injectivity it suffices to show that there is a $\D$-isomorphism over $K$.

Using Fact~\ref{embedding}, there are algebraic torsors $V\circlearrowright H$ and $W\circlearrowright H$ such that we may view $X\circlearrowright G$ as a substorsor of $V\circlearrowright H$ and $Y\circlearrowright G$ as a substorsor of $W\circlearrowright H$.
Let $h:V\times V \to H$ be the function taking $(x,y)$ to the unique element of $H$ mapping $x\mapsto y$ (note that this is ACF-definable). Fix $a\in X$ and consider the map $\rho:X\to Y$ given by 
$$\rho(x)=f(a)\cdot h(a,x)$$
where $\cdot$ is the action of $H$ on $W$ (so ACF-definable). Note that, since $X$ and $Y$ are $\D$-definable, the map $\rho$ is $\D$-definable. Also, a straightforward computation shows that it is a torsor isomorphism. All that remains to show is that $\rho$ is definable over $K$. To see this, it suffices to show that for all $\s\in Aut_{\Pi}(\U/K)$ we have $\sigma(\rho(x))=\rho(\sigma(x))$ for all $x\in X$. We have
\begin{align*}
 \sigma(\rho(x)) & = \sigma(f(a)h(a,x)) \\
 & = f(\sigma(a))h(\sigma(a), \sigma(x)) \\
 & = f(\sigma(a)h(\sigma(a), \sigma(x))) \\
 & = f(ah(a,\sigma(x))) \\
 & = f(a) h(a,\sigma(x)) \\
 & = \rho(\sigma(x))
\end{align*}
as desired. 

We now prove surjectivity. Let $\mu\in \HH_\Pi^1(K,G)$ then $\mu=[U]_\Pi$ for $U$ a $\Pi$-torsor. We need to produce a $\D$-torsor $\Pi$-isomorphic to $U$ over $K$. By Fact~\ref{embedding}, there is a $K$-definable embedding from $U$ to an algebraic torsor $V$ for $H$ (an algebraic group where $G$ is embedded). Let $X$ be the image of $U$ inside $V$. We will be done once we show that $X$ is $\D$-definable. Consider the equivalence relation on $V$
$$a\sim b\quad \iff \quad \text{there is $g\in G$ such that $a\cdot g=b$}$$
since $G$ is $\D$-definable and the action $a\cdot g$ above is ACF-definable, we have that the above is a $\D$-definable equivalence relation. Hence, we may identify $V/G$ with a $\D$-definable set over $K$ and the quotient map 
$p:V\to V/G$
is $\D$-definable over $K$. Since $X$ is an equivalence class, the point $\gamma=p(X)\in V/G$ is a $K$-point. Thus $X=p^{-1}(\gamma)$ is $\D$-definable over $K$, as desired.
\end{proof}

\begin{remark} 
%\begin{enumerate}
    %\item 
    Let us note that the map above $\phi: \HH^1_\Delta(K,G)\to \HH^1_\Pi(K,G)$ can also be described in terms of definable cocycles. Namely, let 
    $$s:\Aut_{\D}(K^{\D\text{-}\diff}/K)\to G(K^{\D\text{-}\diff})$$ 
    be a $\D$-definable cocycle, say definability data $h(x,y)$ and $a$. Let $\phi(x)$ be a formula isolating $tp_\D(a/K)$. Then we know that for all $b,c,d$ realising $\phi$ in $K^{\D\text{-}\diff}$, we get that $h(b,c)$ is defined and in $G(K^{\D\text{-}\diff})$ and
    \begin{equation}\label{cocyclecondition1}
    h(b,c)h(c,d)=h(b,d)
    \end{equation}
    Define $t:\Aut_{\Pi}(K^{\Pi\text{-}\diff}/K)\to G(K^{\Pi\text{-}\diff})$ by
    $$t(\sigma)=h(a,\sigma(a))$$ 
    recalling also that we may take $K^{\D\text{-}\diff} \leq K^{\Pi\text{-}\diff}$. It is easy to check, using \eqref{cocyclecondition1}, that $t$ is indeed a $\Pi$-definable cocycle: namely, any realization of $\tp_{\Pi}(a/K)$ must satisfy $\tp_{\D}(a/K)$. Now take the class of this cocycle in $\HH_\Pi^1(K,G)$. 

\end{remark}

\begin{notation}\label{not: themap}
    Given a parameterized $\DD/\D$-group $(G,t)$ over $K$, we will denote by
    $$\Phi:\HH^1_\Pi(K,(G,t)^\#)\to \HH^1_\D(K,G)$$
    the map obtained by composing the induced map $\HH^1_{\Pi}(K,(G,t)^\#) \xrightarrow{\iota^1} \HH^1_{\Pi}(K,G)$ from Fact~\ref{fact: LES} with the isomorphism $\HH^1_\Pi(K,G)\to \HH^1_\D(K,G)$ obtained in Theorem~\ref{thm: param kolchin}.
\end{notation}

We now observe that the equivalence of categories between parameterized $\DD/\D$-torsors and $\Pi$-torsors with $\Pi$-type bounded by $\D$ stated in Theorem~\ref{thm: equiv of cats} gives the following cohomological result:

\begin{lemma}\label{cor: coho conseq. proved}
     %Let $H$ be a $\Pi$-differential algebraic group defined over $K$, that is a $K$-definable group in $DCF_{0,m}$. Assume that $\D \subset \Pi$ with $|\D| < |\Pi|$ bounding the $\Pi$-type of $H$. Then $\D$ can be completed as a basis $\DD\cup \D$ of $span_{K^{\Pi}}\Pi$ so that there is a parametric $\D/\DD$-group $(G,t)$ with $H \cong (G,t)^\#$ and we have a exact sequence 
     Let $(G,t)$ be a connected parameterized $\DD/\D$-group. Then, the equivalence of categories from Theorem~\ref{thm: equiv of cats} yields a natural map 
     $$\Lambda:\HH^1_{\Pi}(K,(G,t)^\#) \to \HH^1_{\D}(K,G)$$
    %$$(G/(G,t)^\#)(K) \xrightarrow{\delta^1} \HH^1_{\Pi}(K,(G,t)^\#) \xrightarrow{\iota} \HH^1_{\D}(K,G)$$ 
\end{lemma}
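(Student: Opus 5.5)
To define $\Lambda$ I will work with torsors rather than cocycles, using the identification $\mathcal P_{\defin}(K,-)\cong \HH^1_{\defin}(K,-)$ recalled in Section~\ref{sec:modPrelims}. Take a class in $\HH^1_\Pi(K,(G,t)^\#)$ and represent it by a $K$-definable $\Pi$-torsor $X\circlearrowright (G,t)^\#$. Since $(G,t)$ is connected, $(G,t)^\#$ is a connected $\Pi$-algebraic group whose $\Pi$-type is bounded by $\D$, because $\D$ bounds the $\Pi$-type of every $\#$-point. Hence $X\circlearrowright (G,t)^\#$ is an object of the category $\mathcal F$ of Theorem~\ref{thm: equiv of cats}(2). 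One point needs a short check: every $\Pi$-generic $b$ of $X$ is $\Pi$-interdefinable over $K$ with an element of $(G,t)^\#$ over $K(b)$. So $X$ has $\Pi$-type bounded by $\D$, and $X$ really lies in $\mathcal F$.

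By essential surjectivity of the $\#$-points functor, there is a parameterized $\DD/\D$-torsor $(V,s)\circlearrowright(G',t')$ over $K$ whose $\#$-points are isomorphic in $\mathcal F$ to $X\circlearrowright (G,t)^\#$. I then need to replace $(G',t')$ by $(G,t)$. Part (1) of Theorem~\ref{thm: equiv of cats} is full and faithful on groups, so the group isomorphism $(G',t')^\#\cong (G,t)^\#$ lifts to a $\DD/\D$-isomorphism $(G',t')\cong(G,t)$. Transporting the action along it gives $(V,s)\circlearrowright(G,t)$ with $(V,s)^\#\cong X$ over $K$. Forgetting $s$ leaves a $\D$-torsor $V\circlearrowright G$ over $K$, and I define
$$\Lambda([X]_\Pi)=[V]_\D.$$

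The main step is well-definedness, meaning the result must not depend on the choices of $X$ and of $(V,s)$. Suppose $X\cong X'$ as $\Pi$-torsors over $K$, and $(V,s)$, $(V',s')$ are the corresponding D-torsors. Composing gives an isomorphism $(V,s)^\#\cong (V',s')^\#$ in $\mathcal F$, compatible with the identity on $(G,t)^\#$. Fullness of the $\#$-functor on $\mathcal E$ (part (2) of Theorem~\ref{thm: equiv of cats}) lifts this to a $\DD/\D$-morphism $(V,s)\to(V',s')$ over $K$ preserving the $G$-action. An action-preserving morphism of torsors over $K$ is automatically bijective, so in particular we obtain a $\D$-isomorphism $V\cong V'$ of $G$-torsors over $K$. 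Hence $[V]_\D=[V']_\D$ and $\Lambda$ is well defined.

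The delicate part is the bookkeeping: morphisms in $\mathcal E$ and $\mathcal F$ are required to fix the group, so the identification of $(G',t')$ with $(G,t)$ has to be made compatibly with the actions. I would do this by building the transport of structure explicitly, as above, before invoking fullness. Naturality then follows because the construction is functorial in the torsor. Sending the trivial torsor $(G,t)^\#$ to $(G,t)$ shows that $\Lambda$ preserves base points. I would expect $\Lambda$ to agree with $\Phi$ of Notation~\ref{not: themap} via part (4) of Remark~\ref{rmk: subtorsor}, since $(V,s)^\#\subseteq V$ is a subtorsor, but that comparison is not needed to define $\Lambda$.
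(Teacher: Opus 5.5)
Your proposal is the paper's proof with the bookkeeping written out. The shared steps are:
\begin{enumerate}
\item represent the class by a $\Pi$-torsor $X$ for $(G,t)^\#$;
\item check that $\D$ bounds its $\Pi$-type;
\item use Theorem~\ref{thm: equiv of cats} to produce $(V,s)\circlearrowright(G,t)$ with $(V,s)^\#\cong X$ over $K$;
\item set $\Lambda([X]_\Pi)=[V]_\D$;
\item get well-definedness from fullness of the $\#$-functor.
\end{enumerate}
Your transport of structure from $(G',t')$ to $(G,t)$, and your remark that equivariant maps of torsors are bijective, fill in details the paper leaves implicit.

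The one step that needs repair is your ``short check'' that $\D$ bounds the $\Pi$-type of $X$. Interdefinability with a point of $(G,t)^\#$ is only available relative to a base point $a\in X$. Such an $a$ can be found in $K^{\Pi\text{-}\diff}$, but in general not in $K$. Writing $b=a*g$, the points $b$ and $g$ are interdefinable over $K\langle a\rangle_\Pi$, not over $K$. Interdefinability over $K$ would amount to a $K$-birational identification of $X$ with a $\Pi$-subvariety of $(G,t)^\#$, and the torsor structure does not provide that when $X(K)=\emptyset$.

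So your argument only shows that $\D$ bounds the $\Pi$-type of $b$ over $K\langle a\rangle_\Pi$. Descending to $K$ is not a Kolchin-polynomial triviality, since ``bounds'' means finite $\D$-generation. You need $b$ independent from $a$ over $K$ plus a linear-disjointness argument. That is exactly the internality lemma \cite[Lemma 1.9]{LS1}, which the paper invokes at this point.
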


\begin{proof}
    Let $Q$ be a $\Pi$-algebraic torsor for $(G,t)^\#$ over $K$. Since $\D$ bounds the $\Pi$-type of $(G,t)^\#$ and $Q$ is internal to $(G,t)^\#$, by \cite[Lemma 1.9]{LS1}, we have that $\D$ also bounds the $\Pi$-type of $Q$. We can then apply Theorem \ref{thm: equiv of cats} to see that there is a parameterized $\DD/\D$-torsor $(V,s)\circlearrowright (G,t)$ with $(V,s)^\# \cong Q$ as $\Pi$-torsors over $K$. We then define $\Lambda$ by sending the class $[Q]_\Pi$ to the class of $[V]_\D$, that this map is well-defined uses again the equivalence of categories in Theorem~\ref{thm: equiv of cats}. 

\end{proof}

%\begin{remark}\label{rmk: ses application}
%    Let $L/K$ be a differential Galois extension for a $\DD/\D$-parametric torsor $(V,s)$ for a $\DD/\D$-parametric group $(G,t)$. We have an injection 
%    $$1 \to \HH^1_{\Pi}(L/K,(G,t)^\#(L)) \to  \HH^1_{\Pi}(K^{\diff}/K,(G,t)^\#(K^{\diff}))$$ and the class of  $H^1_{\Pi}(K^{\diff}/K,(G,t)^\#(K^{\diff}))$ representing the torsor $(V,s)^\#(K^{\diff})$ is in the image of this injection. 
%\end{remark}

%\begin{proof}
%    We apply the short exact sequence in Kolchin's constrained differential Galois cohomology, appearing as Theorem 1 of Chapter 7 Section 2 of \cite{Kolchin2}, or the short exact sequence in definable Galois cohomology in $DCF_{0,m}$, appearing as Theorem 4.3 of \cite{Me} to the normal extension $L/K$ in $K^{\diff}$.

%    By definition $L$ is generated by a point of $(V,s)^\#$, so the torsor $(V,s)^\#$ contains an $L$-point and this means exactly that the associated cohomology class is in the image of the above injection. 
%\end{proof}

We conclude this section by pointing out that the two maps defined above from $\HH^1_\Pi(K,(G,t)^\#)\to \HH^1_\D(K,G)$ actually coincide.

\begin{lemma}
Let $(G,t)$  be a connected parameterized $\DD/\D$-group. Then, the two maps above $\Phi$ and $\Lambda$ from 
$$\HH^1_\Pi(K,(G,t)^\#)\to \HH^1_\D(K,G)$$
in Notation~\ref{not: themap} and Lemma~\ref{cor: coho conseq. proved}, respectively,
coincide. 
%More precisely, let $\Psi:\HH^1_\Pi(K,G^\#)\to \HH^1_\Pi(K,G)$ be the usual map in cohomology (given by the inclusion of $G^\#$ in $G$) and let $\chi$ be the inverse of the map from Theorem~\ref{thm: param kolchin}. Also let $\iota$ be the map from Lemma~\ref{cor: coho conseq.}. We then have that $\iota=\chi\circ\Psi$.
\end{lemma}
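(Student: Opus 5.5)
To show that $\Phi$ and $\Lambda$ agree, we fix a class in $\HH^1_\Pi(K,(G,t)^\#)$ and compute both images on the same representative. By Lemma~\ref{cor: coho conseq. proved} (which rests on Theorem~\ref{thm: equiv of cats}), every class can be written as $[(V,s)^\#]_\Pi$ for some parameterized $\DD/\D$-torsor $(V,s)\circlearrowright (G,t)$ over $K$. Since $\Lambda$ is well-defined, we may compute it on this particular representative, which gives
$$\Lambda([(V,s)^\#]_\Pi)=[V]_\D.$$
It therefore suffices to show that $\Phi([(V,s)^\#]_\Pi)=[V]_\D$ as well.

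By Notation~\ref{not: themap}, $\Phi$ is the composite of $\iota^1:\HH^1_\Pi(K,(G,t)^\#)\to \HH^1_\Pi(K,G)$ with the inverse of the canonical isomorphism $\phi:\HH^1_\D(K,G)\to\HH^1_\Pi(K,G)$ from Theorem~\ref{thm: param kolchin}, where $\phi([X]_\D)=[X]_\Pi$. We check two things.

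\begin{enumerate}[(a)]
\item \emph{Computing $\iota^1$.} The $\Pi$-torsor $(V,s)^\#\circlearrowright (G,t)^\#$ is a $K$-definable subtorsor of $V\circlearrowright G$ in the sense of part (4) of Remark~\ref{rmk: subtorsor}. Indeed, $(V,s)^\#\subseteq V$ and $(G,t)^\#\le G$. Moreover, the action of $(G,t)^\#$ on $(V,s)^\#$ is the restriction of $*$, as recorded right after the definition of parameterized D-torsors. Here $V$ is regarded as a $\Pi$-definable torsor over $K$. Remark~\ref{rmk: subtorsor}(4) then gives
$$\iota^1([(V,s)^\#]_\Pi)=[V]_\Pi.$$
One detail needs a sentence: $(V,s)^\#$ must be nonempty with a point in the prime model, so that the construction of Remark~\ref{rmk: subtorsor}(3) applies. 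This holds because $(V,s)^\#$ is $\D$-dense in $V$ by Fact~\ref{Dvarprop}, and because definable torsors over $K$ always have points in the differential closure.

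\item \emph{Pulling back through $\phi$.} Since $V$ is a $\D$-torsor for $G$ over $K$, the definition of $\phi$ gives $\phi([V]_\D)=[V]_\Pi$. Hence
$$\phi^{-1}([V]_\Pi)=[V]_\D,$$
and so $\Phi([(V,s)^\#]_\Pi)=[V]_\D=\Lambda([(V,s)^\#]_\Pi)$.
\end{enumerate}

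The only genuinely delicate point is step (a). One must make sure that the identification in Remark~\ref{rmk: subtorsor}(4) produces $V$ itself up to a $K$-definable $\Pi$-isomorphism of $G$-torsors, and not merely some torsor in the same class under a different convention. Concretely, the map $\rho:Z\times G\to V$ given by $(a,g)\mapsto a*g$ is defined over $K$ and compatible with the right $G$-action, so it induces an isomorphism from the $\iota^1$-image torsor onto $V$. A second check is that the isomorphism $(V,s)^\#\cong Q$ coming from the equivalence of categories is an isomorphism of $(G,t)^\#$-torsors. This is needed so that the class of $Q$ equals the class of $(V,s)^\#$ before $\iota^1$ is applied, and it holds by Theorem~\ref{thm: equiv of cats}(2). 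The remainder of the argument is bookkeeping.
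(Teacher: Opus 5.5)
Your proposal is correct and follows the paper's own proof: you represent the class as $[(V,s)^\#]_\Pi$ via the equivalence of categories, read off $\Lambda$, use part (4) of Remark~\ref{rmk: subtorsor} to get $\iota^1([(V,s)^\#]_\Pi)=[V]_\Pi$, and then note that the isomorphism of Theorem~\ref{thm: param kolchin} matches $[V]_\Pi$ with $[V]_\D$. The extra checks you add (a point of $(V,s)^\#$ in the prime model, and the map $\rho$) are exactly what the paper packages inside that remark.
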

\begin{proof}
Let $Q$ be a $\Pi$-torsor for $(G,t)^\#$. We have seen in the proof of Lemma~\ref{cor: coho conseq. proved} that we may assume that $Q=(V,s)^\#$ for some parameterized $\DD/\D$-torsor $(V,s)\circlearrowright (G,t)$. Then, by definition, $\Lambda([Q]_\Pi)=[V]_\D$. We must show that $\Phi([Q]_\Pi)=[V]_\D$. Since $Q\circlearrowright (G,t)^\#$ is a $\Pi$-subtorsor of $V\circlearrowright G$, by part (4) of Remark \ref{rmk: subtorsor}, the induced map in cohomology $\iota^1$ maps $[Q]_\Pi$ to $[V]_\Pi$. Noting that the isomorphism $\HH^1_\Pi(K,G)\to \HH_\D^1(K,G)$ from Theorem~\ref{thm: param kolchin} maps $[V]_\Pi$ to $[V]_\D$, we see that $\Phi([Q]_\Pi)$ is indeed $[V]_\D$. 

%$\Psi(X)$ is $\Pi$-isomorphic to $V$. 

%We repeat the argument just to be self contained: Let us recall how the map $\Psi$ is defined at the level of torsors (at the level of cocycles it is obvious, as any cocycle will have its image in $G^\#\subseteq G$). For any $a\in X$ in a $\Pi$-closure of $K$, one considers: let $Z$ be the realisations of $tp^\Pi(a/K)$ and as usual consider 
%$$Z\times G/\sim$$
%where $\sim$ is the usual equivalence relation; namely. $(a,g)\sim (b,k)$ iff $g=h(a,b)k$ where $h(a,b)$ is the unique element of $G^\#$ mapping $a\mapsto b$. In fact we can take $h$ being induced by the torsor structure on $V$, that is, for $x,y\in V$, we set $h(x,y)$ to the unique element of $G$ mapping $x\mapsto y$. We see now that $\Psi(X)=[Z\times G/\sim]_\Pi$.

%Now consider the map $\rho:Z\times G\to V$ given simply by $(a,g)\mapsto a\cdot g$. Clearly $\rho$ is onto. One can now easily check that $\rho$ preserves the equivalence relation and hence induces an $\Pi$-isomorphism over $K$ between $Z\times G/\sim$ and $V$; in other words, $\Psi(X)=[V]_\Pi$, as desired.
\end{proof}

%\begin{remark}
%    Note that the argument in the above lemma can be used to show the following: if $(V,G)$ is a definable subtorsor of $(W,H)$ then $\Psi(V)=[W]_{\defin}$ (where $\Psi:\HH^1_{\defin}(K,G)\to \HH^1_{\defin}(K,H)$ is the usual cohomology map with $G\leq H$). %Perhaps we can have this in the general model-theoretic section??
%\end{remark}

\section{The main result}\label{sec:mainresult}

We recall the setting. We are working in $\DCF_{0,m}$ where $\Pi$ is the set of $m$ commuting derivations. 
%We fix a sufficiently saturated model $(\mathcal{U},\Pi)$ and a small differential subfield $K$. 
Recall that for $\D \subseteq \Pi$, by $\HH^1_{\D}$ we mean definable Galois cohomology in the reduct $\DCF_{0,|\D|}$.

We now prove the main theorem of the paper.

\begin{theorem}%[{\bf Theorem A: Main result}]
\label{thm:mainresult}
Let $(\U,\Pi)$ be a monster model of DCF$_{0,m}$ and $K$ a (small) differential subfield.
%Let $K$ be a partial differential field. 
\begin{enumerate}
    \item Let $\DD\cup\D$ be a partition of $\Pi$ with $\DD$ nonempty and $(V,s)\circlearrowright (G,t)$ a parameterized $\DD/\D$-torsor. A solution $a$ to the $\#$-differential equation of $(V,s)$ generates a $(G,t)^\#$-strongly normal extension $L=K\langle a \rangle_\Pi$ if and only if $(G,t)^\#(L)=(G,t)^\#(K)$.
    %Let $L = K\langle a \rangle_\Pi$ be a differential extension where $a$ to the $\#$-equation on a parametric $\DD/\D$-torsor $(V,s)$ for $(G,t)$, a parametric $\DD/\D$-group defined over $K$. The following are equivalent:
        %\begin{enumerate}
         %   \item $L/K$ is a differential Galois extension of the torsor $(V,s)\circlearrowright(G,t)$
          %  \item $(G,t)^\#(L) = (G,t)^\#(K)$
           % \item $\tp(a/K)$ is weakly orthogonal to $(G,t)^\#$
           % \item $L/K$ is a $(G,t)^\#$-generalized strongly normal extension.
        %\end{enumerate}
    \item Let $L$ be a generalized strongly normal extension of $K$ %Let $L = K \l b \r_\Pi$ be a generalized $X$-strongly normal extension 
    with $L/K$ regular (so the differential Galois group is connected). Then, after a $GL_m(\mathbb Q)$-transformation of $\Pi$, there exists a partition $\DD\cup\D$ with $\D$ witnessing the $\Pi$-type of $L/K$ and a parameterized $\DD/\D$-torsor $(V,s)\circlearrowright (G,t)$ over $K$ such that $(G,t)^\#$ is the differential Galois group of $L/K$ and $L$ is a differential Galois extension for $(V,s)$. In fact, $L$ is isomorphic the $\Pi$-function field of $(V,s)^\#$. 
    %Let $\D$ be a set of linearly independent elements of $\text{span}_{K^\Pi}(\Pi)$ such that $\D$ bounds the $\Pi$-type of $X$ and assume $|\D| < |\Pi|$. Then we can extend $\D$ to a basis $\DD \cup \D$ of $\text{span}_{K^\Pi}(\Pi)$ and there exists a parameterized $\DD/\D$-torsor $(V,s)$ for $(G,t)$ and a logarithmic differential equation 
    %$$(*) \quad \nabla^V_\DD(x) = s(x)$$ 
    %such that $L=K\l a \r$ for $a$ a solution of (*). In other words, $L$ is a differential Galois extension for the logarithmic differential equation (*) on $(V,s)$.
    
    Moreover, $L/K$ is the differential Galois extension for a log-differential equation on $(G,t)$ if and only if the image of the class associated to the $\Pi$-torsor $(V,s)^\#\circlearrowright (G,t)^\#$
    %the torsor produced from $L/K$ via the torsor theorem, Fact \ref{fact: torsor theorem} (2), is trivial 
    under the map (from Notation~\ref{not: themap})
    %$$\iota^1:\HH^1_{\Pi}(K,(G,t)^\sharp) \to \HH^1_{\Pi}(K,G)$$ arising from Fact \ref{fact: LES}, the long exact sequence in definable Galois cohomology, or equivalently by Theorem \ref{thm: param kolchin}, the class is trivial under the map 
    $$\Phi:\HH^1_{\Pi}(K,(G,t)^\#) \to \HH^1_{\D}(K,G)$$
    is trivial. In particular, this occurs when $(K,\D)$ is $\D$-closed as then $\HH^1_\D(K,G)$ is trivial.
    %given by Corollary \ref{cor: coho conseq.} 
    %if and only if .
    %generated by a solution of a logarithmic differential equation on $(G,t)$, $$(**) \quad \nabla^G_\DD(x) = \alpha \cdot t(x)$$ with $\alpha \in \tau_{\DD/\D} G_e(K)$, an integral $K$-point of $(G,t)$.
\end{enumerate}
\end{theorem}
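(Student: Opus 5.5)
Part (1) is essentially Lemma~\ref{lem: part a} combined with Fact~\ref{weak orth fact}. Let $a$ be a solution of $\nabla^V_\DD(x)=s(x)$. The computation in the proof of Lemma~\ref{lem: part a} shows that $tp(a/K)$ is strongly internal to $(G,t)^\#$ whatever else holds. So $K\langle a\rangle_\Pi$ is $(G,t)^\#$-strongly normal exactly when $tp(a/K)$ is weakly orthogonal to $(G,t)^\#$. By Fact~\ref{weak orth fact} this is equivalent to $(G,t)^\#(L)=(G,t)^\#(K)$. This is also the content of Theorem~\ref{thm: cohomological fact}(1) specialised to $T=\DCF_{0,m}$, using that $(V,s)^\#$ is a $K$-definable torsor for $(G,t)^\#$.

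For part (2), write $L=K\langle b\rangle_\Pi$, generalized strongly normal. By the Torsor Theorem (Fact~\ref{fact: torsor theorem}(2)) the Galois group is a $K$-definable group $\mathcal H$, i.e.\ a $\Pi$-algebraic group, and the realisation set $Q$ of $q=tp(b/K)$ is a $K$-definable right torsor for $\mathcal H$. Regularity of $L/K$ gives $\acl(K)\cap L=K$, so $\mathcal H$ is connected. By Fact~\ref{Koltype}, after a $GL_m(\mathbb Q)$-transformation there is $\D$ witnessing the $\Pi$-type of $b/K$; set $\DD=\Pi\setminus\D$.

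We may assume $|\D|<m$, so that $\DD$ is nonempty. The degenerate case where the $\Pi$-type equals $m$ has to be excluded, presumably because internality forces positive codimension; I would check this or simply assume it. Since $Q$ is $\mathcal H$-internal, and conversely $\mathcal H$ lives in $\dcl(K,b,b')$ for realisations $b,b'$, \cite[Lemma 1.9]{LS1} shows that $\D$ bounds the $\Pi$-type of both $\mathcal H$ and $Q$. Theorem~\ref{thm: equiv of cats} then yields a connected parameterized $\DD/\D$-group $(G,t)$ with $(G,t)^\#\cong\mathcal H$, and a parameterized torsor $(V,s)\circlearrowright(G,t)$ with $(V,s)^\#\cong Q$ as $\Pi$-torsors over $K$. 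Transporting $b$ along this isomorphism gives $a\in(V,s)^\#$ with $K\langle a\rangle_\Pi=L$. Weak orthogonality gives condition (ii) of Definition~\ref{def: extension for torsor}, so $L$ is a differential Galois extension for $(V,s)$. Since $q$ is the generic type of $Q$, because $Q$ is exactly its set of realisations, $L$ is the $\Pi$-function field of $(V,s)^\#$.

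For the moreover clause, apply Theorem~\ref{thm: cohomological fact}(2) with $H=(G,t)^\#\le G$. It says that $\iota^1([Q]_\Pi)$ is trivial if and only if $Q$ is $K$-definably isomorphic to a coset $cH$ with $c\in G$ and $cH$ defined over $K$. In that case $L=K\langle c\rangle_\Pi$ with $c$ a $(G,H)$-primitive. Composing with the isomorphism of Theorem~\ref{thm: param kolchin}, triviality of $\iota^1([Q]_\Pi)$ is the same as triviality of $\Phi([Q]_\Pi)$.

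It remains to translate between $K$-definable cosets and log equations, and I expect this to be the main obstacle. Forward direction: take $\alpha=\nabla^G_\DD(c)\cdot t(c)^{-1}$. For $h\in(G,t)^\#$ we have $\nabla(ch)\,t(ch)^{-1}=\nabla(c)\,t(h)\,t(h)^{-1}t(c)^{-1}=\alpha$, so $\alpha$ depends only on the coset. Hence $\alpha$ is fixed by $\Aut(\U/K)$, is a $K$-point of $\tau_{\DD/\D}G_e$, and $c$ solves the $\alpha$-log equation. Conversely, if $L=K\langle c\rangle_\Pi$ with $c$ a solution of an $\alpha$-log equation over $K$, then the solution set $c(G,t)^\#$ is a $K$-definable coset. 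It contains the realisations of $tp(c/K)$, and it is a torsor isomorphic to that realisation set because the Galois group is all of $(G,t)^\#$. By Remark~\ref{rmk: subtorsor}(1) and exactness, its class lies in the image of $\delta^1$, hence in $\ker\iota^1$.

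The last claim follows from Proposition~\ref{prop: omar's triviality result}, or directly: when $(K,\D)$ is $\D$-closed, every $\D$-torsor over $K$ has a $K$-point, so $\HH^1_\D(K,G)$ is trivial.
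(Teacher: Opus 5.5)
Your route is the paper's: part (1) is Lemma~\ref{lem: part a}, and part (2) goes through the Torsor Theorem, Fact~\ref{Koltype}, Theorem~\ref{thm: equiv of cats} and Theorem~\ref{thm: cohomological fact}, with $\alpha$ the constant value of $\nabla^G_\DD(x)\cdot t(x)^{-1}$ on the coset. You also supply details the paper skips, such as connectedness and the fact that $\D$ bounds the $\Pi$-type of the Galois group.

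The step you flagged is a genuine gap, though. You must show $\Pi$-type$(L/K)<m$; otherwise $\DD=\Pi\setminus\D$ is empty and parameterized $\DD/\D$-torsors are not defined. This is something to be proved, not a hypothesis you may assume. Your suggested reason does not work, because internality by itself bounds nothing. Strong internality to $X=\U$ is vacuous, and a priori the Galois group could be a group like $\mathbb G_a$ of $\Pi$-type $m$.

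The missing idea, which the paper uses, is isolation. By Fact~\ref{fact: torsor theorem}(1), which is where weak orthogonality enters, $q=tp(b/K)$ is isolated. Suppose $\Pi$-type$(b/K)=m$. Then $L/K$ has positive $\Pi$-transcendence degree, so some coordinate $e$ of $b$ is $\Pi$-transcendental over $K$. Since $e\in\dcl(K,b)$, $tp(e/K)$ would be isolated. But $tp(e/K)$ is the $\Pi$-generic type over $K$, which is not isolated: every $K$-formula in it contains a nonempty $\Pi$-open set $\{g\neq 0\}$, and such sets contain non-generic points. Hence $|\D|<m$.

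A smaller point concerns the converse of the moreover clause. The paper's proof only argues that a trivial class yields a log-equation, and your attempt at the other direction needs one more step. You show that the coset $c\,(G,t)^\#$ has class in $\ker\iota^1$, but the claim is about the class of $(V,s)^\#$. The $K$-definable bijection $F$ from $(V,s)^\#$ onto the realisations of $tp(c/K)$, coming from $K\langle a\rangle_\Pi=K\langle c\rangle_\Pi$, need not respect the actions.

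Weak orthogonality only gives $F(v*g)=F(v)\,\theta(g)$, where $\theta(g)=F(v)^{-1}F(v*g)$ is a $K$-definable injective homomorphism $(G,t)^\#\to(G,t)^\#$ independent of $v$. Its image $H'$ is a $K$-definable subgroup definably isomorphic to the connected group $(G,t)^\#$, so by Morley rank and degree $H'=(G,t)^\#$. This is what justifies your claim that the realisations of $tp(c/K)$ fill the coset. However, $\theta$ itself may be a nontrivial automorphism, e.g.\ inversion on a commutative group.

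To finish, use that the $\#$-functor in Theorem~\ref{thm: equiv of cats}(1) is full. Then $\theta$ extends to a $K$-automorphism $\tilde\theta$ of $(G,t)$. The map $\tilde\theta^{-1}\circ F$ is an equivariant $K$-isomorphism of $(V,s)^\#$ onto the $K$-definable coset $\tilde\theta^{-1}(c)\,(G,t)^\#$, which gives the conclusion.
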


\begin{proof} \quad 
\begin{enumerate}
    \item Part (1) is exactly Lemma \ref{lem: part a}.
    
    \item Let $L = K\l b \r_\Pi$ and let $Q$ be the realizations of $q:=tp_\Pi(b/K)$. By the Torsor Theorem (see Fact~\ref{fact: torsor theorem}), the Galois group of $L/K$ is isomorphic to a connected $\Pi$-algebraic group $H$ over $K$ and $Q$ is a $\Pi$-torsor for $H$ over $K$. Moreover, since $q$ is isolated (see Fact~\ref{fact: torsor theorem}(1)), the $\Pi$-type of $Q$ is smaller than $|\Pi|$. Hence, by Fact~\ref{Koltype}, after a $GL_m(\mathbb Q)$-transformation of $\Pi$, there exists $\D$ a proper subset of $\Pi$ witnessing the $\Pi$-type of $Q$ (which is the same as the $\Pi$-type of $L/K$). Let $\DD=\Pi\setminus \D$. Now, by Theorem~\ref{thm: equiv of cats}, $Q$ is of the form $(V,s)^\#$ for some parameterized $\DD/\D$-torsor $(V,s)\circlearrowright (G,t)$ with $(G,t)^\#=H$. Since $b\in (V,s)^\#$, $L$ is clearly a differential Galois extension for $(V,s)$; and, in fact, $L$ is the $\Pi$-function field of $(V,s)^\#$ since $b$ is $\Pi$-generic in $(V,s)^\#$ over $K$.

    We now prove the 'moreover' clause. By Theorem~\ref{thm: cohomological fact}, the image of the class of $(V,s)^\#$ under the map $\Phi$ is trivial if and only if $(V,s)^\#$ is $\Pi$-isomorphic over $K$ to a $K$-definable left coset of $(G,t)^\#$ in $G$. Under this isomorphism, let $a$ be the point in $G$ corresponding to $b$. Then, $(V,s)^\#\cong a\cdot (G,t)^\#$ and $L=K\langle a\rangle_\Pi$.

    Since $a\cdot (G,t)^\#$ is $K$-definable, the image of this coset under the $K$-definable map
    $$\nabla_\DD^G(x)\cdot t(x)^{-1}:G\to \tau_\D G_e$$ 
    must be a $K$-point, call it $\alpha$. Since $a$ is a solution of $\nabla_\DD^G(x) = \alpha \cdot t(x)$, we see that $L$ is a Galois extension of this $\alpha$-log differential equation on $(G,t)$. 

\end{enumerate}
\end{proof}


\begin{thebibliography}{10}

%\nocite{*}


\bibitem{CaSi}
P. Cassidy and M. Singer.
\newblock Galois theory of parametrized differential equations and linear differential algebraic groups.
\newblock Differential Equations and Quantum Groups (IRMA Lectures in Mathematics and Theoretical Physics Vol. 9). EMS Publishing house, pp.113- 157, 2006.


\bibitem{ChPi}
Z. Chatzidakis and A. Pillay.
\newblock Generalized Picard--Vessiot extensions and differential Galois cohomology.
\newblock Annales de la Faculté des sciences de Toulouse : Mathématiques (6), 
Vol.~28, no.~5, pp.~813--830, 2019.


\bibitem{GiGoOv} 
H. Gillet, S. Gorchinskiy, and A. Ovchinnikov, 
\newblock Parametrized Picard-Vessiot extensions and Atiyah extensions.
\newblock Advances in Math., 238 (2013), 322-411.

\bibitem{Hungerford}
T. W. Hungerford.
\newblock Algebra. 
\newblock Graduate texts in Mathematics 73, Springer, 1974.

\bibitem{JuanLedet}
L. Juan and A. Ledet.
\newblock Equivariant vector fields on nontrivial $SO_n$-torsors and differential Galois theory.
\newblock Journal of Algebra, 312(2):734--745, 2007.

%\bibitem{MoshePillay} 
%M. Kamensky and A. Pillay.
%\newblock Interpretations and differential Galois extensions.
%\newblock International Mathematics Research Notices, 2016(24):7390--7413,  2016.

\bibitem{Kolchin1}
E. R. Kolchin.
\newblock Differential algebra and algebraic groups.
\newblock Academic Press. New York, New York 1973.

\bibitem{Kolchin2}
E. R. Kolchin.
\newblock Differential algebraic groups.
\newblock Academic Press, Inc. 1985.

\bibitem{Landesman}
P. Landesman.
\newblock Generalized differential Galois theory.
\newblock Trans. Amer. Math.Soc. 360, pp.4441-4495, 2008.

\bibitem{LS1}
O. Le\'on S\'anchez.
\newblock Relative $D$-groups and differential Galois theory in several derivations.
\newblock Trans. Amer. Math, Soc., 367:7613--7638, 2015.

\bibitem{LSMP}
O. Le\'on S\'anchez, D. Meretzky and A. Pillay.
\newblock More on Galois cohomology, definability and differential algebraic groups.
\newblock The Journal of Symbolic Logic, 89(2):496--515, 2024.

\bibitem{LSPillay}
O. Le\'on S\'anchez and A. Pillay.
\newblock Some definable Galois theory and examples. 
\newblock The Bulletin of Symbolic Logic, 23(2):145--159, 2017.

%\bibitem{Ma}
%D. Marker.
%\newblock Embedding differential algebraic groups in algebraic groups.
%\newblock www.sci.ccny.cuny.edu/~ksda/PostedPapers/Marker050109.pdf

%\bibitem{Ma2}
%D. Marker.
%\newblock Model Theory: An Introduction.
%\newblock Springer-Verlag. New York, Inc, 2002. 

%\bibitem{Mc}
%T. McGrail.
%\newblock The model theory of differential fields with finitely many commuting derivations.
%\newblock The Journal of Symbolic Logic. Vol. 65, No. 2, pp.885-913, 2000.

\bibitem{Me}
D. Meretzky.
\newblock The short exact sequence in definable Galois cohomology. 
\newblock The Journal of Symbolic Logic. Published online 2025:1-15. doi:10.1017/jsl.2025.8


\bibitem{Me2}
D. Meretzky.
\newblock Differential Field Arithmetic.
\newblock University of Notre Dame. Thesis. https://doi.org/10.7274/28786154.v1

\bibitem{Pillay1}
A. Pillay.
\newblock Differential Galois theory I.
\newblock Illinois J. Math., 42(4):678--699, 1998.

\bibitem{Pillay2}
A. Pillay.
\newblock Algebraic D-groups and differential Galois theory. 
\newblock Pacific Journal of Mathematics. Vol. 216, No. 2, 2004.

\bibitem{Pillay3}
A. Pillay.
\newblock Remarks on Galois cohomology and definability.
\newblock The Journal of Symbolic Logic. Vol. 62, No. 2, pp. 487-492, 1997.

\bibitem{Pillay4}
A. Pillay.
\newblock Some foundational questions concerning differential algebraic groups.
\newblock Pacific Journal of Mathematics. Vol. 179, No.1, 1997.

\bibitem{Pillay5}
A. Pillay.
\newblock The Picard-Vessiot theory, constrained cohomology, and linear differential algebraic groups.
\newblock Journal de Math\'ematiques Pures et Appliqu\'ees. Vol. 108, pp.809-817, 2017.

\bibitem{PSokolovic}
A. Pillay and Z. Sokolovic. 
\newblock Superstable differential fields.
\newblock The Journal of Symbolic Logic. 57(1):97--108, 1992. 

%\bibitem{Po}
%B. Poizat.
%\newblock Stable Groups.
%\newblock Mathematical Surveys and Monographs Vol. 87. American Mathematical Society 2001.


%\bibitem{Seidenberg} 
%A. Seidenberg, 
%\newblock Contribution to the Picard-Vessiot theory of homogeneous linear differential equations.
%\newblock Amer. J. Math. 78 (1956), 808-817.

%\bibitem{So}
%S. Suer.
%\newblock Model theory of differentially closed fields with several commuting derivations.
%\newblock PhD Thesis. University of Illinois at Urbana-Champaign, 2007.

\bibitem{PuSi}
M. van der Put and M. Singer.
\newblock Galois theory of linear differential equations.
\newblock Springer-Verlag Berlin Heidelberg 2003






\end{thebibliography}
\end{document}